\tikzset{
  closed/.style = {decoration = {markings, mark = at position 0.5 with { \node[transform shape, xscale = .8, yscale=.4] {/}; } }, postaction = {decorate} },
  open/.style = {decoration = {markings, mark = at position 0.5 with { \node[transform shape, scale = .7] {$\circ$}; } }, postaction = {decorate} }
}
\newcommand{\mZ}{\mathbb{Z}}
\newcommand{\mQ}{\mathbb{Q}}
\newcommand{\mC}{\mathbb{C}}
\newcommand{\mG}{\mathbb{G}}
\newcommand{\mA}{\mathbb{A}}
\newcommand{\mP}{\mathbb{P}}
\newcommand{\wt}[1]{\widetilde{#1}}
\DeclareSymbolFont{cyrletters}{OT2}{wncyr}{m}{n}
\DeclareMathSymbol{\Sha}{\mathalpha}{cyrletters}{"58}
\DeclareMathSymbol{\Sha}{\mathalpha}{cyrletters}{"58}
\newcommand{\brk}[1]{ \left\lbrace #1 \right\rbrace }
\newcommand{\sB}{{\mathscr B}}
\newcommand{\sC}{{\mathscr C}}
\newcommand{\sD}{{\mathscr D}}
\newcommand{\sE}{{\mathscr E}}
\newcommand{\sG}{{\mathscr G}}
\newcommand{\sI}{{\mathscr I}}
\newcommand{\sJ}{{\mathscr J}}
\newcommand{\sL}{{\mathscr L}}
\newcommand{\sM}{{\mathscr M}}
\newcommand{\sO}{{\mathscr O}}
\newcommand{\sS}{{\mathscr{S}}}
\newcommand{\sT}{{\mathscr T}}
\newcommand{\sU}{{\mathscr U}}
\newcommand{\sV}{{\mathscr V}}
\newcommand{\sW}{{\mathscr W}}
\newcommand{\sX}{{\mathscr X}}
\newcommand{\sY}{{\mathscr Y}}
\newcommand{\sZ}{{\mathscr Z}}
\numberwithin{equation}{subsection}
\newtheorem{thmx}{Theorem}
\newtheorem{corox}[thmx]{Corollary}
\numberwithin{equation}{subsection}
\newtheorem{theorem}[subsection]{Theorem}
\newtheorem{lemma}[subsection]{Lemma}
\newtheorem{corollary}[subsection]{Corollary}
\newtheorem{conjecture}[subsection]{Conjecture}
\newtheorem{prop}[subsection]{Proposition}
\theoremstyle{definition}
\newtheorem{definition}[subsection]{Definition}
\newtheorem{example}[subsection]{Example}
\theoremstyle{remark}
\newtheorem{remark}[subsection]{Remark}
\numberwithin{equation}{section} \numberwithin{figure}{section}
\DeclareMathOperator{\an}{an}
\DeclareMathOperator{\red}{red}
\DeclareMathOperator{\Ber}{Ber}
\DeclareMathOperator{\alg}{alg}
\DeclareMathOperator{\topo}{top}
\DeclareMathOperator{\Pic}{Pic} 
 \DeclareMathOperator{\Spec}{Spec}
\DeclareMathOperator{\Spa}{Spa}
\DeclareMathOperator{\Int}{Int}
\DeclareMathOperator{\Stab}{Stab}
\DeclareMathOperator{\Sym}{Sym}
\DeclareMathOperator{\pr}{pr}
\DeclareMathOperator{\Sp}{Sp}
\DeclareMathOperator{\ett}{\acute{e}t}
\DeclareMathOperator{\temp}{temp}
\newcommand{\tr}{\mathrm{tr}}
\newcommand{\cdef}[1]{\textsf{\textit{#1}}}
\newcommand\fm{\mathfrak{f}}
\renewcommand{\leq}{\leqslant}
\renewcommand{\geq}{\geqslant}
\DeclareMathOperator{\QAlb}{QAlb}
\DeclareMathOperator{\codim}{codim}
\begin{document}

\title{A non-Archimedean analogue of Campana's notion of specialness}	

\author{Jackson S. Morrow}
\address{Jackson S. Morrow \\
	Centre de Recherche de Math\'ematiques, Universit\'e de Montr\'eal\\
    Montr\'eal, Qu\'ebec H3T 1J4\\
    CAN}
\email{jmorrow4692@gmail.com}
\urladdr{\url{https://sites.google.com/site/jacksonsalvatoremorrow/}} 

\author{Giovanni Rosso}
\address{Giovanni Rosso \\
	Concordia University \\
	Department of Mathematics and Statistics \\ 
	Montr\'eal, Qu\'ebec H3G 1M8 \\
	CAN
}
\email{giovanni.rosso@concordia.ca}
\urladdr{\url{https://sites.google.com/site/gvnros/}}

\subjclass
{14G22 (    
32Q45,  
14G05,  
26E30)}	

\keywords{Hyperbolicity, non-Archimedean geometry, Lang’s conjectures, Campana’s conjectures,  special varieties, fundamental groups}
\date{\today}

\begin{abstract}
Let $K$ be an algebraically closed, complete, non-Archimedean valued field of characteristic zero, and let $\mathscr{X}$ be a $K$-analytic space (in the sense of  Huber). 
In this work, we pursue a non-Archimedean characterization of Campana's notion of specialness. 
We say $\mathscr{X}$ is $K$-analytically special if there exists a connected, finite type algebraic group $G/K$, a dense open subset $\mathscr{U}\subset G^{\text{an}}$ with $\codim(G^{\text{an}}\setminus \mathscr{U}) \geq 2$, and an analytic morphism $\mathscr{U} \to \mathscr{X}$ which is Zariski dense.

With this definition, we prove several results which illustrate that this definition correctly captures Campana's notion of specialness in the non-Archimedean setting. 
These results inspire us to make non-Archimedean counterparts to conjectures of Campana. 
As preparation for our proofs, we prove auxiliary results concerning the indeterminacy locus of a meromorphic mapping between $K$-analytic spaces, the notion of pseudo-$K$-analytically Brody hyperbolic, and extensions of meromorphic maps from smooth, irreducible  $K$-analytic spaces to the analytification of a semi-abelian variety.
\end{abstract}
\maketitle

\section{\bf Introduction}
\label{sec:intro}

The study of rational points on varieties over number fields is one of the fundamental questions in arithmetic geometry. 
For curves, we have a good understanding of the behaviour of the rational points, and this behaviour is governed by the genus of the curve. 
If the curve has genus $g\leq 1$, then the rational points on the curve become infinite after a finite extension of the base number field; in this case, we say the curve is arithmetically special. 
If the curve has genus $g\geq 2$, then a famous result of Faltings \cite{Faltings2} asserts that the rational points on the curve are finite over any number field, and in this case, we say the curve is arithmetically hyperbolic. 
It is natural to ask if a similar dichotomy holds for higher dimensional varieties. 

The deep and contrasting conjectures of Lang \cite[p.~161--162]{Lang} and Campana \cite[Section 9]{campana2004orbifolds} posit that such a relationship holds for certain classes of varieties. 
Lang's conjecture asserts that a variety of general type over a number field is pseudo-arithmetically hyperbolic (\cite[Definition 7.2]{JBook}), and Campana's conjecture claims that a special variety (\autoref{def:specialBogomolov}) over a number field is arithmetically special (\autoref{defn:arithmeticallyspecial}).
There has been significant progress on both of these conjectures (see e.g., \cite{FaltingsLang1, FaltingsLang2, Vojta11,Vojta:IntegralPointsII} and \cite{HarrisTsch, HassetTschinkel:AbelianFibrations, bogomolov2000density} and the excellent book \cite{nicole2020arithmetic} for more references). 
Since understanding the arithmetic properties of such varieties is difficult, we seek other ways to describe being of general type and special.
There exist (conjectural) complex analytic characterizations of these notions (see e.g., \cite{Lang, Kobayashi} and \cite{campana2004orbifolds}), and recently, there has been work on providing a (conjectural) non-Archimedean characterization of general type (see e.g., \cite{Cherry, CherryKoba, JVez, MorrowNonArchGGLV, sun:NonArchBorelhyperbolic}).

\subsection*{Main contributions}
In this work, we offer a non-Archimedean interpretation of Campana's notion of specialness with the desideratum that our interpretation is equivalent to other characterizations of specialness.  
Our definition is motivated by the (conjectural) complex analytic analogue of specialness, which goes by the name Brody special (\autoref{defn:Brodyspecial}) and states that a complex manifold is Brody special if there exists a dense entire map from $\mC$ to it. For example, the complex analytification of an abelian variety is Brody special by the Riemann uniformization theorem.

A natural first guess for a non-Archimedean analogue of this notion would be to ask for a dense analytic morphism from non-Archimedean analytification of $\mA^1$ or $\mG_m$ into our analytic space. However, results of Cherry \cite{Cherry} tell us that is definition will not suffice. For example, the non-Archimedean analytification of an abelian variety with good reduction will not admit any non-constant morphism from the these spaces, and since these analytic spaces should be special we see that this naive notion does not suffice.

Instead of testing specialness on non-Archimedean entire curves, we will test on big analytic opens of the non-Archimedean analytification of connected algebraic groups. 
To set notation for our definition, let $K$ be an algebraically closed, complete, non-Archimedean valued field of characteristic zero, let $\mathscr{X}$ be a $K$-analytic space (in the sense of Huber), and let $X^{\an}$ denote the non-Archimedean analytification of a variety $X$ over $K$. 

\begin{definition}
We say that $\sX$ is \cdef{$K$-analytically special} if there exists a connected, finite type algebraic group $G/K$, a dense open subset $\sU\subset G^{\an}$ with $\codim(G^{\an}\setminus \sU) \geq 2$, and an analytic morphism $\sU \to \sX$ which is Zariski dense. 
\end{definition}

We highlight two points in the above definition. 
First, it may seem unnatural to test whether a $K$-analytic space is $K$-analytically special by asking for the existence of a Zariski dense analytic morphism a big analytic open of an algebraic group, however we provide explanations for these conditions in \autoref{exam:whybig} and \autoref{remark:whybiganalytic}. 
The second is that we do not require our $K$-analytic space to be compact. This allows us to avoid the language of logarithmic geometry and orbifolds when working with non-compact $K$-analytic space, which eliminates some technical difficulties.

We prove several results to illustrate that our definition correctly captures Campana's notion of specialness (\autoref{def:specialBogomolov} and \autoref{def:specialfibration}). 
Our first result states that a  $K$-analytically special $K$-analytic space cannot dominate a positive dimensional pseudo-$K$-analytically Brody hyperbolic $K$-analytic space, which can be viewed as a non-Archimedean version of \cite[Proposition 9.27]{campana2004orbifolds}. 
We refer the reader to Section \ref{sec:Brodyhyperbolic} for the definition and a discussion of the notion of pseudo-$K$-analytically Brody hyperbolic. 

\begin{thmx}\label{thm:nofibrationtoBrody}
Let $K$ be an algebraically closed, complete, non-Archimedean valued field of characteristic zero, and let $\sX,\sY$ be irreducible, reduced, separated $K$-analytic spaces. 
If $\sY$ is a  $K$-analytically special $K$-analytic space and $\sX$ is a positive dimensional pseudo-$K$-analytically Brody hyperbolic $K$-analytic space (\autoref{defn:KBrodyMod}), then there is no dominant morphism $\sY \to \sX$.
\end{thmx}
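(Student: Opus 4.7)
The plan is to argue by contradiction. Assume a dominant morphism $h\colon \sY \to \sX$ exists. By the $K$-analytic specialness of $\sY$, fix a connected finite-type algebraic group $G/K$, a big open $\sU \subseteq G^{\an}$ with $\codim(G^{\an}\setminus \sU) \geq 2$, and a Zariski dense analytic morphism $\phi\colon \sU \to \sY$. Composing gives $g := h\circ \phi \colon \sU \to \sX$, which is still Zariski dense since $\phi$ is Zariski dense and $h$ is dominant. Let $\Delta \subsetneq \sX$ denote the proper Zariski closed exceptional locus witnessing pseudo-$K$-analytically Brody hyperbolicity of $\sX$. Zariski density forces $g(\sU) \not\subseteq \Delta$, so one can choose $q \in \sU$ with $g(q) \notin \Delta$.

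The strategy is now to manufacture, from $g$, a non-constant analytic morphism from the analytification of a connected algebraic group to $\sX$ whose image is not contained in $\Delta$, which would directly contradict pseudo-$K$-analytically Brody hyperbolicity. To this end, I would exploit the codimension-$2$ hypothesis by a dimension count: for a sufficiently generic $1$-dimensional connected algebraic subgroup $H \leq G$, the translate $qH$ satisfies $(qH)^{\an}\cap (G^{\an}\setminus \sU)=\emptyset$, since the expected dimension of the intersection is $1+(\dim G -2)-\dim G \leq -1$. Thus $(qH)^{\an}\subseteq \sU$, and left translation identifies $(qH)^{\an}$ with $H^{\an}$, yielding an analytic morphism $H^{\an}\to\sX$ from the analytification of a $1$-dimensional connected algebraic group (so $\Ga$, $\Gm$, or an elliptic curve).

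It remains to ensure that for some such $H$ the restriction is non-constant. If instead $g$ were constant on every $1$-dimensional algebraic translate through $q$ contained in $\sU$, then since such translates cover a neighborhood of $q$ in $G^{\an}$, the map $g$ would be locally constant near $q$, contradicting its Zariski density in $\sX$. For an $H$ producing a non-constant restriction, pseudo-$K$-analytically Brody hyperbolicity of $\sX$ forces the image of the induced morphism $H^{\an}\to\sX$ to lie in $\Delta$; but $g(q)\notin \Delta$ lies in this image, the desired contradiction.

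The main technical obstacle is guaranteeing a suitable $1$-dimensional algebraic subgroup of $G$ through $q$ on which $g$ restricts non-trivially. When $G$ contains a nontrivial connected linear subgroup there is an abundance of $\Ga$'s and $\Gm$'s available through $q$, and the dimension count above produces the required translate inside $\sU$. In the residual case where $G$ has no $1$-dimensional algebraic subgroups (essentially when $G$ is a simple abelian variety with no elliptic subvariety), one instead extends $g$ across the codimension-$2$ locus $G^{\an}\setminus \sU$ to a morphism $\tilde{g}\colon G^{\an}\to\sX$ using the Hartogs-type extension and indeterminacy-locus results for meromorphic mappings between $K$-analytic spaces developed in the preliminary sections of the paper, and then applies pseudo-$K$-analytically Brody hyperbolicity directly to $\tilde{g}$ to contradict Zariski density of its image.
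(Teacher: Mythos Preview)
Your first step---composing to obtain a Zariski dense $g\colon \sU\to\sX$ and noting $g(\sU)\not\subset\Delta$---matches the paper. After that the approaches diverge, and yours has genuine gaps.

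The central problem is the one-dimensional slicing. The complement $G^{\an}\setminus\sU$ is only a closed subset of the analytic space, not an algebraic subvariety, so no Bertini-type argument guarantees that a generic algebraic translate $qH$ misses it. More seriously, the family of one-dimensional connected algebraic subgroups of $G$ is typically far too thin for any genericity argument: in $\Gm^{2}$ these subgroups are indexed by primitive integer vectors, hence form a countable family, and in a simple abelian variety with no elliptic subvariety there are none at all. For the same reason such translates never cover an analytic neighbourhood of $q$, so constancy of $g$ on each of them would not force $g$ to be locally constant near $q$. Your residual case also misfires: the meromorphic-extension result in the preliminaries (\autoref{prop:meromorphicmapadic}) requires the \emph{target} to be proper, which $\sX$ is not assumed to be. (Ironically, in that residual case $G$ is already abelian, so \autoref{defn:KBrodyMod} applies directly to $g\colon\sU\to\sX$ and no extension is needed; that case is actually immediate.)

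The paper's route is to first establish a separate equivalence (\autoref{thm:equivalentBrody}): $\sX$ is $K$-analytically Brody hyperbolic modulo $\sD$ if and only if every non-constant map from a big open of \emph{any} connected algebraic group factors through $\sD$. Once this is available, the present theorem is a one-line contradiction. The substance lies in \autoref{thm:equivalentBrody}, proved by induction on $\dim G$ via Chevalley's decomposition: for affine $G$ one quotients by proper connected closed subgroups $H$, shows via a factorization lemma (\autoref{lemma:extendmap}) that the map descends to each $G^{\an}/H^{\an}$, and concludes that $\varphi^{*}(\sM(\sX))\subset\bigcap_{H}\sM(G^{\an})^{H^{\an}}=K$; the general case then fibres over the abelian quotient and appeals to the second clause of \autoref{defn:KBrodyMod}. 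This inductive quotient argument is what replaces your attempted one-dimensional slicing.
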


Using \autoref{thm:nofibrationtoBrody}, we are able to identify several classes of $K$-analytically special $K$-analytic spaces in terms of their intrinsic geometry and certain abelian properties of their fundamental group. 
We recall that our guiding principle is that the notions of specialness and being of general type contrast each other.

In \cite{MorrowNonArchGGLV}, the first author proved that a closed subvariety $X$ of a semi-abelian variety over $K$ is of logarithmic general type if and only if it is pseudo-$K$-analytically Brody hyperbolic. This result builds off of the works \cite{Abram, Nogu}, where the authors show that the first condition is equivalent to the special locus of $X$ being properly contained in $X$, which essentially means that $X$ is not the translate of a semi-abelian subvariety. 
With our guiding principle in mind, we use results of Vojta and \autoref{thm:nofibrationtoBrody} to prove that $X$ being a translate of a semi-abelian subvariety is equivalent to $X^{\an}$ being $K$-analytically special. 

\begin{thmx}\label{thm:closedabelianspecial}
Let $K$ be an algebraically closed, complete, non-Archimedean valued field of characteristic zero. 
Let $G/K$ be a semi-abelian variety, and let $X\subset G$ be a closed subvariety. 
Then, $X$ is the translate of a semi-abelian subvariety if and only if $X^{\an}$ is $K$-analytically special. 
\end{thmx}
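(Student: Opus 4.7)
The plan is to prove the two implications separately, with the reverse implication being the substantive one. For the forward direction, suppose $X = g + H$ for some $g \in G(K)$ and some semi-abelian subvariety $H \subset G$. Translation by $g$ gives an isomorphism of $K$-analytic spaces $H^{\an} \to X^{\an}$. Taking the connected algebraic group to be $H$ itself, the dense open subset $\sU = H^{\an}$ (whose complement is empty and therefore has codimension $\geq 2$ vacuously), and the above isomorphism as the required Zariski dense analytic morphism, one verifies directly from the definition that $X^{\an}$ is $K$-analytically special.

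For the reverse direction, the strategy is to combine \autoref{thm:nofibrationtoBrody} with Kawamata's structure theorem for closed subvarieties of semi-abelian varieties (as developed by Abramovich, Noguchi, Vojta, and Winkelmann) and the main result of \cite{MorrowNonArchGGLV}. Kawamata's structure theorem produces, for any closed subvariety $X \subset G$, a semi-abelian subvariety $H \subset G$ and a surjective morphism $X \to X'$ onto a closed subvariety $X' \subset G/H$ whose fibres are translates of $H$ and for which $X'$ is either a point or of log general type; moreover, $X$ is itself a translate of a semi-abelian subvariety precisely when $X'$ is a point. Now assume $X^{\an}$ is $K$-analytically special and, arguing by contradiction, suppose $X$ is not a translate of a semi-abelian subvariety. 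Then by the structure theorem $\dim X' > 0$ and $X'$ is of log general type, so by \cite{MorrowNonArchGGLV} the analytic space $X'^{\an}$ is pseudo-$K$-analytically Brody hyperbolic. The analytification of $X \to X'$ is a dominant analytic morphism from the $K$-analytically special space $X^{\an}$ to the positive-dimensional pseudo-$K$-analytically Brody hyperbolic space $X'^{\an}$, contradicting \autoref{thm:nofibrationtoBrody}. Hence $X'$ is a point and $X$ is a translate of a semi-abelian subvariety.

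The main obstacle will be citing Kawamata's structure theorem in exactly the required form for closed subvarieties of \emph{semi-abelian} (rather than only abelian) varieties, and confirming that the image $X' \subset G/H$ is embedded so that \cite{MorrowNonArchGGLV} applies to identify $X'^{\an}$ as pseudo-$K$-analytically Brody hyperbolic. Some care is additionally needed to verify the irreducibility, reducedness, and separatedness hypotheses demanded by \autoref{thm:nofibrationtoBrody} for both $X^{\an}$ and $X'^{\an}$, but these follow from the standing assumption that $X$ is an irreducible closed subvariety of $G$ together with the fact that $X'$ is the image of $X$ under an algebraic morphism.
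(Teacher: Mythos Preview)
Your approach is essentially the same as the paper's: the Kawamata structure theorem you invoke is precisely the Ueno fibration $X \to X/B(X,G)$ (with $H = B(X,G)$ the identity component of the stabilizer), Vojta's \autoref{thm:KawamataUenofibration} supplies the log general type statement for the base, and the contradiction via \autoref{thm:nofibrationtoBrody} is exactly how the paper concludes.

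There is, however, one genuine point you gloss over. When you write ``by \cite{MorrowNonArchGGLV} the analytic space $X'^{\an}$ is pseudo-$K$-analytically Brody hyperbolic'' and then feed this into \autoref{thm:nofibrationtoBrody}, you are silently assuming compatibility between two different definitions. The result in \cite{MorrowNonArchGGLV} is proved for the notion that tests on \emph{algebraic} maps from big \emph{algebraic} opens of abelian varieties, whereas \autoref{thm:nofibrationtoBrody} requires the strictly stronger notion of \autoref{defn:KBrodyMod}, which tests on \emph{analytic} maps from big \emph{analytic} opens. Bridging this is not automatic: the paper devotes all of Subsection~\ref{appendix} to it, proving a non-Archimedean extension theorem (\autoref{thm:extendanalyticmorphism}) for meromorphic maps into semi-abelian varieties and using it (\autoref{prop:Brodyequivalence}) to show the two notions agree for closed subvarieties of semi-abelian varieties. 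Your list of obstacles should include this compatibility step; the embedding of $X'$ into $G/H$ is comparatively routine.
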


We point out that the analogous result for Brody special (i.e., for complex analytic varieties) holds due to work of  \cite{Abram, Nogu, campana2004orbifolds}. 
We also note that results of Iitaka \cite[Theorems 2 \& 4]{IitakaLogForms} (see also Vojta's result in \cite[Theorem 5.15]{Vojta:IntegralPointsII}) and Campana \cite[Theorem 5.1]{campana2004orbifolds} imply that if $X$ is a translate of a semi-abelian variety, then $X$ is a special variety (see \autoref{def:specialfibrationlog} for the notion of special non-proper variety). The techniques of the proof of \autoref{thm:closedabelianspecial} can easily be adapted to the algebraic case, thus giving us the opposite implication of this result, see \autoref{thm:specialsemiabelian}.
We record such a result as an immediate corollary to \autoref{thm:closedabelianspecial}. 

\begin{corox}\label{coro:specialabelian}
Let $K$ be an algebraically closed, complete, non-Archimedean valued field of characteristic zero. 
Let $G/K$ be a semi-abelian variety, and let $X\subset G$ be a closed subvariety. 
Then, $X$ is special if and only if $X^{\an}$ is $K$-analytically special. 
\end{corox}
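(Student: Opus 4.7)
The plan is to derive the corollary by chaining \autoref{thm:closedabelianspecial} with an algebraic characterization of special subvarieties of semi-abelian varieties. Explicitly, \autoref{thm:closedabelianspecial} already supplies the equivalence
\[
X^{\an} \text{ is } K\text{-analytically special} \iff X \text{ is a translate of a semi-abelian subvariety of } G,
\]
so it suffices to establish the purely algebraic equivalence
\[
X \text{ is special} \iff X \text{ is a translate of a semi-abelian subvariety of } G.
\]

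For the direction ``translate implies special'', I would invoke the classical results of Iitaka \cite[Theorems 2 \& 4]{IitakaLogForms}, Vojta \cite[Theorem 5.15]{Vojta:IntegralPointsII}, and Campana \cite[Theorem 5.1]{campana2004orbifolds} that are already cited in the introduction: a translate of a semi-abelian subvariety is isomorphic to a semi-abelian variety, and semi-abelian varieties are known to be special in the sense of Campana. For the opposite direction ``special implies translate'', I would appeal to the paper's own algebraic analogue (the forthcoming Theorem~5.15, obtained by adapting the proof of \autoref{thm:closedabelianspecial}), which asserts that if $X \subset G$ is not a translate of a semi-abelian subvariety, then the structure theory of subvarieties of semi-abelian varieties (Ueno--Kawamata fibration, combined with the Abramovich--Noguchi characterization of log-general type already used in the proof of \autoref{thm:closedabelianspecial}) produces a dominant morphism from $X$ onto a positive-dimensional variety of log-general type, directly obstructing specialness.

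Combining these two biconditionals yields the desired chain
\[
X \text{ is special} \iff X \text{ is a translate of a semi-abelian subvariety} \iff X^{\an} \text{ is } K\text{-analytically special},
\]
proving the corollary. The main obstacle is not substantive but bibliographic and organizational: one must ensure that the algebraic ``special iff translate'' equivalence is correctly isolated and cited, since both implications are already in hand (one from Iitaka--Vojta--Campana, the other from the forthcoming algebraic analogue of \autoref{thm:closedabelianspecial}). Once that equivalence is in place, the corollary is immediate and requires no further analytic input beyond \autoref{thm:closedabelianspecial}.
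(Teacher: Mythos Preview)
Your proposal is correct and matches the paper's own proof essentially verbatim: the paper deduces the corollary immediately from \autoref{thm:closedabelianspecial} together with its algebraic analogue \autoref{thm:specialsemiabelian} (the ``special iff translate'' equivalence), which is precisely the chain of biconditionals you describe. Your identification of the ingredients for each direction of the algebraic equivalence (Iitaka--Vojta--Campana for one, the Ueno fibration argument for the other) is also accurate.
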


Our final main result is related to the abelianity conjecture of Campana \cite[Conjecture 7.1]{campana2004orbifolds}. 
This conjecture postulates that for a variety $X/\mC$, being special is equivalent to the topological fundamental group of the complex analytification of $X$ being virtually abelian (i.e., it contains a finite index abelian subgroup). 
For results concerning this abelianity conjecture, we refer the reader to \cite[Theorem 7.8]{campana2004orbifolds}, \cite[Theorem 1.1]{yamanoi2010fundamental}, and \cite[Theorem 1.12]{javanpeykar2020albanese}.

The development of fundamental groups for a non-Archimedean analytic space has a rich history, which we briefly exposit. 
Berkovich $K$-analytic spaces posses nice topological properties. 
For example, an important result of Berkovich \cite[Corollary 9.6]{BerkovichUniversalCover} says that a smooth, connected Berkovich $K$-analytic space admits a topological universal covering which is a simply connected $K$-analytic space, and hence one can describe the topological fundamental group of a Berkovich $K$-analytic space via loops modulo homotopy. 
This illustrates that Berkovich $K$-analytic spaces have similarities to complex manifolds, however we note that the topological fundamental group does not detect interesting arithmetic properties of a variety; indeed, the topological fundamental group of the analytification of any variety over $K$ with good reduction is trivial.

While there are too few topological coverings, working directly with the \'etale fundamental group of a Berkovich $K$-analytic space is unwieldly (see e.g., \cite[Proposition 7.4]{DeJongFundamentalGroupNonArch}).  
To remedy this, Andr\'e \cite{andre-per} introduced the tempered fundamental group of a Berkovich $K$-analytic space, which sits between the topological and \'etale fundamental groups and provides us with the correct fundamental group to study non-Archimedean analogues of Campana's abelianity conjectures. 

As a first step in this direction, we prove that the Berkovich analytification of a projective surface with negative Kodaira dimension is $K$-analytically special if and only if its tempered fundamental group is virtually abelian; see \cite[Theorem 3.1]{buzzard2000algebraic} for the analogous complex analytic statement. 

\begin{thmx}\label{thm:surfaceKinfinity}
Let $K$ be an algebraically closed, complete, non-Archimedean valued field of characteristic zero, and let $X/K$ is a smooth, projective surface with negative Kodaira dimension. 
Then the following are equivalent:
\begin{enumerate}
\item $X$ has irregularity $q(X) = h^0(X,\Omega_X^1)$ less than 2;
\item $X^{\an}$ is $K$-analytically special;
\item the tempered fundamental group $\pi_1^{\temp}(X^{\an})$ of $X^{\an}$ is virtually abelian.
\end{enumerate}
\end{thmx}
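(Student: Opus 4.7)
The plan is to leverage the Enriques--Kodaira classification of surfaces to reduce to the birational model $X \sim_{\mathrm{bir}} \mP^1 \times C$, where $C$ is a smooth projective curve with $g(C) = q(X)$ (taking the convention $C = \mP^1$ when $q(X) = 0$). Each of the three equivalences will then be established by tracking the geometry and topology of $X$ against those of $C$, with the dichotomy $g(C) \leq 1$ versus $g(C) \geq 2$ doing the essential work.

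For $(1) \Rightarrow (2)$, I would assume $q(X) \leq 1$ and set $G = \Ga \times A$, where $A = \Ga$ if $q(X) = 0$ and $A = C$ (viewed as an elliptic curve, hence a connected algebraic group) if $q(X) = 1$. In either case $G$ is a connected, finite type algebraic group with a natural open immersion $G \hookrightarrow \mP^1 \times C$. Composing this with the birational map $\mP^1 \times C \dashrightarrow X$ furnished by the classification yields an analytic morphism defined away from the indeterminacy locus; since that locus is a finite set of points in the smooth projective surface $\mP^1 \times C$, its preimage in $G^{\an}$ is finite, and the resulting $\mathcal{U} := G^{\an} \setminus \{\text{finitely many points}\}$ is a big open with $\codim(G^{\an}\setminus \mathcal{U}) \geq 2$. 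The induced morphism $\mathcal{U} \to X^{\an}$ is Zariski dense because its image contains a Zariski open of $X$. For the converse $(2) \Rightarrow (1)$, I argue by contradiction: suppose $X^{\an}$ is $K$-analytically special while $q(X) \geq 2$. The ruling on a minimal birational model gives a dominant rational map $X \dashrightarrow C$ with $g(C) = q(X) \geq 2$, and this extends to a morphism because rational maps from smooth varieties to curves of positive genus are everywhere defined. A curve of genus $\geq 2$ embeds in its Jacobian and is of log general type, hence pseudo-$K$-analytically Brody hyperbolic by the main theorem of \cite{MorrowNonArchGGLV}. Applying \autoref{thm:nofibrationtoBrody} to the analytification $X^{\an} \to C^{\an}$ then contradicts the assumed specialness of $X^{\an}$.

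For $(1) \Leftrightarrow (3)$, I would first establish that the tempered fundamental group of a smooth projective surface over $K$ is a birational invariant. This follows from the factorization of birational maps between smooth projective surfaces into sequences of point blow-ups, together with the fact that blowing up a smooth point replaces it by a tempered simply connected $\mP^1$, so each blow-up induces an isomorphism on $\pi_1^{\temp}$. Reduced to the case $X = \mP^1 \times C$, Andr\'e's K\"unneth formula for the tempered fundamental group and the vanishing $\pi_1^{\temp}((\mP^1)^{\an}) = 1$ give $\pi_1^{\temp}(X^{\an}) \cong \pi_1^{\temp}(C^{\an})$. Finally, $\pi_1^{\temp}(C^{\an})$ is virtually abelian precisely when $g(C) \leq 1$: for $g(C) = 0$ it is trivial; for $g(C) = 1$ it is abelian, the precise structure being controlled by whether $C$ has good or Tate reduction; and for $g(C) \geq 2$ its profinite completion is the \'etale fundamental group of a higher genus curve, which contains a non-abelian free subgroup, so $\pi_1^{\temp}(C^{\an})$ itself cannot be virtually abelian.

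The main obstacle I anticipate is justifying the birational invariance of $\pi_1^{\temp}$ and the K\"unneth formula in the non-Archimedean analytic setting, which is not entirely formal; invoking Andr\'e's construction of tempered coverings and checking that tempered covers descend along a blow-down is the most delicate point. A subsidiary technical issue is verifying that the composition in the argument for $(1) \Rightarrow (2)$ genuinely defines an analytic morphism on all of $\mathcal{U}$ rather than merely an analytic rational map, which should follow because birational maps between smooth projective surfaces are morphisms away from finitely many points and analytification preserves this property.
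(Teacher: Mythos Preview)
Your proposal is correct and follows essentially the same route as the paper: reduce via Enriques--Kodaira and birational invariance to a $\mP^1$-bundle over a curve $C$, then use the dichotomy $g(C)\le 1$ versus $g(C)\ge 2$ to settle each equivalence. For $(2)\Rightarrow(1)$ you invoke \autoref{thm:nofibrationtoBrody} applied to the dominant map $X^{\an}\to C^{\an}$, which is exactly the paper's mechanism (phrased there as ``any map from a big open of an algebraic group lands in a fiber''); for $(1)\Leftrightarrow(3)$ you reduce to $\pi_1^{\temp}(C^{\an})$ and analyze the genus, as does the paper.

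The obstacles you flag are real but already handled in the literature, so you need not redo them: birational invariance of $\pi_1^{\temp}$ for smooth proper $K$-manifolds is a theorem of Lepage (cited in the paper as \autoref{lem:birationalinvariancetemp}), and the identification $\pi_1^{\temp}(X^{\an})\cong\pi_1^{\temp}(C^{\an})$ for a $\mP^1$-bundle follows from the tempered simple connectedness of $\mP^1$ without needing a full K\"unneth formula. Your argument that $\pi_1^{\temp}(C^{\an})$ is not virtually abelian for $g(C)\ge 2$ via the profinite completion containing a non-abelian free pro-$\ell$ group is a valid alternative to the paper's argument (which instead passes to a finite \'etale cover $C'$ and uses that $\pi_1^{\alg}(C')$ is centerless); both work.
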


The above results inspire us to formulate non-Archimedean counterparts to conjectures of Campana. 
Below, we present shortened versions of these conjectures as many of the definitions have been omitted from our discussion up to this point. For precise statements of these conjectures, we refer the reader to Section \ref{sec:NonArchCampana}. 

\begin{conjecture}[Non-Archimedean Campana's conjectures]
Let $X/K$ be a smooth projective variety. Then, the following are equivalent:
\begin{enumerate}
\item $X$ is special (\autoref{def:specialBogomolov}, \autoref{def:specialfibration});
\item $X^{\an}$ is $k$-analytically special. 
\end{enumerate}
\end{conjecture}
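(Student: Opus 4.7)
The plan is to prove the two implications separately, using the partial results already established in the paper as the backbone and reducing the general conjecture, as much as possible, to those cases.

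For the implication (2) $\Rightarrow$ (1), I would argue by contrapositive. Suppose $X$ is not special; then, by the fibration characterization (\autoref{def:specialfibration}), $X$ admits a dominant almost holomorphic rational map $f\colon X \dashrightarrow Y$ whose orbifold base $(Y,\Delta_f)$ is of general type. The first task is an orbifold extension of \autoref{thm:nofibrationtoBrody}, asserting that no $K$-analytically special $K$-analytic space can dominate a positive-dimensional pseudo-$K$-analytically Brody hyperbolic orbifold; combined with a non-Archimedean analogue of the principle that orbifold pairs of general type are pseudo-Brody hyperbolic (extending the results of \cite{MorrowNonArchGGLV} from the semi-abelian logarithmic case to general orbifold pairs), this yields the required contradiction. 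The proof of \autoref{thm:nofibrationtoBrody} given in the paper should adapt once the correct framework for $K$-analytic orbifolds is in place, so the real input is building that framework and proving the relevant hyperbolicity statement for orbifold pairs of general type.

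For the implication (1) $\Rightarrow$ (2), I would invoke Campana's core decomposition: a special variety $X$ is built, via a tower of fibrations, from rationally connected pieces and from (orbifold) étale covers of semi-abelian varieties. The semi-abelian pieces are handled by \autoref{thm:closedabelianspecial}, which already produces Zariski dense analytic morphisms from big open subsets of semi-abelian varieties; the rationally connected pieces admit dominant morphisms from $\mP^n$, and $(\mP^n)^{\an}$ receives a Zariski dense analytic morphism from $(\Gm^n)^{\an}$, which is itself an open of $(\Ga^n)^{\an}$ with complement of codimension $\geq 2$. The inductive step then consists of taking products of connected algebraic groups corresponding to the fiber and base, and using the extension-of-meromorphic-maps results highlighted in the abstract (together with the indeterminacy locus analysis) to glue the fiberwise Zariski dense analytic morphisms into a single analytic morphism $\sU \to X^{\an}$ with $\codim(G^{\an}\setminus \sU)\geq 2$. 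Maintaining the codimension-two condition across the tower is the chief technical burden.

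The main obstacle is the direction (1) $\Rightarrow$ (2), and within it the gluing of the core tower while controlling the indeterminacy locus of the resulting meromorphic map into $X^{\an}$. Even over $\mC$ the analogous Brody-special direction is a deep open conjecture, established only in low-dimensional cases such as surfaces of negative Kodaira dimension (cf.\ \autoref{thm:surfaceKinfinity}) and for certain fibrations over abelian varieties. A reasonable strategy is therefore to first confirm the conjecture unconditionally in the cases where the complex counterpart is known: surfaces of all Kodaira dimensions, varieties admitting a dominant rational map to a semi-abelian variety with special generic fiber, and varieties whose core has relative dimension zero. A full proof in general would likely require non-Archimedean analogues of Graber--Harris--Starr (for the rationally connected pieces) and of Campana's orbifold additivity conjecture $C_{n,m}^{\mathrm{orb}}$, both of which are currently out of reach.
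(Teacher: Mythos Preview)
The statement under consideration is a \emph{conjecture} in the paper, not a theorem: the paper does not prove it and does not claim to. It is stated in the introduction and then reformulated more fully as \autoref{conjecture:Campana} in Section~\ref{sec:NonArchCampana}, where it is left open. The paper's contribution is to supply evidence for it in restricted settings: \autoref{coro:specialabelian} establishes the equivalence for closed subvarieties of semi-abelian varieties, and \autoref{thm:surfaceKinfinity} establishes it for smooth projective surfaces of negative Kodaira dimension. There is therefore no ``paper's own proof'' to compare your proposal against.

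You appear to be aware of this: your text is explicitly a strategic outline rather than a proof, and you flag that the direction $(1)\Rightarrow(2)$ would require non-Archimedean analogues of Graber--Harris--Starr and of Campana's orbifold additivity conjecture, which are out of reach. That assessment is accurate. Your sketch for $(2)\Rightarrow(1)$ is also honest about the missing ingredient: one would need to know that general-type \emph{orbifold} bases are pseudo-$K$-analytically Brody hyperbolic, whereas the paper (via \cite{MorrowNonArchGGLV} and \autoref{thm:starting_point}) only handles the logarithmic case inside semi-abelian varieties. So your proposal is not a proof and should not be presented as one; it is a reasonable outline of what a proof would have to contain, consistent with the partial results the paper actually proves, and with the same obstacles that keep the complex-analytic analogue (Brody special $\Leftrightarrow$ special) open as well.
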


\begin{conjecture}[Non-Archimedean Campana's abelianity conjecture for fundamental groups]
Let $X/K$ be a smooth projective variety. If $X^{\an}$ is $K$-analytically special, then $\pi_1^{\temp}(X^{\an})$ is virtually abelian.
\end{conjecture}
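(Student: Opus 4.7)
The plan is to follow the strategy Campana pursued in the complex analytic setting, where the corresponding statement is known in many cases thanks to work of Campana, Yamanoi, and Javanpeykar--Rousseau; the key organizing tool will be the Albanese morphism $\alpha\colon X\to \Alb(X)$ together with a (conjectural) non-Archimedean analogue of Campana's core fibration, and the technical foundation will be Andr\'e's formalism of the tempered fundamental group, in particular its functoriality under analytification and its behavior in fibrations.

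As a first step, consider the analytified Albanese map $\alpha^{\an}\colon X^{\an}\to \Alb(X)^{\an}$. Because $X^{\an}$ is $K$-analytically special, composing a Zariski dense test morphism $\sU \to X^{\an}$ with $\alpha^{\an}$ exhibits the Zariski closure of the image, $Y\subset \Alb(X)$, as a target of a Zariski dense morphism from a big open in an algebraic group; by \autoref{coro:specialabelian}, $Y$ is then the translate of an abelian subvariety. Thus the induced map $\pi_1^{\temp}(X^{\an})\to \pi_1^{\temp}(\Alb(X)^{\an})$ has image inside $\pi_1^{\temp}(Y^{\an})$, which is virtually abelian since the tempered fundamental group of (the analytification of) an abelian or semi-abelian variety is an extension of a profinite abelian group by a lattice coming from its Raynaud uniformization. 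The conjecture therefore reduces to controlling the kernel of this map, equivalently the tempered fundamental groups of the general fibers of $\alpha$.

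I would handle this by induction on $\dim X$ using the core fibration $c_X\colon X\dashrightarrow C(X)$: \autoref{thm:nofibrationtoBrody} forces $C(X)$ either to be a point or to fail to be pseudo-$K$-analytically Brody hyperbolic, and combining this with Campana's dichotomy should show that the general fiber of $\alpha$ remains $K$-analytically special of strictly smaller dimension. A non-Archimedean fibration exact sequence for $\pi_1^{\temp}$, combined with a rigid-analytic analogue of Yamanoi's theorem on fundamental groups of fibered special varieties (\cite[Theorem 1.1]{yamanoi2010fundamental}), would express $\pi_1^{\temp}(X^{\an})$ as an extension of two virtually abelian groups, and analyzing the extension class through the Albanese should show that this extension is itself virtually abelian. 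The hard part, and what stops this sketch from being a full proof, is exactly the non-Archimedean ingredients it assumes: a workable orbifold theory of specialness and a core fibration for $K$-analytic spaces, together with a Yamanoi-style control of $\pi_1^{\temp}$ along a fibration with special fibers and semi-abelian base. The last of these in particular would require a substantially new structural understanding of the tempered fundamental group and appears to be the genuine crux of the conjecture; the surface case \autoref{thm:surfaceKinfinity} bypasses these difficulties only because the MMP-based classification provides the required structure by hand.
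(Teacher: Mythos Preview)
The statement you are attempting to prove is stated in the paper as a \emph{conjecture}, not a theorem: the paper offers no proof, and indeed presents it (as part of \autoref{conjecture:Abelian}) as an open problem motivated by the partial evidence of \autoref{thm:closedabelianspecial} and \autoref{thm:surfaceKinfinity}. So there is no ``paper's own proof'' to compare against, and your proposal should be read as a research outline rather than a proof attempt to be checked.

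As an outline, your strategy is reasonable and you are candid about the missing ingredients. Two points are worth flagging beyond what you already acknowledge. First, the step ``express $\pi_1^{\temp}(X^{\an})$ as an extension of two virtually abelian groups'' is more delicate than it sounds: even granting a fibration exact sequence for $\pi_1^{\temp}$ (which is not available in general for non-proper or singular fibers), an extension of virtually abelian groups need not be virtually abelian---the discrete Heisenberg group is an extension of $\mZ^2$ by $\mZ$ yet is not virtually abelian---so ``analyzing the extension class through the Albanese'' is carrying real weight and would need to be made precise. Second, your use of \autoref{coro:specialabelian} to control the image of $\alpha^{\an}$ is fine, but note that the paper already records the stronger consequence that $\alpha$ is surjective onto $\Alb(X)$ whenever $X^{\an}$ is $K$-analytically special, so $Y=\Alb(X)$ and $\pi_1^{\temp}(\Alb(X)^{\an})$ is genuinely abelian (not merely virtually so). None of this closes the gap: as you correctly identify, the crux is a non-Archimedean analogue of Yamanoi's theorem, and that remains wide open.
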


\subsection*{Preparatory results}
In order to prove the above theorems, we need to prove three auxiliary results, which we believe to be of independent interest.

The first one (\autoref{thm:meromorphicmap}) concerns the indeterminacy locus of a meromorphic mapping between $K$-analytic spaces and is a non-Archimedean analogue of a result due to Remmert \cite{remmert1957holomorphic}. The proof presented in that section has been provided to us by Brian Conrad. 
We use this result to show that our notion of $K$-analytically special is a bi-meromorphic invariant, which is a crucial property (see e.g., the proof of \autoref{thm:surfaceKinfinity}). 
Roughly speaking, a $K$-analytic space being $K$-analytically special means that it admits a dominant, meromorphic map from an algebraic group, and so in order for this notion to be a bi-meromorphic invariant, we need to understand the indeterminancy locus of a general meromorphic map. 
\autoref{thm:meromorphicmap} states that when the source is normal and the target is reduced and proper, the domain of definition of a meromorphic map is an open \textit{analytic} subset whose complement has codimension at least two. Note that this is precisely is the condition we require in our definition of $K$-analytically special.

For the second result, we begin by offering a new, more natural definition of pseudo-$K$-analytically Brody hyperbolic (\autoref{defn:KBrodyMod}) and deduce an equivalent way of testing this notion (\autoref{thm:equivalentBrody}), which brings it closer in line with our notion of $K$-analytically special.  
To expand on this, the first definition of pseudo-$K$-analytically Brody hyperbolic appeared in \cite[Definition 2.2]{MorrowNonArchGGLV} and contained a seemingly unnatural condition of studying algebraic maps from big algebraic opens of abelian varieties. 
Moreover, with this original definition, it is unclear if the statement of \autoref{thm:nofibrationtoBrody} is true. 
To fix this issue, we modify this definition to test pseudo-$K$-analytically Brody hyperbolicity on big \textit{analytic} opens of analytifications of algebraic groups and prove that one can actually test this notion on analytic maps from $\mG_{m,K}^{\an}$ and from big \textit{analytic} opens of analytifications of abelian varieties. 
With this new definition, we immediately arrive at \autoref{thm:nofibrationtoBrody}. 

The final preparatory theorem (\autoref{thm:extendanalyticmorphism}) is an extension result concerning meromorphic maps from smooth, irreducible adic spaces to the analytification of a semi-abelian variety, which is a non-Archimedean analogue of \cite[Section 8.4, Corollary 6]{BLR} and \cite[Lemma A.2]{MochizukiAbsoluteAnabelian}. 
We use this result and \autoref{thm:equivalentBrody} show that \cite[Theorem A]{MorrowNonArchGGLV} remains true with our new definition of pseudo-$K$-analytically Brody hyperbolic (see \autoref{prop:Brodyequivalence}). 
Equipped with this fact, the proof of \autoref{thm:closedabelianspecial} follows from utilizing results of Vojta \cite{Vojta:IntegralPointsII} on the Ueno fibration of a closed subvariety of a semi-abelian variety.

\subsection*{Organization}
The paper has two parts. Sections \ref{sec:nonArchspaces}, \ref{sec:meromorphicmaps}, and \ref{sec:Brodyhyperbolic} form the first part and consist of background material and auxiliary results. The remaining sections focus on defining and proving results related to our notion of $K$-analytically special. More precisely, we organize the paper as follows.

In Section \ref{sec:nonArchspaces}, we review non-Archimedean analytic spaces, describe several properties preserved by analytification, and study the sheaf of meromorphic functions on a taut locally strongly Noetherian adic space. 
In Section \ref{sec:meromorphicmaps}, we prove \autoref{thm:meromorphicmap}, following the proof given to us by Brian Conrad, which is a non-Archimedean analogue of a theorem of Remmert stating that the indeterminacy locus of a meromorphic morphism between a normal and a proper non-Archimedean analytic space has codimension at least two. 
In Section \ref{sec:Brodyhyperbolic}, we give a new definition of pseudo-$K$-analytically Brody hyperbolic and deduce an equivalent way of testing this notion (\autoref{thm:equivalentBrody}).

We begin our discussion on the various notions of specialness in Section \ref{sec:specialnotions} and offer our non-Archimedean characterization of specialness, which we call $K$-analytically special, in Section \ref{sec:nonArchspecial}. In this section, we prove several basic properties of being $K$-analytically special and our first main theorem (\autoref{thm:nofibrationtoBrody}). 
In Section \ref{sec:specialsubvarietiessemiabelian}, we prove our second main theorem (\autoref{thm:closedabelianspecial}) concerning when a closed subvariety of a semi-abelian variety is $K$-analytically special. 
In Section \ref{sec:sufacenegKodaira}, we prove our final main theorem (\autoref{thm:surfaceKinfinity}) which characterizes when a projective surface of negative Kodaira dimension is $K$-analytically special in terms of its tempered fundamental group. 
To conclude, we make non-Archimedean counterparts to Campana's conjectures in Section \ref{sec:NonArchCampana}. 

\subsection*{Conventions}
We establish the following to be used throughout. 

\subsubsection*{Fields and algebraic geometry}
We will let $k$ denote an algebraically closed field of characteristic zero and let $K$ be an algebraically closed, complete, non-Archimedean valued field of characteristic zero. A variety $X$ over a field will be an integral, separated scheme of finite type over said field. 
We will use $K_X$ to denote the canonical divisor on $X$ and $q(X) = h^0(X,\Omega_X^1)$ to denote the irregularity of $X$. 
For a smooth variety $X/k$ and a line bundle $\mathscr{L}$ on $X$, we use $\kappa(X,\mathscr{L})$ to denote the Iitaka dimension of $\mathscr{L}$, which we briefly recall. 
For each $m\geq 0$ such that $h^0(X,\mathscr{L}^{\otimes m}) \neq 0$, the linear system $|\mathscr{L}^{\otimes m}|$ induces a rational map from $X$ to a projective space of dimension $h^0(X,\mathscr{L}^{\otimes m}) - 1$. 
The Iitaka dimension of $\mathscr{L}$ is the maximum over all $m\geq 1$ of the dimension of the image for this rational map. 
The Kodaira dimension $\kappa(X)$ of $X$ is the Iitaka dimension of the canonical bundle. 

\subsubsection*{Analytic spaces}
We will make use of various analytifications of a variety $X$ over a field.

When $k = \mC$, we will use $X^{\an}$ to denote the complex analytification of $X$, which corresponds to taking the complex valued points $X(\mC)$ of $X$.

When $k = K$, we will denote the adic space associated with $X$ by $X^{\an}$ (as in \cite{huber2}). Sometimes we will need to consider the corresponding Berkovich space and we denote the Berkovich analytification of $X$ by $X^{\Ber}$ (as in \cite{BerkovichSpectral} or good Berkovich $K$-analytic spaces in \cite{BerkovichEtaleCohomology}).  
As these non-Archimedean analytifications are fundamental objects in our study, we devote Section \ref{sec:nonArchspaces} to describing their properties and relationships. 
When referring to rigid analytic, Berkovich, and adic spaces which may not be algebraic, we will use script notation $\sX,\sY,\sZ$. 
In our work, all rigid analytic spaces over $\Sp(K)$ are taut, all Berkovich $K$-analytic spaces are strict and Hausdorff, and all adic spaces are taut, locally strongly Noetherian, and locally of finite type over $\Spa(K,K^{\circ})$, unless otherwise stated. 
We will mainly use adic spaces but sometimes we need to refer to a different type of analytic space and will make it clear as to which category of analytic space we are using in certain instances.

That being said, whenever we refer to a \cdef{$K$-analytic space}, we mean a taut, locally strongly Noetherian, and locally of finite type adic space over $\Spa(K,K^{\circ})$.

\subsection*{Acknowledgments}
We are very grateful to Brian Conrad for supplying the proof of \autoref{thm:meromorphicmap} and to Ariyan Javanpeykar for the suggestion to look at big opens for the definition of $K$-analytically special. 
We also thank Marta Benozzo, Marc-Hubert Nicole, and Remy van Dobben de Bruyn for helpful conversations and extend our thanks to Lea Beneish, Ariyan Javanpeykar, Marc-Hubert Nicole, and Alberto Vezzani for useful comments on a first draft.

\section{\bf Preliminaries on non-Archimedean analytic spaces}
\label{sec:nonArchspaces}

In this section, we provide necessary background on non-Archimedean analytic spaces. 
In particular, we describe the equivalence between certain subcategories of rigid analytic, Berkovich $K$-analytic, and adic spaces, recall basic properties of adic spaces, discuss properties of analytifications of algebraic varieties and algebraic morphisms, and finally introduce the sheaf of meromorphic functions on a taut locally strongly Noetherian adic space.

\subsection{Comparisons between rigid analytic, Berkovich $K$-analytic spaces, and adic spaces}\label{subsec:comparision}
To begin, we recall the comparision between rigid analytic, Berkovich $K$-analytic, and adic spaces. 

\begin{theorem}[\protect{\cite[Theorem 1.6.1]{BerkovichEtaleCohomology}, \cite[Proposition 8.3.1]{huber}}]\label{thm:comparisionrigidadic}
The category of taut rigid analytic spaces over $\Sp(K)$ is equivalent to the category of Hausdorff strict Berkovich $K$-analytic spaces. 
\end{theorem}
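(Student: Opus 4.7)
The plan is to construct quasi-inverse functors in both directions. For the forward direction, given a taut rigid analytic space $\sX$ over $\Sp(K)$, I would build its associated Berkovich space $\sX^{\Ber}$ affinoid-locally: for each admissible affinoid $\Sp(A) \subset \sX$, form $\mathcal{M}(A)$, the set of bounded multiplicative seminorms on $A$ extending the valuation on $K$, topologized so that $p \mapsto |f(p)|$ is continuous for every $f \in A$. Each $\mathcal{M}(A)$ is compact Hausdorff, and gluing along admissible overlaps produces $\sX^{\Ber}$. In the reverse direction, to a Hausdorff strict Berkovich $K$-analytic space $\sY$ I would associate the rigid space whose point set is $\{y \in \sY : [\mathcal{H}(y):K] < \infty\}$, with admissible opens induced by the Berkovich affinoid subdomains and admissible coverings inherited from open covers on $\sY$.

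The verification breaks into four steps. First, one checks that $\sX^{\Ber}$ is Hausdorff whenever $\sX$ is taut; each affinoid piece is compact Hausdorff, and tautness ensures that the overlap gluing data is sufficiently locally finite to preserve Hausdorffness globally. Second, one verifies that the induced rigid structure on the rigid points of a Hausdorff strict Berkovich space is taut, by pulling back the compactness of the affinoid Berkovich pieces to the G-topology. Third, the two compositions are naturally isomorphic to the identity: on an affinoid chart, the rigid points of $\mathcal{M}(A)$ recover exactly $\Sp(A)$ by construction, while the density of rigid points in $\mathcal{M}(A)$ (a standard consequence of the maximum modulus principle) ensures that passing to rigid points and back loses no structure. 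Fourth, morphism correspondence is essentially formal: a morphism of rigid spaces is locally a bounded $K$-algebra map $A \to B$, which induces a continuous map $\mathcal{M}(B) \to \mathcal{M}(A)$ by pullback of seminorms, and conversely a morphism of Berkovich spaces restricts compatibly to rigid points via this same algebraic data.

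The main obstacle is aligning the two notions of separatedness. Tautness in rigid analytic geometry is a finiteness condition on admissible overlaps, whereas Hausdorffness of a Berkovich space is a closure condition on the diagonal. The translation between these two formulations hinges on the compactness of each affinoid Berkovich spectrum $\mathcal{M}(A)$, which allows one to convert topological statements about $\sX^{\Ber}$ into G-topological statements about $\sX$ and vice versa. Once this correspondence is established, the remaining verifications are routine categorical and sheaf-theoretic arguments, completing the equivalence.
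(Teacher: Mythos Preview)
The paper does not prove this statement; it is quoted as a background fact from the literature, with the proof entirely deferred to the cited references \cite[Theorem 1.6.1]{BerkovichEtaleCohomology} and \cite[Proposition 8.3.1]{huber}. There is therefore no ``paper's own proof'' to compare your proposal against.

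That said, a brief comment on your sketch. The overall architecture---building the correspondence affinoid-locally via $\Sp(A) \leftrightarrow \mathcal{M}(A)$ and recovering the rigid space as the set of rigid points of the Berkovich space---is indeed how the cited references proceed. However, your handling of the separatedness/tautness correspondence is imprecise. Tautness is not ``a finiteness condition on admissible overlaps''; that description fits quasi-separatedness. In Berkovich's original formulation the rigid-side hypothesis is that the space be quasi-separated with an admissible affinoid covering of finite type, and the identification of this class with the ``taut'' condition goes through Huber's adic framework (hence the second citation). Your step ``tautness ensures that the overlap gluing data is sufficiently locally finite to preserve Hausdorffness globally'' is therefore not the actual mechanism, and as written it would not establish the Hausdorff claim. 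If you want to flesh this out into a genuine proof rather than a citation, you would need to engage with the precise definition of tautness (closures of quasi-compact admissible opens are quasi-compact, on top of quasi-separatedness) and trace how that translates to the Hausdorff property on the Berkovich side.
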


\begin{theorem}[\protect{\cite[Proposition 8.3.1]{huber}, \cite[Theorem 0.1]{henkel:comparision}}]\label{thm:comparision}
The category of taut adic spaces locally of finite type over $\Spa(K,K^{\circ})$ is equivalent to the category of Hausdorff strict Berkovich $K$-analytic spaces. 
At the level of topological spaces, this equivalence sends an adic space $\sX$ to its universal Hausdorff quotient $[\sX]$. 
\end{theorem}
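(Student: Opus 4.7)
The plan is to construct mutually inverse functors realizing the equivalence. The forward functor sends an adic space $\sX$ to its universal Hausdorff quotient $[\sX]$ equipped with a Berkovich-analytic structure obtained by pushforward of the structure sheaf; the inverse functor reinstates higher-rank valuations. Because both categories admit atlases by strict affinoids, the central task is to establish the affinoid case cleanly and then to verify functoriality and gluing. The reduction from taut/Hausdorff to quasi-compact pieces should be done via admissible affinoid covers that are compatible on both sides.

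First I would treat the affinoid case. Given a strictly $K$-affinoid algebra $A$, the points of $\Spa(A,A^\circ)$ are continuous valuations on $A$ of arbitrary finite rank, while $\cM(A)$ consists of those of rank at most one (equivalently, bounded multiplicative real seminorms). Each $v\in \Spa(A,A^\circ)$ admits a unique maximal horizontal generization $v'$ of rank $\le 1$, yielding a continuous, closed, surjective map $[\,\cdot\,]\colon \Spa(A,A^\circ)\to \cM(A)$. Using Huber's classification of points and the structure of analytic spectra, one shows the fibers of $[\,\cdot\,]$ are precisely the equivalence classes under the ``cannot be separated by disjoint opens'' relation, so $[\,\cdot\,]$ is the universal Hausdorff quotient at the level of topological spaces. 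Matching the presheaves of analytic functions on rational subdomains (which generate the topologies on both sides) then identifies the analytic structures on the quotient.

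Next I would glue. Since morphisms of strict affinoids commute with $[\,\cdot\,]$ and the construction is compatible with rational localization, it sheafifies to a functor on the category of taut adic spaces locally of finite type over $\Spa(K,K^\circ)$. Here the tautness hypothesis enters as the precise condition ensuring that $[\sX]$ is globally Hausdorff: tautness says every quasi-compact open has quasi-compact closure, which after passing to the quotient translates into the Hausdorff separation axiom. Likewise ``locally of finite type over $\Spa(K,K^\circ)$'' corresponds exactly to strictness on the Berkovich side. For the inverse functor, one replaces each strict affinoid chart $\cM(A)$ of a Hausdorff strict Berkovich space by the full adic spectrum $\Spa(A,A^\circ)$ and glues using the Berkovich transition data, checking that the outcome is independent of the chosen atlas via a common refinement argument.

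The main obstacle is the affinoid comparison: verifying that $[\,\cdot\,]$ really does realize the universal Hausdorff quotient \emph{together with} the correct structure sheaf, so that the pushforward analytic data agrees with Berkovich's construction of $\cM(A)$. Once this affinoid identification and the translation between (tautness, local finite type) and (Hausdorff, strict) are in place, functoriality, gluing, and the equivalence of categories follow formally; I would then simply cite Huber's Proposition~8.3.1 and Henkel's comparison theorem for the full verification.
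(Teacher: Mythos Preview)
The paper does not give a proof of this statement at all: it is recorded purely as a citation to \cite[Proposition 8.3.1]{huber} and \cite[Theorem 0.1]{henkel:comparision}, with no argument reproduced. Your sketch is a reasonable outline of the strategy underlying those references---the affinoid comparison via the rank-one generization map $\Spa(A,A^\circ)\to\cM(A)$, the identification of tautness with the Hausdorff property of the quotient, and gluing along strict affinoid atlases---and you yourself ultimately defer to the same citations for the full verification. There is therefore nothing substantive to compare: the paper treats this as a black-box input from the literature, whereas you have written an expository outline of what that black box contains.
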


\begin{remark}\label{remark:nonHausdorff}
We note that the adic space associated with an algebraic variety is an example of a taut adic space locally of finite type, and since it will be relevant in Subsection \ref{appendix}, we emphasize that \autoref{thm:comparision} tells us that taut adic spaces locally of finite type are not Hausdorff. 
\end{remark}

We also recall that the notions of properness and finiteness are equivalent for rigid analytic and Berkovich $K$-analytic spaces.

\begin{lemma}\label{lemma:equivalenceproperfinite}
Let $\sX,\sY$ be rigid analytic spaces over $K$, and let $\sX^{\Ber}$, $\sY^{\Ber}$ denote the associated Berkovich $K$-analytic spaces. 
Let $f\colon \sX \to \sY$ denote a morphism of rigid analytic spaces, and let $f^{\Ber}\colon \sX^{\Ber} \to \sY^{\Ber}$ denote the associated morphism of Berkovich $K$-analytic spaces. Then,
\begin{enumerate}
\item $f$ is proper if and only if $f^{\Ber}$ is proper,
\item $f$ is finite if and only if $f^{\Ber}$ is finite. 
\end{enumerate}
\end{lemma}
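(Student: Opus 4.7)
The plan is to combine local characterizations of finiteness and properness with the categorical equivalence of \autoref{thm:comparisionrigidadic}, which identifies taut rigid analytic spaces with Hausdorff strict Berkovich $K$-analytic spaces and, at the affinoid level, identifies $\Sp(A)$ with $\mathcal{M}(A)$ while preserving the underlying Banach $K$-algebra $A$. Consequently, admissible affinoid covers on the rigid side correspond to the analogous covers on the Berkovich side, and any condition characterizable in terms of affinoid algebras transfers across the equivalence.

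Part (2) should be essentially immediate from this observation. A morphism is finite in either category precisely when, locally on the target, it admits a description of the form $\Sp(B)\to\Sp(A)$ (respectively $\mathcal{M}(B)\to\mathcal{M}(A)$) with $B$ finite as an $A$-module. Since the equivalence transports admissible affinoid covers and is algebraic on affinoids, finiteness of $f$ is equivalent to finiteness of $f^{\Ber}$.

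Part (1) is more delicate because the two definitions of properness differ significantly: in rigid geometry one uses Kiehl's condition that $f$ is separated, quasi-compact, and locally on the target admits two admissible affinoid covers of the preimage with one set relatively compact in the other over the base; in Berkovich geometry properness means the underlying map is Hausdorff, topologically proper, and boundaryless in the sense of Berkovich. The approach is to invoke the known comparison between these two notions of properness under the correspondence of \autoref{thm:comparisionrigidadic}, which is established in the works of Berkovich and Temkin, and so the proof reduces to applying the appropriate statement.

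The main obstacle is the comparison in (1), as it rests on a nontrivial translation between the rigid notion of a relatively compact affinoid subdomain and the Berkovich notion of being boundaryless relative to the base. Since this comparison has been carried out in detail in the literature, the proof reduces to identifying and citing the appropriate result; the finiteness part (2) then follows with little additional work from the affinoid-level equivalence of categories.
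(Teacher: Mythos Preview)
Your proposal is correct and takes essentially the same approach as the paper: both reduce the lemma to known comparison results in Berkovich's foundational work. The paper's proof is simply a pair of citations (to \cite[Proposition 3.3.2]{BerkovichSpectral} and \cite[Example 1.5.3]{BerkovichEtaleCohomology}), so your discussion is already more expansive than what appears there; you may wish to replace the vague attribution to ``Berkovich and Temkin'' with these precise references.
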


\begin{proof}
This follows from \cite[Proposition 3.3.2]{BerkovichSpectral} and \cite[Example 1.5.3]{BerkovichEtaleCohomology}. 
\end{proof}

\subsection{Basic properties of adic spaces}
We now recall the notion of reduced, normal, and irreducible adic spaces following \cite{mann:propertiesadicspaces}.

\begin{definition}[\protect{\cite[Definition 1.3]{mann:propertiesadicspaces}}]\label{defn:normal}
The adic space $\sX$ is \cdef{normal} (resp.~\cdef{reduced}) if it can be covered by affinoid adic spaces of the form $\Spa(A,A^{+})$ where $A$ is normal (resp.~reduced). 
\end{definition}

%

\begin{definition}[\protect{\cite[Definition 1.11]{mann:propertiesadicspaces}}]
The adic space $\sX$ is \cdef{irreducible} if it cannot be written as the disjoint union of two proper closed adic subspaces.
\end{definition}

\subsection{Properties of analytifications of algebraic morphisms}\label{subsec:propertiesanalytification}
We now discuss facts concerning the analytification functor from locally finite type $K$-schemes to adic spaces over $K$. 
The analogous results for rigid analytic spaces over $K$ and Berkovich $K$-analytic are treated in \cite[Section 5]{conrad-conn} and \cite[Section 3.4]{BerkovichSpectral}, respectively. 

\begin{lemma}\label{lemma:absoluteproperties}
Let $X$ be a $K$-scheme which is locally of finite type, and let $X^{\an}$ (resp.~$X^{\Ber}$) denote the adic space  (resp.~Berkovich $K$-analytic space) associated with $X$. 
Then,
\begin{enumerate}
\item $X$ is reduced if and only if $X^{\an}$ (resp,~$X^{\Ber}$) is reduced,
\item $X$ is normal if and only if $X^{\an}$ (resp.~$X^{\Ber}$) is normal,
\item $X$ is separated if and only if $X^{\an}$ is separated (resp.~$|X^{\Ber}|$ is Hausdorff),
\item $X$ is smooth if and only if $X^{\an}$ (resp.~$X^{\Ber}$) is smooth,
\item $X$ is irreducible if and only if $X^{\an}$ (resp.~$X^{\Ber}$) is irreducible. 
\end{enumerate}
\end{lemma}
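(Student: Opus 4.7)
The plan is to reduce the adic statements to their Berkovich counterparts via \autoref{thm:comparision} and then appeal to the established analogues in \cite[Section 3.4]{BerkovichSpectral} (or to the rigid-analytic versions surveyed in \cite[Section 5]{conrad-conn}, transferred via \autoref{thm:comparisionrigidadic}). In each case, one direction is formal, coming from the compatibility of analytification with local ring constructions, fibre products, and closed immersions, while the converse exploits that analytification is faithfully flat on stalks.

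For parts (1), (2), and (4), I would use that reducedness, normality and smoothness are all local on stalks. The morphism of locally ringed spaces $X^{\an}\to X$ induces, at each analytic point $x^{\an}$ lying over an algebraic point $x$, a map $\mathcal{O}_{X,x}\to \mathcal{O}_{X^{\an},x^{\an}}$ which is faithfully flat with geometrically regular fibres; here one uses that $K$-affinoid algebras are excellent since $\characteristic K=0$. Under such a ring map, the properties ``reduced,'' ``normal,'' and ``regular'' both descend and ascend, yielding the desired equivalences. Alternatively, smoothness may be verified via the Jacobian criterion after a local embedding into affine space, which is clearly preserved under analytification.

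For part (3), separatedness of a locally finite type $K$-scheme amounts to the diagonal $\Delta\colon X\to X\times_K X$ being a closed immersion; analytification preserves fibre products and carries closed immersions to closed immersions, and conversely a morphism of locally finite type $K$-schemes is a closed immersion if and only if its analytification is. This handles the adic statement. The Berkovich reformulation then follows from the standard fact, built into \autoref{thm:comparision}, that an adic space is separated precisely when the associated Berkovich space is Hausdorff (cf.~\cite[Proposition 3.4.3]{BerkovichSpectral}).

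The main obstacle, as I see it, is (5): an irreducible adic or analytic space is not merely one whose underlying topological space is irreducible, and analytic localization can in principle refine algebraic components, so one cannot simply quote a topological statement. The strategy is to reduce to the normal case by using that analytification commutes with normalization, giving $\widetilde{X^{\an}}\iso (\widetilde{X})^{\an}$; for a normal Noetherian scheme and, correspondingly, for a normal $K$-analytic space, irreducibility coincides with connectedness, so the comparison reduces to matching the connected components of $\widetilde{X}$ and of $\widetilde{X^{\an}}$. For this last step I would invoke Conrad's theory of irreducible components of rigid analytic spaces from \cite{conrad-conn}, transported to the adic and Berkovich settings via \autoref{thm:comparisionrigidadic} and \autoref{thm:comparision}.
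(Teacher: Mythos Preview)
Your proposal is correct and, in substance, coincides with the paper's approach: the paper's proof is simply a list of citations (to \cite{BerkovichSpectral}, \cite{conrad-conn}, \cite{huber}, and \cite{Ducros:Families}), and your arguments---faithful flatness on stalks plus excellence for (1), (2), (4); preservation of the diagonal under analytification for (3); and reduction to the normal case via compatibility of normalization with analytification for (5)---are precisely what underlies those cited results.

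Two minor remarks. First, your reference to \cite[Proposition 3.4.3]{BerkovichSpectral} for the separated/Hausdorff equivalence is off; the paper cites \cite[Theorem 3.4.8.(1)]{BerkovichSpectral} for that, while 3.4.3 covers reducedness and normality. Second, the paper's route for the adic statements is slightly different from yours: rather than transferring from Berkovich to adic via \autoref{thm:comparision}, the paper passes through the \emph{rigid} analytic statements in \cite{conrad-conn} and then uses Huber's comparison in \cite[\S 1.1.11]{huber} (and \cite[Remark 1.3.19, Proposition 1.7.11]{huber} for separatedness and smoothness). Both transfers are legitimate; yours is arguably cleaner since it avoids the intermediate rigid step, while the paper's has the advantage that \cite{conrad-conn} already packages the irreducibility statement (your part (5)) as \cite[Theorem 2.3.1]{conrad-conn}, so no further unpacking of the normalization argument is needed.
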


\begin{proof}
The first and second statements follow from \cite[Proposition 3.4.3]{BerkovichSpectral} for Berkovich $K$-analytic spaces and from \cite[Theorem 5.1.3.(1)]{conrad-conn} and \cite[\S 1.1.11.(c)]{huber} for adic spaces. 
The third statement follows from \cite[Theorem 3.4.8.(1)]{BerkovichSpectral} for Berkovich $K$-analytic spaces and from \cite[Theorem 5.2.1]{conrad-conn} and \cite[Remark 1.3.19]{huber} for adic spaces. 
The fourth statement follows from \cite[Proposition 3.4.6.(3)]{BerkovichSpectral} for Berkovich $K$-analytic spaces and from \cite[Theorem 5.2.1]{conrad-conn} and \cite[Proposition 1.7.11]{huber} for adic spaces. 
The fifth statement follows from  \cite[Proposition 2.7.16]{Ducros:Families} for Berkovich $K$-analytic spaces and from \cite[Theorem 2.3.1]{conrad-conn} and \cite[\S 1.1.11.(c)]{huber} for adic spaces. 
We mention that irreducibility of a Berkovich $K$-analytic space refers to irreducibility in the Zariski analytic topology (see \cite[\S 1.5.1]{Ducros:Families} for the definition). 
\end{proof}

\begin{lemma}\label{lemma:morphismpartiallyproper}
Let $f\colon X\to Y$ be a locally of finite type morphism of varieties over $K$, and let $f^{\an}\colon X^{\an} \to Y^{\an}$ (resp.~$f^{\Ber}\colon X^{\Ber} \to Y^{\Ber}$) denote the associated morphism of   adic spaces (resp.~Berkovich $K$-analytic spaces). Then, $f^{\an}$ (resp.~$f^{\Ber}$) is partially proper  (resp.~boundaryless). 
\end{lemma}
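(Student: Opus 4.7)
The plan is to factor $f$ locally as a closed immersion followed by a projection from affine space, and treat these two cases separately.

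First, I would use that partial properness for adic morphisms and boundarylessness for Berkovich morphisms are local on both source and target to reduce to the case where $Y = \Spec(B)$ is affine and $f$ is of finite type. Then $X$ embeds as a closed subscheme $i\colon X \hookrightarrow \mathbb{A}^n_Y$ for some $n \geq 0$, and $f$ factors as $p \circ i$ with $p\colon \mathbb{A}^n_Y \to Y$ the canonical projection.

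For the first factor, $i^{\an}$ (resp.\ $i^{\Ber}$) is again a closed immersion by functoriality of analytification, and closed immersions of $K$-analytic spaces are proper, in particular partially proper (resp.\ boundaryless). For the second factor, $p^{\an}$ (resp.\ $p^{\Ber}$) is obtained by base change along $Y^{\an} \to \Spa(K,K^{\circ})$ (resp.\ along $Y^{\Ber}$ mapping to the Berkovich spectrum of $K$) from the structure morphism of analytic affine $n$-space over $K$. This absolute structure morphism is partially proper (resp.\ boundaryless), a standard fact in both Huber's and Berkovich's frameworks, and both properties are preserved under base change.

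The conclusion follows by composing, since partial properness (resp.\ boundarylessness) is stable under composition. The main obstacle is pinning down the correct references in Huber's book and Berkovich's work for the absolute statement about analytic affine space and for the stability of these properties under composition and base change; none of these are deep, but Huber's definition of partial properness involves a somewhat subtle valuative criterion on vertical specializations, so care is needed to invoke the precise statements rather than to reprove them.
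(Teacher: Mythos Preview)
Your approach works in outline but the reduction step needs care. The closed immersion $X \hookrightarrow \mathbb{A}^n_Y$ requires $X$ to be affine, not merely $Y$; a projective $X$ over affine $Y$ admits no such embedding. So you must also localize on the source. But partial properness of adic morphisms is \emph{not} local on the source in general, since separatedness is not. The fix is easy here: $X$ and $Y$ are varieties, so $f$ is separated, hence $f^{\an}$ is separated by the analytification dictionary, and the remaining valuative condition in Huber's definition is local on the source. With this adjustment your factorization argument goes through; for Berkovich spaces boundarylessness genuinely is local on the source (it is a pointwise condition on the relative interior), so that half was already fine.

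The paper's route is shorter and rather different. For the Berkovich statement it simply observes that $X^{\Ber}$ and $Y^{\Ber}$ are each boundaryless over the base point $M(K)$ (a standard fact about analytifications of schemes), and then Berkovich's formula for the relative interior of a composition, namely $\Int(X^{\Ber}/M(K)) = \Int(X^{\Ber}/Y^{\Ber}) \cap (f^{\Ber})^{-1}(\Int(Y^{\Ber}/M(K)))$, immediately forces $f^{\Ber}$ to be boundaryless. The adic statement is then obtained by transferring through the rigid and adic comparison theorems rather than arguing directly in Huber's framework. Your approach has the virtue of being self-contained within each theory and would adapt more readily to morphisms not arising from varieties; the paper's approach exploits the absolute boundarylessness of analytifications to avoid any factorization or locality argument altogether.
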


\begin{proof}
The second statement  follows from \cite[Proposition 1.5.5.(ii)]{BerkovichEtaleCohomology} since $X^{\Ber}$ and $Y^{\Ber}$ are boundaryless Berkovich $K$-analytic spaces. 

For the first statement we first note that the first statement and \cite[Section 1.6]{BerkovichEtaleCohomology} imply that the associated morphism of rigid analytic spaces is partially proper, and now the result follows from \cite[Remark 1.3.19.(iii)]{huber}. 
\end{proof}

\begin{lemma}\label{lemma:surjectiveflat}
Let $f\colon X\to Y$ be a locally of finite type morphism of varieties over $K$, and let $f^{\an}\colon X^{\an} \to Y^{\an}$ (resp.~$f^{\Ber}\colon X^{\Ber} \to Y^{\Ber}$) denote the associated morphism of adic spaces  (resp.~Berkovich $K$-analytic spaces). Then,
\begin{enumerate}
\item $f$ is surjective if and only if $f^{\an}$ (resp.~$f^{\Ber}$) is surjective, 
\item $f$ is flat if and only if $f^{\an}$ (resp.~$f^{\Ber}$) is flat.
\end{enumerate}
\end{lemma}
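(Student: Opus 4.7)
The plan is to reduce both parts to two well-known properties of the analytification functor: the surjectivity of the natural point map from an analytification to its underlying scheme, and faithful flatness of analytification on stalks. The essential ingredients are all standard; the real work is bookkeeping across the three categories (rigid analytic, Berkovich, adic).

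For part (1), I would first treat the Berkovich setting. Analytification commutes with fiber products of locally finite type $K$-schemes, so for each Berkovich point $y' \in Y^{\Ber}$ with scheme-theoretic image $y \in Y$, the fiber $(f^{\Ber})^{-1}(y')$ is the Berkovich analytification of $X_y \times_{\kappa(y)} \mathcal{H}(y')$ over $\mathcal{H}(y')$. Combined with the facts that (a) the natural point map $Y^{\Ber} \to Y$ is surjective, (b) a locally finite type scheme over a complete non-Archimedean field is nonempty if and only if its Berkovich analytification is, and (c) nonemptiness of schemes is preserved under field extension, this yields surjectivity of $f^{\Ber}$ if and only if surjectivity of $f$. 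For the adic setting, I would invoke \autoref{thm:comparision}: since the Hausdorff quotient map $X^{\an} \to [X^{\an}] \simeq X^{\Ber}$ is surjective, surjectivity of $f^{\an}$ is equivalent to surjectivity of $f^{\Ber}$.

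For part (2), flatness is a stalk-local property. The crucial input is that for any analytic point $x$ of $X^{\Ber}$ (resp.~$X^{\an}$) lying over the scheme point $x_0 \in X$, the natural ring map $\OO_{X,x_0} \to \OO_{X^{\Ber},x}$ (resp.~$\OO_{X,x_0} \to \OO_{X^{\an},x}$) is faithfully flat, with analogous statements for $Y$; this may be extracted from \cite[Proposition 2.6.2]{BerkovichEtaleCohomology} for Berkovich spaces, and from \cite[Proposition 1.7.11]{huber} together with \autoref{thm:comparision} for adic spaces. Given this, flatness of $f$ propagates to flatness of the analytification by base change. Conversely, assuming the analytification is flat, fix $x_0 \in X$ with $y_0 = f(x_0)$, select an analytic preimage $x$ of $x_0$ (existence granted by part (1) applied to $\mathrm{id}\colon X\to X$), and then descend flatness of $\OO_{Y,y_0} \to \OO_{X^{\Ber},x}$ along the faithfully flat extension $\OO_{X,x_0} \to \OO_{X^{\Ber},x}$ to deduce flatness of $\OO_{Y,y_0} \to \OO_{X,x_0}$.

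The main obstacle I anticipate is the careful handling of the descent step in the converse of (2), and ensuring uniformly that every relevant property transfers across the equivalences between the rigid analytic, Berkovich, and adic categories recalled in \autoref{thm:comparisionrigidadic} and \autoref{thm:comparision}. Once these have been pinned down, the lemma reduces to invoking the stalk-level faithful flatness together with the surjectivity of the point map from part (1).
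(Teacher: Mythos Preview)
Your approach is considerably more explicit than the paper's, which simply cites \cite[Proposition 3.4.6]{BerkovichSpectral} for the Berkovich statements and, for the adic statements, \cite[p.~487 (b) and Lemma 1.1.10.(iii)]{huber} together with \autoref{lemma:morphismpartiallyproper}. Your fiber-by-fiber argument for (1) in the Berkovich case and your stalkwise faithful-flatness argument for (2) unpack what those citations actually say; in particular your descent argument for the converse of (2) is clean and correct.

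There is, however, a real gap in your adic treatment of (1). You claim that surjectivity of $f^{\an}$ and of $f^{\Ber}$ are equivalent because the Hausdorff-quotient map $X^{\an}\to[X^{\an}]\simeq X^{\Ber}$ is surjective, but this only gives the implication $f^{\an}$ surjective $\Rightarrow f^{\Ber}$ surjective. For the converse, take $y\in Y^{\an}$ and a preimage $\bar x\in X^{\Ber}$ of its image $\bar y\in Y^{\Ber}$; any lift $x_0\in X^{\an}$ of $\bar x$ only guarantees that $f^{\an}(x_0)$ and $y$ have the same image in $Y^{\Ber}$, i.e.\ the same maximal (rank-one) generalization. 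To hit $y$ itself you need specializations to lift along $f^{\an}$, and this is precisely what partial properness of $f^{\an}$ (the content of \autoref{lemma:morphismpartiallyproper}) provides---which is why the paper invokes it. A smaller caveat: in the forward direction of (2), ``by base change'' glosses over the fact that a commuting square of local rings with flat top and faithfully flat verticals need not have flat bottom; you also need compatibility of analytification with fiber products, so that $\OO_{X^{\an},x}$ is flat over (a localization of) $\OO_{X,x_0}\otimes_{\OO_{Y,y_0}}\OO_{Y^{\an},y}$.
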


\begin{proof}
The first (resp.~second) statement for $f^{\an}$ follows from \autoref{lemma:morphismpartiallyproper} and \cite[p.~487, part (b)]{huber} (resp.~\cite[Lemma (1.1.10.(iii)]{huber}). The statements for $f^{\Ber}$ can be found in \cite[Proposition 3.4.6]{BerkovichSpectral}. 
\end{proof}

\begin{lemma}\label{lemma:flatopen}
Let $f\colon X\to Y$ be a locally of finite type morphism of varieties over $K$, and let $f^{\an}\colon X^{\an} \to Y^{\an}$ (resp.~$f^{\Ber}\colon X^{\Ber} \to Y^{\Ber}$) denote the associated morphism of adic spaces  (resp.~Berkovich $K$-analytic spaces). 
If $f$ is flat, then $f^{\an}$ (resp.~$f^{\Ber}$) is flat and partially proper  (resp.~boundaryless) and open. 
\end{lemma}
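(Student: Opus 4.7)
The plan is to assemble this lemma from the pieces already in hand, with the only genuinely new input being the fact that a flat morphism of (suitable) non-Archimedean analytic spaces is open.

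First I would dispense with the partial properness and boundarylessness: since $f$ is locally of finite type between varieties over $K$, this is exactly the conclusion of \autoref{lemma:morphismpartiallyproper}, applied with no change to both $f^{\an}$ and $f^{\Ber}$. Next, the flatness of $f^{\an}$ and $f^{\Ber}$ follows immediately from \autoref{lemma:surjectiveflat}(2), which already transfers flatness across analytification in both categories. So after these two invocations I am reduced to the openness assertion.

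For openness, my plan is to use the standard fact that a flat morphism of analytic spaces (under the mild finiteness hypotheses in play here) is open. In the Berkovich setting, this is part of Berkovich's treatment of flatness for $K$-analytic spaces: a flat morphism between good strict $K$-analytic spaces is open, which one can find in Ducros' memoir on families of analytic spaces (extending the classical complex-analytic statement). In the adic setting, the corresponding result follows from \cite[Proposition 1.7.8]{huber} (flatness implies that the image of an open set is open, given the local finiteness and tautness hypotheses we have imposed throughout), or alternatively one can deduce openness of $f^{\an}$ from openness of $f^{\Ber}$ via the equivalence of \autoref{thm:comparision} together with the fact that the map from an adic space to its universal Hausdorff quotient is a quotient map for the topology (so open sets pull back and push forward correctly across the comparison).

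The main obstacle I anticipate is purely a bookkeeping one, namely pinning down the openness statement in the exact adic-space category used in the paper (taut, locally strongly Noetherian, locally of finite type over $\Spa(K,K^{\circ})$), since the cleanest references in the literature are stated either for Berkovich spaces or for rigid analytic spaces. The safest route is therefore to prove openness first for $f^{\Ber}$ using Berkovich's/Ducros' result, and then transport it to $f^{\an}$ through \autoref{thm:comparision}, noting that openness of a map between taut adic spaces locally of finite type over $\Spa(K,K^{\circ})$ is equivalent to openness of the induced map on universal Hausdorff quotients because the quotient map $\sX \to [\sX]$ is itself open and surjective.
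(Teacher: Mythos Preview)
Your approach is essentially identical to the paper's: it too invokes \autoref{lemma:morphismpartiallyproper} and \autoref{lemma:surjectiveflat}(2), then cites Ducros' memoir on families for openness of flat Berkovich maps and a reference in Huber's book for the adic side. One small correction on the bookkeeping point you anticipated: \cite[Proposition 1.7.8]{huber} is the openness statement for \emph{\'etale} morphisms, not flat ones; the paper instead points to \cite[p.~425]{huber} (together with the partial properness already established) for openness in the adic setting.
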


\begin{proof}
This follows from \autoref{lemma:morphismpartiallyproper}, \autoref{lemma:surjectiveflat}.(2), and \cite[Theorem 9.2.3 and Remark 9.2.4]{Ducros:Families} and \cite[p.~425]{huber}. 
\end{proof}

\subsection{The sheaf of meromorphic functions on an adic space}
To conclude this preliminaries section, we define the sheaf of meromorphic functions on a taut, locally strongly Noetherian adic space over $\Spa(K,K^{\circ})$. 
We follow the exposition of Bosch \cite{bosch1982meromorphic}. 

First, we define the sheaf locally for a strongly Noetherian affinoid adic space.  
For any ring $R$, denote by $Q(R)$ its total ring of fractions. 
Let $\sX = \Spa(A,A^{+})$ be an affinoid adic space where $A$ is strongly Noetherian. 
Let $\sU \subset \sX$ be a rational subset. 
By \cite[(II.I.IV)]{huber2}, the restriction map $\sO_{\sX}(\sX) \to \sO_{\sX}(\sU)$ is flat, and hence it gives us a homomorphism between the corresponding total rings of fractions. 
Moreover, we can define a presheaf $\sM_{\sX}$ on the set of rational subset $\sU$ of $\sX$ via
\[
\sM_{\sX}(\sU) = Q(\sO_{\sX}(\sU)).
\]

\begin{lemma}\label{lemma:localsheafofdenominators}
Let $f = g/h \in Q(\sO_{\sX}(\sX))$ where $g,h \in \sO_{\sX}(\sX)$ and $h$ is not a zero-divisor. 
Consider the presheaf $(\sO_{\sX} : f)$ which associates with each rational subset $\sU \subset \sX$, the $\sO_{\sX}(\sU)$-ideal
\[
(\sO_{\sX}(\sU) : f) := \brk{a \in \sO_{\sX}(\sU) : af\in \sO_{\sX}(\sU)}.
\]
Then, $(\sO_{\sX} : f)$ is a coherent $\sO_{\sX}$-module.
\end{lemma}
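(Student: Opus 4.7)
My plan is to realize $(\sO_{\sX}:f)$ as the kernel of an explicit morphism between two coherent $\sO_{\sX}$-modules, in the spirit of Bosch \cite{bosch1982meromorphic} in the classical rigid-analytic setting; coherence of the presheaf then follows automatically from the fact that kernels of maps between coherent sheaves are coherent.

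Since $A$ is strongly Noetherian, the foundational work of Huber implies that $\sO_{\sX}$ is a coherent sheaf of rings, every finitely generated ideal of $A$ defines a coherent ideal sheaf on $\sX$, and quotients by coherent ideals are coherent $\sO_{\sX}$-modules. In particular, $h\sO_{\sX}$ is a coherent ideal sheaf, and so the quotient $\sO_{\sX}/h\sO_{\sX}$ is a coherent $\sO_{\sX}$-module. I would then introduce the morphism of coherent sheaves obtained by multiplying by $g$ and projecting modulo $h$:
\[
\varphi\colon \sO_{\sX}\xrightarrow{\,\cdot g\,}\sO_{\sX}\longrightarrow \sO_{\sX}/h\sO_{\sX},
\]
and set $\scrK := \ker(\varphi)$, which is a coherent $\sO_{\sX}$-module.

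The next step is to identify the presheaf $(\sO_{\sX}:f)$ with $\scrK$ on rational subsets. The key technical input is the flatness of the restriction map $\sO_{\sX}(\sX)\to\sO_{\sX}(\sU)$ for every rational subset $\sU\subset\sX$, recorded in \cite[(II.I.IV)]{huber2} and already used to define $\sM_{\sX}$. Flat base change preserves non-zero-divisors, so the image of $h$ in $\sO_{\sX}(\sU)$ remains a non-zero-divisor and the induced map $Q(\sO_{\sX}(\sX))\to Q(\sO_{\sX}(\sU))$ sends the class $f=g/h$ to $g/h$. Consequently, for $a\in\sO_{\sX}(\sU)$ the condition $af\in\sO_{\sX}(\sU)$ is equivalent to $ag\in h\sO_{\sX}(\sU)$, which is precisely the statement $a\in\scrK(\sU)$. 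This yields the equality of presheaves $(\sO_{\sX}:f)=\scrK$, manifestly compatible with restrictions, and shows that $(\sO_{\sX}:f)$ is in fact a sheaf and coherent.

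The only step that requires genuine care is verifying that $h$ remains a non-zero-divisor after any rational restriction; once this is granted via flatness, the remainder is a formal consequence of the coherence of $\sO_{\sX}$ and the stability of coherent modules under kernels. I therefore do not expect any substantive obstacle in carrying out the argument.
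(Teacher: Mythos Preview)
Your proposal is correct and is precisely the argument the paper has in mind: the paper's proof consists of the single sentence ``The proof is identical to that from \cite[Lemma 2.1]{bosch1982meromorphic},'' and your kernel-of-multiplication-by-$g$-modulo-$h$ argument, together with the flatness of rational localization to ensure $h$ stays a non-zero-divisor, is exactly Bosch's proof transported to the strongly Noetherian adic setting.
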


\begin{proof}
The proof is identical to that from \cite[Lemma 2.1]{bosch1982meromorphic}. 
\end{proof}

\begin{lemma}\label{lemma:localsheaf}
The presheaf $\sM_{\sX}$ defines a sheaf on $\sX$. 
\end{lemma}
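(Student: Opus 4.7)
The plan is to follow Bosch's strategy from \cite{bosch1982meromorphic}, verifying the two sheaf axioms separately for a rational cover $\{\sU_i\}$ of a rational subset $\sU \subset \sX$, and exploiting at every step the flatness of the restriction maps $\sO_\sX(\sU) \to \sO_\sX(\sU_i)$ together with the coherence statement from \autoref{lemma:localsheafofdenominators}.

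For separation, I would write a given element $f \in Q(\sO_\sX(\sU))$ as $g/h$ with $h$ a non-zero-divisor. Flatness of the restriction maps implies that $h|_{\sU_i}$ remains a non-zero-divisor in $\sO_\sX(\sU_i)$, so if $f$ vanishes in each $Q(\sO_\sX(\sU_i))$, then $g|_{\sU_i} = 0$ for every $i$; the sheaf property of $\sO_\sX$ then forces $g = 0$, hence $f = 0$.

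For gluing, compatible sections $f_i \in Q(\sO_\sX(\sU_i))$ make the coherent ideal sheaves $(\sO_{\sU_i} : f_i)$ agree on overlaps, so they patch into a coherent ideal sheaf $\sI \subset \sO_\sU$, together with a coherent subsheaf $\sJ \subset \sO_\sU$ given locally by $f_i \cdot (\sO_{\sU_i} : f_i)$ and an $\sO_\sU$-linear isomorphism $\phi \colon \sI \to \sJ$ given locally by multiplication by $f_i$. Since $\sU$ is a strongly Noetherian affinoid adic space, the standard equivalence between coherent sheaves and finitely generated modules converts this into finitely generated ideals $I, J \subset B := \sO_\sX(\sU)$ and a $B$-linear isomorphism $\phi \colon I \to J$. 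Picking any non-zero-divisor $h \in I$ produces $f := \phi(h)/h \in Q(B)$, which is independent of the choice of $h$ and, using flatness once more so that $h|_{\sU_i}$ stays a non-zero-divisor, restricts to $f_i$ on each $\sU_i$.

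The main obstacle is therefore the production of a global non-zero-divisor in $I$. Since $B$ is Noetherian, its zero-divisors form the union of the finitely many associated primes $\fp_1, \ldots, \fp_r$, so by prime avoidance it suffices to show $I \not\subset \fp_k$ for each $k$. I would argue by contradiction: if $\fp_k = \mathrm{ann}(a)$ for some non-zero $a \in B$ and $I \subset \fp_k$, then for every index $i$ we have $I \cdot \sO_\sX(\sU_i) \subset \fp_k \sO_\sX(\sU_i)$, and by flatness this ideal annihilates $a|_{\sU_i}$. But $I \cdot \sO_\sX(\sU_i) = (\sO_\sX(\sU_i) : f_i)$ contains a denominator of $f_i$, which is a non-zero-divisor, forcing $a|_{\sU_i} = 0$ for each $i$; the sheaf property of $\sO_\sX$ then yields $a = 0$, a contradiction.
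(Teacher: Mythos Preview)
Your proof is correct and follows essentially the same strategy as the paper's own proof, which is a sketch tracking \cite[Lemma~2.2]{bosch1982meromorphic}: glue the coherent denominator ideals $(\sO_{\sU_i}:f_i)$ into a global coherent ideal, extract a global non-zero-divisor $h$, and use it to build $f=g/h$. The paper attributes the existence of such an $h$ to ``the Noether decomposition theorem'' without further comment; your explicit argument via associated primes and prime avoidance (pushing the annihilator down by flatness and invoking the sheaf axiom for $\sO_{\sX}$ to kill $a$) is precisely the elaboration that justifies this step, and your separate treatment of the separation axiom fills in what the paper leaves as ``can be shown to be unique.''
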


\begin{proof}
The proof follows in the same manner as \cite[Lemma 2.2]{bosch1982meromorphic}, and so we will only offer a sketch. 
Let $\brk{\sU_1,\dots,\sU_n}$ denote an open cover $\sX = \Spa(A,A^{+})$ by rational subsets. 
Consider the functions $f_i \in \sM_{\sX}(\sU_i)$ such that $f_{i_{| \sU_i \cap \sU_j}} = f_{j_{| \sU_i \cap \sU_j}}$ for all $1\leq i,j\leq n$. 
By \autoref{lemma:localsheafofdenominators}, we can construct the coherent $\sO_{\sU}$-ideal $(\sO_{\sU_i} : f_i)$. 
These ideals will coincide on all intersections $\sU_i \cap \sU_j$ and hence will glue together to form a coherent $\sO_{\sX}$-module, which we denote by $\sJ$. 
Using the Noether decomposition theorem, we can see that not all elements of $\sJ(\sX)$ are zero-divisors in $\sO_{\sX}(\sX)$. 
Let $h \in \sJ(\sX)$ denote such an element. 
Now $(h_{|\sU_i})f_i \in \sO_{\sX}(\sU_i)$ for all $i$, and since they coincide on $\sU_i \cap \sU_j$, they glue to define a function $g\in \sO_{\sX}(\sX)$. 
Therefore, we can define a global element $f = g/h \in \sM_{\sX}(\sX)$ that restricts to $f_i$ on $\sU_i$ and can be shown to be unique. 
\end{proof}

Using the above results, we can globalize this construction. 
\begin{definition}
Let $\sX$ be a taut, locally strongly Noetherian adic space over $\Spa(K,K^{\circ})$. 
We define the \cdef{sheaf of meromorphic functions on $\sX$} as follows. 
Consider an affinoid open cover $\brk{\sU_i}$ of $\sX$ where each $\sU_i = \Spa(A_i,A_i^+)$ for $A_i$ a strongly Notherian ring. 
By \autoref{lemma:localsheaf}, we can define the sheaf $\sM_{\sU_i}$ for each $\sU_i$ and glue these together to get a global sheaf $\sM_{\sX}$. 
\end{definition}

\begin{remark}
The stalk at $x\in \sX$ is $Q(\sO_{\sX,x})$. Any non-zero-divisor germ $h\in \sO_{\sX,x}$ can be extended to a function on an affinoid neighborhood $\sU$ of $x$ such that $h$ is not a zero-divisor in $\sO_{\sX}(\sU)$. 
\end{remark}

\begin{definition}
For each global meromorphic function $f\in \sM_{\sX}(\sX)$, the coherent $\sO_{\sX}$-ideal $(\sO_{\sX} : f)$ can be constructed via \autoref{lemma:localsheafofdenominators}, and we will call this ideal sheaf the \cdef{ideal of denominators of $f$}. 
\end{definition}

We now define the indeterminacy locus of a global meromorphic function $f \in \sM_{\sX}(\sX)$. 
To do so, we briefly recall \cite[(1.4.1)]{huber}, which illustrates how one can define a closed adic subspace associated with a coherent $\sO_{\sX}$-ideal $\sI$. 
Let 
\[
V(\sI) := \brk{x\in \sX : \sI_{x} \neq \sO_{\sX,x}}
\]
and $\sO_{V(\sI)} := (\sO_{\sX}/\sI)_{|V(\sI)}$. 
Note that for every $x\in \sX$, the support of $v_x$ is equal to the maximal ideal $\mathfrak{m}_x$ of $\sO_{\sX,x}$. 
By definition $\sI_{y} \subset \fm_{y}$ for every $y \in V(\sI)$, and the valuation $v_y$ induces a valuation $v_{y}'$ of $\sO_{V(\sI),y}.$ 
By considering the quotient topology, one has that for every open affinoid subspace $\sV \subset \sX$, the mapping $\sO_{\sX}(\sV) \to \sO_{V(\sI)}(V(\sI) \cap \sV)$ is a quotient map. 
Moreover, the triple
\[
V(\sI) := (V(\sI),\sO_{V(\sI)}, (v_y' : y \in V(\sI)))
\]
defines an adic space, which we call the \cdef{closed adic space associated with the $\sO_{\sX}$-ideal $\sI$}. 

We now return to our goal of defining the set of poles and zeros of a global meromorphic function. 

\begin{definition}
Let $f\in \sM_{\sX}(\sX)$. 
\begin{itemize}
\item The closed adic space $P_f := V((\sO_{\sX} : f))$ associated with the coherent $\sO_{\sX}$-ideal $(\sO_{\sX} : f)$ is called \cdef{the set of poles of $f $}. 
\item Similar to the sheaf of denominators, we can define the sheaf of numerators $(\sO_{\sX} : f)\cdot f$, which is a coherent $\sO_{\sX}$-ideal. 
The closed adic space $Z_f : = V((\sO_{\sX} : f)\cdot f)$ associated with the sheaf of numerators is called \cdef{the set of zeros of $f$}. 
\item The \cdef{indeterminacy locus} of $f$ is defined as the intersection $P_f \cap Z_f$. Note that by Hilbert's nullstellensatz, $f\in \sO_{\sX}(\sX)$ if and only if $P_f = \emptyset$. 
\end{itemize}
\end{definition}

We want to show that $\sM_{\sX}(\sX)$ is a field when $\sX$ is irreducible and reduced, and to prove this result, we will need a simple lemma.

\begin{lemma}\label{lemma:unitmeromorphic}
Let $\sX$ be a reduced, taut, locally strongly Noetherian adic space over $\Spa(K,K^{\circ})$, and let $f\in \sM_{\sX}(\sX)$. 
Then $f$ is a unit in $\sM_{\sX}(\sX)$ if and only if $Z_f$ does not contain a non-empty open subspace.
\end{lemma}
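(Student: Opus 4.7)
The plan is to argue locally on strongly Noetherian affinoid opens $\sU = \Spa(A,A^{+})$ of $\sX$ with $A$ reduced. For such $A$ with minimal primes $\fp_1,\ldots,\fp_n$, the total ring of fractions decomposes as $Q(A) = \prod_{i=1}^n A_{\fp_i}$ into a finite product of fields, so an element of $Q(A)$ is a unit precisely when it is a non-zero-divisor, equivalently when none of its components vanish. Writing $f_{|\sU} = g/h$ with $h$ a non-zero-divisor in $A$, a short computation identifies the ideal of numerators $(\sO_{\sU} : f_{|\sU})\cdot f_{|\sU}$ with the colon ideal $(gA : h) := \{b \in A : bh \in gA\}$, which always contains $(g)$. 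Consequently $Z_f \cap \sU \subseteq V(g)$, with $Z_f \cap \sU \supseteq V(\fp_i)$ whenever $\fp_i$ is a minimal prime containing $g$.

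For the forward implication, suppose $f$ is a unit in $\sM_{\sX}(\sX)$. Then locally $1/f = h/g$ is also meromorphic, so $g$ is a non-zero-divisor and lies in no minimal prime of $A$. Combined with $Z_f \cap \sU \subseteq V(g)$, the remaining input is the identity theorem for reduced adic spaces: a non-zero-divisor in $\sO_{\sX}(\sU)$ cannot vanish on a non-empty open subspace, for otherwise that open would meet some irreducible component in a Zariski-dense subset and force the function into the corresponding minimal prime. I would deduce this by transferring through the equivalence of \autoref{thm:comparision} to the Berkovich setting and invoking the classical identity principle for reduced strict Berkovich $K$-analytic spaces.

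For the converse, I argue by contrapositive: if $f$ is not a unit, then on some affinoid $\sU$ the local numerator $g$ is a zero-divisor in $A$, so $g \in \fp_i$ for some minimal prime $\fp_i$. The key algebraic step is that $(gA : h) \subseteq \fp_i$: for any $b$ with $bh \in gA \subseteq \fp_i$, the fact that $h \notin \fp_i$ (since $h$ is a non-zero-divisor in the reduced ring $A$) forces $b \in \fp_i$. Therefore $Z_f \cap \sU \supseteq V(\fp_i)$, and the generic part $V(\fp_i) \setminus \bigcup_{j \neq i} V(\fp_j)$ of this irreducible component is a non-empty open subspace of $\sU$. On a sufficiently small basic affinoid inside this generic open, each $\fp_j$ for $j \neq i$ becomes the unit ideal, and reducedness collapses $\fp_i$ to zero, so the inclusion truly is one of adic subspaces, witnessing a non-empty open subspace of $Z_f$.

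The main obstacle will be the rigorous invocation of the identity theorem for reduced adic spaces in the forward direction; the colon-ideal manipulations and the analysis of units in $Q(A) = \prod A_{\fp_i}$ follow Bosch's rigid analytic template in \cite{bosch1982meromorphic} almost verbatim, whereas transferring the identity principle to the adic category requires either the comparison afforded by \autoref{thm:comparision} or an independent stalk-level argument exploiting reducedness and strong Noetherianness of $A$.
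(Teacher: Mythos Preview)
Your proposal is correct and follows essentially the same route as the paper: reduce to a reduced strongly Noetherian affinoid $\Spa(A,A^{+})$, write $f=g/h$, and identify the statement with the equivalence ``$g$ is a non-zero-divisor in $A$ $\iff$ $V(g)$ contains no non-empty open,'' together with the elementary fact that $g/h$ is a unit in $Q(A)$ iff $g$ is a non-zero-divisor.

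The only real difference is packaging. The paper compares $Z_f$ and $V(g)$ geometrically, noting that they coincide off $V(h)$ and that $V(h)$ contains no open (since $h$ is a non-zero-divisor), so $Z_f$ contains an open iff $V(g)$ does. You instead compute the numerator ideal explicitly as the colon ideal $(gA:h)$ and use the minimal-prime decomposition of the reduced Noetherian ring $A$ to exhibit, when $g$ is a zero-divisor, an entire irreducible component $V(\fp_i)$ inside $Z_f$. Your version is more algebraically explicit and makes the ``identity theorem'' input (a non-zero-divisor cannot vanish on a non-empty open of a reduced affinoid) visible as the one nontrivial analytic input, whereas the paper simply asserts it; conversely, the paper's geometric comparison avoids the colon-ideal bookkeeping. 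Either way the substance is identical.
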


\begin{proof}
We may assume that $\sX = \Spa(A,A^{+})$ is affinoid and that $A$ is complete, and so we write $f = g/h$ with $g,h\in A$ such that $h$ is not a zero-divisor in $A$. 
We have that $Z_f \subset V(g)$ where $V(g)$ denotes the vanishing locus of $g$ and both $Z_f $ and $ V(g)$ coincide on the complement of $V(h)$. 
Since $h$ is not a zero-divisor, $V(h)$ does not contain a non-empty open subspace, and moreover, we have that $Z_f$ contains an open subspace if and only if $V(g)$ contains an open subspace. 
However, this only happens if $g$ is a zero-divisor in $A$, and hence the result follows.
\end{proof}

We now record a useful corollary, which is the adic analogue of \cite[Corollary 2.5]{bosch1982meromorphic}. 

\begin{corollary}\label{coro:meromorphicfield}
If $\sX$ is an irreducible, reduced, taut, locally strongly Noetherian adic space over $\Spa(K,K^{\circ})$, then $\sM_{\sX}(\sX)$ is a field. 
\end{corollary}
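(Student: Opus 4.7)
The plan is to invoke \autoref{lemma:unitmeromorphic}: given $0 \neq f \in \sM_\sX(\sX)$, it suffices to exhibit that $Z_f$ does not contain a non-empty open subspace, and then $f^{-1}$ exists in $\sM_\sX(\sX)$. I will argue by contradiction, deriving from a non-empty open $\sU \subset Z_f$ a contradiction with the irreducibility and reducedness of $\sX$ together with the non-vanishing of $f$.

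First I would pass to a local affinoid picture. Choose an affinoid cover of $\sX$ by reduced, strongly Noetherian affinoids $\sV_i = \Spa(A_i,A_i^+)$, and write $f|_{\sV_i} = g_i/h_i$ with $g_i,h_i \in A_i$ and $h_i$ a non-zero-divisor. Because $f \neq 0$ as a section of the sheaf $\sM_\sX$, at least one $g_{i_0}$ is non-zero in $A_{i_0}$. From the construction of $(\sO_\sX : f) \cdot f$ in \autoref{lemma:localsheafofdenominators}, this coherent ideal restricts to $(g_i)$ on the dense open $\sV_i \setminus V(h_i)$ (density uses reducedness, since a non-zero-divisor in a reduced Noetherian ring avoids every minimal prime). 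Consequently $Z_f$ meets $\sV_i \setminus V(h_i)$ in the classical vanishing locus of $g_i$.

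Suppose now that $Z_f$ contains a non-empty open $\sU$. Pick $\sV_i$ with $\sU \cap \sV_i \neq \emptyset$; then $g_i$ vanishes on the non-empty open subspace $(\sU \cap \sV_i) \setminus V(h_i)$ of $\sV_i$. I would then invoke an analytic-continuation property for irreducible reduced adic spaces to deduce that $g_i = 0$ on all of $\sV_i$, and hence $f|_{\sV_i} = 0$. Propagating across overlaps using the sheaf axiom for $\sM_\sX$ and the same vanishing argument on each $\sV_j$ (with $\sU$ replaced by an open on which $f$ has already been shown to be zero, together with irreducibility to ensure $\sV_j$ meets such an open non-trivially), one obtains $f = 0$ globally, contradicting the hypothesis.

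The main obstacle is establishing the analytic-continuation statement from the version of irreducibility recorded in Section~\ref{sec:nonArchspaces}: if $g \in A_i$ vanishes on a non-empty open of $\sV_i$ but not identically, one must manufacture a decomposition of $\sX$ as the disjoint union of two proper closed adic subspaces. The natural candidates are the closed adic subspace cut out by the coherent ideal generated by $g$ (extended to $\sX$ via gluing) and a complementary closed adic subspace supported on the open where $g \neq 0$; showing that these are genuinely disjoint as adic subspaces and cover $\sX$ is the technical heart of the argument, and is the adic-space analogue of Bosch's Identit\"atssatz in \cite{bosch1982meromorphic}. Once this identity theorem is in hand, the corollary is a formal consequence of \autoref{lemma:unitmeromorphic}.
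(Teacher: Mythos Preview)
Your approach uses the same two ingredients as the paper---\autoref{lemma:unitmeromorphic} together with irreducibility---but you take a significant detour. The paper's proof is a single line: $Z_f$ is a \emph{global} closed adic subspace of $\sX$, so if it contains a non-empty open, irreducibility forces $Z_f = \sX$; since $\sX$ is reduced, the ideal of numerators must then be zero and hence $f=0$, contradicting $f\neq 0$. There is no need to pass to an affinoid cover, argue locally, or propagate across overlaps.

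Your local argument in fact has a gap: you want an identity theorem on each $\sV_i = \Spa(A_i,A_i^+)$, concluding from ``$g_i$ vanishes on a non-empty open of $\sV_i$'' that $g_i = 0$. But an affinoid open of an irreducible adic space need not itself be irreducible, so this step does not follow without further input. The ``main obstacle'' you describe---manufacturing two proper closed subspaces of $\sX$ from a partially-vanishing $g_i$---is precisely the statement that a proper closed subspace of an irreducible space has empty interior, and that statement is most cleanly applied directly to $Z_f$ itself rather than to local data. Once you notice that $Z_f$ is already the global closed object you need, the affinoid and propagation steps become superfluous.
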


\begin{proof}
This follows from \autoref{lemma:unitmeromorphic} and the fact that a closed adic subspace $\sY$ of an irreducible adic space $\sX$ which contains a non-empty open subspace must be the entire space i.e., $\sY = \sX$. 
\end{proof}

When $\sX$ is irreducible and reduced, any morphism $\sX \to \mP^{1,\an}$ that is not identically equal to $\infty$ is naturally identified with a meromorphic function $f\in \sM_{\sX}(\sX)$. We will need to know when such a morphism will separate points. 

\begin{lemma}\label{lemma:separatingpoints}
Suppose $\sX$ is an irreducible, reduced, separated, taut, locally strongly Noetherian adic space over $\Spa(K,K^{\circ})$.  
Let $x,y\in \sX$ be two distinct points. 
Then, there exists a meromorphic function $f\in \sM_{\sX}(\sX)$ such that $f(x) \neq f(y)$, where we consider $f$ as a morphism from $\sX \to \mP^{1,\an}$. 
\end{lemma}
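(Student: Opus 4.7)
The plan is to construct the meromorphic function locally on an affinoid neighborhood of $x$ and then extend to a global meromorphic function on $\sX$ via the sheaf property of $\sM_\sX$ established in \autoref{lemma:localsheaf}.

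First, since $\sX$ is locally spectral (hence sober) and $x \neq y$, at least one of the two points admits an open neighborhood not containing the other; after swapping the roles of $x$ and $y$ if necessary, I may assume there exists an open $\sV \ni x$ with $y \notin \sV$. As affinoid opens form a basis for the topology of $\sX$, I shrink $\sV$ to an affinoid open $\sU = \Spa(A, A^{+}) \subseteq \sV$ containing $x$, so that $y \notin \sU$ as well. Irreducibility and reducedness of $\sX$ pass to the open subspace $\sU$, so $A$ is a reduced Noetherian domain. Letting $\mathfrak{m}_x \subset A$ denote the maximal ideal corresponding to $x$, I pick $g \in \mathfrak{m}_x \setminus \{0\}$, so that $1/g \in Q(A) = \sM_\sU(\sU)$ is a meromorphic function on $\sU$ with a pole at $x$.

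The main step is to extend $1/g$ to a global meromorphic function $f \in \sM_\sX(\sX)$ that is holomorphic at $y$: viewed as a morphism $\sX \to \mP^{1,\an}$, such an $f$ then satisfies $f(x) = \infty \neq f(y)$. I would realize this extension by associating with $g$ the $\sO_\sX$-ideal $\sI$ given by $\sI|_\sU = (g) \cdot \sO_\sU$ and $\sI|_{\sX \setminus V(g)} = \sO_{\sX \setminus V(g)}$; these two definitions agree on the overlap $\sU \setminus V(g)$ because $g$ is a unit there, so by \autoref{lemma:localsheafofdenominators} they glue to a coherent $\sO_\sX$-ideal. Being locally principal and generated by the non-zero-divisor $g$, the ideal $\sI$ is invertible, and the desired $f$ can be realized as a ratio of two sections of the associated line bundle $\sI^{-1}$, with pole divisor supported in $V(g) \subset \sU$, so that $f$ has a pole at $x$ but is holomorphic at $y$.

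The main obstacle is the compatibility of this construction across an affinoid cover of $\sX$: a priori, the restriction of $1/g$ to an overlap $\sU \cap \sU_i$ with another affinoid $\sU_i$ need not correspond to an element of $Q(\sO_\sX(\sU_i))$, since meromorphic functions on dense opens do not automatically extend. To overcome this, I would exploit the flatness of restriction maps between rational subsets (discussed in \autoref{sec:nonArchspaces}) and use \autoref{lemma:unitmeromorphic} to verify that the constructed $f$ is genuinely nonzero with the prescribed pole/zero behaviour; the intrinsic nature of the invertible ideal $\sI$ under change of affinoid chart ensures that the gluing succeeds. Once $f \in \sM_\sX(\sX)$ is in hand, we have $f(x) = \infty \neq f(y)$ in $\mP^{1,\an}$, which proves the lemma.
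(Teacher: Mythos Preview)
Your approach constructs a local meromorphic function and then attempts to glue it to a global one, but this is more complicated than necessary and the gluing step you yourself flag as ``the main obstacle'' is a genuine gap. Your proposed resolution via an invertible ideal sheaf is too sketchy: $V(g)$ is closed in the affinoid $\sU$ but need not be closed in $\sX$, so $\{\sU,\,\sX\setminus V(g)\}$ is not obviously an open cover of $\sX$; and you never specify which ``two sections of $\sI^{-1}$'' you take a ratio of, nor why that ratio lands in $\sM_\sX(\sX)$ rather than merely being a rational section of a line bundle.

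The paper sidesteps all gluing by the observation that for irreducible, reduced $\sX$ the ring $\sM_\sX(\sX)$ is a field (\autoref{coro:meromorphicfield}) and every local ring $\sO_{\sX,x}$ injects into it. Hence any germ at a point is \emph{automatically} a global meromorphic function --- no extension argument is required. The paper then invokes separatedness to argue that the subrings $\sO_{\sX,x}$ and $\sO_{\sX,y}$ of $\sM_\sX(\sX)$ differ, picks $f\in\sO_{\sX,x}$ with $f\notin\sO_{\sX,y}$, and concludes $f(x)\neq\infty=f(y)$. The injection $\sO_{\sX,x}\hookrightarrow\sM_\sX(\sX)$ is precisely the simplification you are missing; once you see it, your entire second and third paragraphs become unnecessary.
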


\begin{proof}
By the valuative criterion for separatedness \cite[Proposition 1.3.7]{huber}, we have that points of $\sX$ are separated, and in particular, we have that $(\sO_{\sX,x},\sO_{\sX,x}^+)\ncong (\sO_{\sX,y},\sO_{\sX,y}^+)$. 
Moreover, we choose a germ $f\in (\sO_{\sX,x},\sO_{\sX,x}^+)$ which is not in $(\sO_{\sX,y},\sO_{\sX,y}^+)$ and then $f(x) \neq f(y)$ where we say that $f(y) = \infty$ (as $f\notin (\sO_{\sX,y},\sO_{\sX,y}^+)$). 
Since $\sX$ is irreducible and reduced, $\sO_{\sX,x}$ is a subring of $\sM_{\sX}(\sX)$ and hence $f$ defines an element of $\sM_{\sX}(\sX)$. 
\end{proof}

\section{\bf Meromorphic maps between non-Archimedean analytic spaces}
\label{sec:meromorphicmaps}

In this section, we prove a non-Archimedean variant of a result of Remmert \cite[p.~333]{remmert1957holomorphic} concerning the codimension of the indeterminacy locus of a meromorphic map between certain $K$-analytic spaces. The proofs presented here were given to us by Brian Conrad.

\begin{theorem}\label{thm:meromorphicmap}
Let $K$ be an algebraically closed, complete, non-Archimedean valued field of characteristic zero. 
Let $\sX/K$ be a normal, taut rigid analytic space, and let $\sY/K$ be a proper, reduced, taut rigid analytic space.
The indeterminacy locus of any meromorphic map $\sX\dashrightarrow \sY$ is an analytic subset of codimension at least two. 
\end{theorem}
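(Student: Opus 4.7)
The approach I would take is to pass to the graph of the meromorphic map and then reduce the theorem to the following key statement: a proper, surjective, bimeromorphic morphism onto a normal taut rigid analytic space has exceptional locus of codimension at least two in the target. First, let $U \subset \sX$ denote the maximal domain of definition of $f \colon \sX \dashrightarrow \sY$, form the graph $\Gamma_f \subset \sX \times \sY$ as the analytic closure of the graph of $f|_U$, and note that since $\sY$ is proper over $K$, the first projection $\pi_1 \colon \Gamma_f \to \sX$ is proper; moreover $\pi_1$ is an isomorphism over $U$ and hence bimeromorphic. The indeterminacy locus of $f$ is then precisely the closed analytic subset of $\sX$ over which $\pi_1$ has a fiber that is not a single reduced point, so the theorem will follow from the key statement applied to $g = \pi_1$.

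I would prove the key statement by contradiction. Suppose the exceptional locus $E \subset \sX$ of a proper bimeromorphic $g \colon \sZ \to \sX$ contains an irreducible component $C$ of codimension $1$. Since $\sX$ is normal, Serre's criterion gives regularity in codimension $1$, so one may choose a point $x_0$ in the smooth locus of $C$ at which $\sX$ is itself regular and at which $C$ is cut out locally by a single parameter $t$ on an affinoid neighborhood $\sV$ of $x_0$. The restriction $g \colon g^{-1}(\sV) \to \sV$ is then a proper, bimeromorphic morphism which is an isomorphism over $\sV \setminus C = \{t \neq 0\}$. Applying the adic valuative criterion of properness to the rank-one valuation associated to the Weil divisor $C$ at $x_0$, the unique lift of the inclusion $\{t \neq 0\} \hookrightarrow \sV$ through $g$ extends uniquely across $\{t = 0\}$, producing a single well-defined point of the fiber $g^{-1}(x_0)$ and contradicting the assumption $x_0 \in E$.

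The main obstacle I anticipate is making the valuative-criterion step precise in the adic/rigid category, since one needs to pass from a codimension-$1$ generic-point argument to an honest morphism from a rigid analytic disc. Concretely, near $x_0$ one must produce a transverse rigid disc $\sD \hookrightarrow \sV$ so that the restriction of $g$ over the punctured disc $\sD \setminus \{0\}$ factors through $g^{-1}(\sV)$, and then use the adic analogue of the valuative criterion (as in Huber's framework, cited earlier in the paper) to extend the factorization to the origin. Checking that this extension really forces $g^{-1}(x_0)$ to be a single point — rather than just providing one of possibly many points of the fiber — requires combining bimeromorphy with connectedness of fibers of proper birational maps to normal spaces, i.e., an analytic analogue of Zariski's Main Theorem, which I expect to be the technically delicate ingredient and likely explains why the proof was communicated by Conrad.
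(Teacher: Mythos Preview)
Your setup via the graph is the same as the paper's, but the core argument diverges and contains a genuine gap that you yourself flag without resolving. The valuative criterion of properness only tells you that a given lift of the punctured disc extends \emph{uniquely} across the puncture; it says nothing about the dimension of the fiber $g^{-1}(x_0)$. Having a unique section through $x_0$ is perfectly compatible with $g^{-1}(x_0)$ being a curve or worse --- the section just picks out one point of it. So the step ``unique extension $\Rightarrow$ single point in the fiber $\Rightarrow$ $x_0 \notin E$'' does not go through. Your final paragraph recognizes this and appeals to an analytic Zariski Main Theorem (connected fibers plus quasi-finiteness), but that is precisely the substantive content you are being asked to supply, not a black box you can invoke.

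The paper's proof attacks exactly this point, but by dimension-counting rather than the valuative criterion. It first shows (Lemma~3.4) that \emph{every} fiber of $\pr_1$ over the indeterminacy locus $\sZ$ has positive dimension; the key input here is a hands-on affinoid argument that if some fiber were finite, then locally $\pr_1$ would be finite, and a finite birational map to a normal affinoid domain is an isomorphism (this is the ZMT-type step, proved directly via a trace/degree-one computation). Then (Proposition~3.6) it assumes a codimension-one component $\sZ'$ exists, reduces to $\sZ$ cut out by a single function $f$, and observes that $V(\pr_1^*f)_{\red} \to \sZ$ is a proper surjection between spaces of the same pure dimension $d-1$; a separate fiber-dimension lemma (Lemma~3.5) then forces some fiber of this map to be finite, contradicting Lemma~3.4. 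The argument never uses the valuative criterion.
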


First, we recall the definition of meromorphic mapping following Remmert. 

\begin{definition}\label{defn:meromorphic}
Let $\sX$ and $\sY$ be reduced rigid analytic spaces. A \cdef{meromorphic map} $\varphi\colon \sX \dashrightarrow \sY$ is an analytic subset $\sE\subset \sX \times \sY$ which is mapped properly onto $\sX$ by the projection map $\pr_1\colon \sX \times \sY \to \sX$ of $\sX \times \sY$ onto the first factor such that outside a nowhere dense analytic set $\sZ \subset \sX$ this map is bi-holomorphic. Moreover, $\pr_1^{-1}(\sZ)$ is nowhere dense in $\sE$, and we call the set $\sE$ the \cdef{graph of $\varphi$}. 
\end{definition}

\begin{remark}
We now explain the reason for calling $\sE$ the graph of $\varphi$. 
Let $\sX$ be a reduced rigid analytic space, and let $\sU,\sV$ be open subsets of $\sX$. 
Suppose we are given two maps $\varphi_1\colon \sU \to \sY$ and $\varphi_2\colon \sV\to\sY$ that coincide on the intersection $\sU \cap \sV$. Then the closure of the graph of $\varphi_1$ coincides with the closure of the graph of $\varphi_2$, and this graph defines  a meromorphic map $\varphi$ as in \autoref{defn:meromorphic}. 
\end{remark}

For the remainder of this section, let $\sX/K$ be a normal rigid analytic space, let $\sY/K$ be a proper, reduced rigid analytic space, let $\varphi\colon \sX \dashrightarrow \sY$ be a meromorphic map, and let $\sE$ denote the graph of $\varphi$. 
By \autoref{defn:meromorphic}, the morphism
\begin{equation}\label{eqn:morphism}
\sE \setminus \pr_1^{-1}(\sZ) \longrightarrow \sX \setminus \sZ
\end{equation}
is an isomorphism. 
We first note that there is a unique minimal $\sZ$ with the above properties and its formation is Tate-local on $\sX$. We may and do assume that $\sZ$ is minimal. 
Our goal is to show that $\sZ$ in $\sX$ has codimension at most $2$ everywhere along $\sZ$. Observe that this assertion is vacuously true when $\sZ$ is empty. Since the formation of $\sZ$ is Tate-local, we may assume that $\sX$ is affinoid, then connected, and hence irreducible, so $\sX \setminus \sZ$ is also irreducible. 
Therefore by \eqref{eqn:morphism} and the nowhere-density of $\pr_1^{-1}(\sZ)$ in $\sE$, we have that $\sE$ is irreducible. 
The image of $\pr_1\colon \sE \to \sX$ is an analytic set which contains the non-empty, Zariski open $\sX \setminus \sZ$, so it has non-empty interior in $\sX$, and since $\sX$ is irreducible, we have that this image must coincide with $\sX$ and hence the fibers of $\pr_1$ are non-empty. 

Since $\sE$ is equidimensional (by irreducibility), its pure dimension is the same as that of the irreducible affinoid $\sX$ because we may determine the dimension using $\sX \setminus \sZ$ and $\sE \setminus \pr_1^{-1}(\sZ)$. 
If there exists a section $s\colon \sX \to \sE$, then $\sX \to \sE \to \sY$ is an actual map which agrees on $\sX \setminus \sZ$ with the given meromorphic map $\varphi$, and then the minimality of $\sZ$ will imply that $\sZ$ is empty. 

We now study the fibers of the surjective morphism $\pr_1\colon \sE \to \sX$ over $\sZ$. 
Before doing so, we recall a result concerning proper morphisms.

\begin{lemma}\label{lemma:locusproper}
Let $f\colon \sX \to \sS$ be a proper map of rigid analytic spaces. Then the locus of points in $\sS$ over which the fiber of $f$ is finite is an admissible open.
\end{lemma}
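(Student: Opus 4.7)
The plan is to show that if the fiber $f^{-1}(s_0)$ over some point $s_0 \in \sS$ is finite, then $f$ is already finite over an admissible open neighborhood of $s_0$; this will immediately imply that the locus of points with finite fiber is admissible open. The assertion is local on $\sS$, so I would reduce to the case where $\sS$ is affinoid.

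First, I would write $f^{-1}(s_0) = \{x_1,\dots,x_n\}$ and choose pairwise disjoint affinoid neighborhoods $\sU_i$ of $x_i$ inside $\sX$. The complement $\sC := \sX \setminus \bigsqcup_i \sU_i$ is closed in $\sX$, and by properness of $f$ its image $f(\sC) \subset \sS$ is a closed analytic subset not containing $s_0$. Setting $\sV := \sS \setminus f(\sC)$ produces an admissible open neighborhood of $s_0$ satisfying the crucial containment $f^{-1}(\sV) \subset \bigsqcup_i \sU_i$, and $f$ restricts to a proper morphism $f_i\colon \sU_i \cap f^{-1}(\sV) \to \sV$ whose fiber over $s_0$ is the single point $x_i$.

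Next, I would invoke Kiehl's proper mapping theorem to conclude that each pushforward $(f_i)_*\sO_{\sU_i \cap f^{-1}(\sV)}$ is a coherent $\sO_\sV$-module. Because the fiber of $f_i$ over $s_0$ consists of the single Artinian point $x_i$, proper base change for coherent sheaves on rigid analytic spaces shows that the fiber of this sheaf at $s_0$ is a finite-dimensional $\kappa(s_0)$-vector space. The rigid-analytic Stein factorization of $f_i$ then provides a finite morphism $\sX_i' \to \sV$ through which $f_i$ factors with connected fibers; because the fiber of $f_i$ over $s_0$ is a single reduced point, the connected-fiber map is an isomorphism there, and by coherence plus Nakayama it remains an isomorphism over an admissible affinoid neighborhood of $s_0$ in $\sV$. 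Shrinking $\sV$ accordingly, the map $f_i$, and hence the disjoint union $f|_{f^{-1}(\sV)}$, becomes a finite morphism, which in particular has finite fibers everywhere on $\sV$.

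The main obstacle is this last spreading step, namely the rigid-analytic avatar of ``proper plus quasi-finite at a point equals finite in a neighborhood.'' It requires combining Kiehl's coherence theorem with Nakayama's lemma and Stein factorization in a Grothendieck-topology-aware manner, since one cannot naively appeal to classical point-set openness. Once this step is in place, admissibility of the final neighborhood $\sV$ is automatic from the admissibility of complements of closed analytic subsets together with the local nature of the assertion on $\sS$.
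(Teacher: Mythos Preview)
Your argument has a genuine gap at the step where you assert that $f(\sC)$ is a closed analytic subset of $\sS$. In rigid analytic geometry, the complement $\sC = \sX \setminus \bigsqcup_i \sU_i$ of a finite union of affinoid subdomains is \emph{not} a closed analytic subspace; it is at best some admissible open (and even that requires an argument). Kiehl's proper mapping theorem, and every rigid-analytic version of ``proper maps take closed sets to closed sets,'' applies only to closed \emph{analytic} subvarieties, i.e.\ zero loci of coherent ideal sheaves. There is no honest underlying topology on a rigid space in which properness gives you topological closedness of images, so the complex-analytic reflex of taking a closed complement and pushing it forward does not go through. Consequently you have no mechanism for producing the admissible open neighborhood $\sV$ with $f^{-1}(\sV)\subset\bigsqcup_i\sU_i$, and the rest of the argument (Kiehl, Stein factorization, Nakayama) never gets off the ground.

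The paper avoids this obstacle by passing to the associated Berkovich spaces, where proper really does mean topologically proper for an honest Hausdorff topology. It then invokes \cite[Corollary 3.3.11]{BerkovichSpectral}, which directly furnishes an open $\sU\subset\sS^{\Ber}$ over which all fibers of $f^{\Ber}$ are finite; one then shrinks $\sU$ to a strict rational affinoid domain $M(B)$, and the corresponding $\Sp(B)$ is the desired admissible open in $\sS$. In effect, Berkovich's result already packages the ``separate off the fiber and use topological properness'' step that you are trying to carry out by hand. If you want to repair your approach without citing Berkovich, you would need to supply a rigid-analytic substitute for that topological step, which is precisely the nontrivial content here.
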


\begin{proof}
We may first assume that $\sS = \Sp(A)$ is affinoid, so $\sX$ is quasi-compact and separated. 
Now consider the associated map $f^{\Ber}\colon \sX^{\Ber}\to \sS^{\Ber} = M(A)$ of Berkovich spaces. 
By \autoref{lemma:equivalenceproperfinite}.(1), the notions of properness are equivalent, and so we have that $f^{\Ber}$ is proper. 

If $s\in \sS$ has fiber $\sX_s$ that is finite, then likewise $f^{\Ber}$ has analytic fiber that is finite, hence a finite set, see  \autoref{lemma:equivalenceproperfinite}.(2).
By \cite[Corollary 3.3.11]{BerkovichSpectral}, we get an open $\sU$ in $\sS^{\Ber}$ over which $f^{\Ber}$ has finite fibers. 
In $M(A)$, a base of neighborhoods around any point is provided by rational affinoid domains, and we can arrange it to be strict since $M(A)$ is strict. 
Thus, we can find a rational affinoid $\Sp(B)$ in $\Sp(A)$ so that $M(B)$ is contained in $\sU$ and contains $s$. 
We have that $\Sp(B)$ is an admissible open in $\Sp(A)$ around $s$ over which $f$ has finite fibers (since every fiber $\sX_t$ is a quasi-compact and separated $\kappa(t)$-analytic space with $(\sX_t)^{\Ber}$ over $M(\kappa(t)$) identified with the fiber of $f^{\Ber}$ over $t$ in $M(A)$, and $\sX_t \to \Sp(\kappa(t))$ is finite if and only if the Berkovich space over $M(\kappa(t))$ is finite (in the analytic sense), see again \autoref{lemma:equivalenceproperfinite}.(2). 
\end{proof}

\begin{lemma}\label{lemma:positivedimfibers}
All fibers of the surjection $\pr_1\colon \sE \to \sX$ over $\sZ$ have positive dimension.
\end{lemma}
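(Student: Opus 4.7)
The plan is to argue by contradiction. Suppose there exists $z\in\sZ$ with $\dim\pr_1^{-1}(z)=0$. Since $\pr_1$ is proper by \autoref{defn:meromorphic} and a zero-dimensional fiber of a proper map of rigid analytic spaces is finite, \autoref{lemma:locusproper} produces an admissible open $\sV\subset\sX$ containing $z$ over which $\pr_1$ has finite fibers. Shrinking if necessary, we may take $\sV=\Sp(A)$ to be a connected affinoid neighborhood of $z$, so that $A$ is a normal domain (as $\sX$ is normal and irreducible), and set $\pi:=\pr_1|_{\pr_1^{-1}(\sV)}\colon \pr_1^{-1}(\sV)\to \sV$. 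Then $\pi$ is proper with finite fibers, hence finite, so $\pr_1^{-1}(\sV)=\Sp(B)$ for a reduced finite $A$-algebra $B$ (reducedness coming from $\sE$).

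I will show that $\pi$ is an isomorphism, by a rigid analytic Zariski Main Theorem argument. The key input is that $\pi$ restricts to an isomorphism over the dense open $\sV\setminus\sZ$. Any connected component of $\pr_1^{-1}(\sV)$ is admissible open in $\sE$, and an admissible open of $\sE$ contained entirely in the nowhere dense set $\pr_1^{-1}(\sZ)$ must be empty, so every component of $\pr_1^{-1}(\sV)$ meets the locus $\pi^{-1}(\sV\setminus\sZ)$ where $\pi$ is already an isomorphism. Since $\sV\setminus\sZ$ is connected, this forces $\pr_1^{-1}(\sV)$ to be connected, and in fact irreducible (as a non-empty open of the irreducible $\sE$), whence $B$ is a domain. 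The isomorphism over $\sV\setminus\sZ$ implies that the induced map of total rings of fractions $Q(A)\to Q(B)$ is an isomorphism, so $B\hookrightarrow Q(A)$; since $B$ is a finite extension of the normal domain $A$ inside $Q(A)$, normality of $A$ forces $B=A$ and hence $\pi$ is an isomorphism.

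Inverting $\pi$ yields an analytic section $s\colon\sV\to\sE$ of $\pr_1$ over $\sV$. Composing with the second projection gives an analytic morphism $\pr_2\circ s\colon\sV\to\sY$ that agrees with $\varphi$ on the dense open $\sV\setminus\sZ$. By the minimality of $\sZ$ recorded in the paragraph preceding the lemma, this forces $\sV\cap\sZ=\emptyset$, contradicting $z\in\sV\cap\sZ$. Therefore every fiber of $\pr_1$ over $\sZ$ has positive dimension.

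The main obstacle is the isomorphism step, which reduces to the algebraic assertion that a finite reduced extension $A\to B$ with the same total ring of fractions, where $A$ is a normal affinoid domain, must be trivial. Making this work requires the irreducibility reduction above, which relies crucially on the part of \autoref{defn:meromorphic} asserting that $\pr_1^{-1}(\sZ)$ is nowhere dense in $\sE$; this is also where we use that $\sE$ is reduced rather than merely a set-theoretic analytic subset.
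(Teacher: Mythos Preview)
Your overall strategy coincides with the paper's: assume a finite fiber, use \autoref{lemma:locusproper} to shrink to a connected normal affinoid $\sV=\Sp(A)$ over which $\pi$ is finite, show the finite map $\Sp(B)\to\Sp(A)$ is an isomorphism, and then invoke minimality of $\sZ$. The endgame via normality ($B$ integral over $A$ inside $Q(A)$ forces $B=A$) is exactly the paper's, though the paper phrases the degree-$1$ step through completions at a closed point away from $\sZ$ rather than through your assertion $Q(A)\cong Q(B)$; your assertion is correct but deserves a line of justification (pick a nonempty affinoid subdomain $\Sp(A')\subset\sV\setminus\sZ$; then $A\to A'$ is flat and injective, and $B\otimes_A A'=A'$, whence $[Q(B):Q(A)]=1$).

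There is, however, a genuine gap at the irreducibility step. Your justification ``irreducible (as a non-empty open of the irreducible $\sE$)'' is false in rigid geometry: an admissible open of an irreducible rigid space need not be irreducible, and connectedness does not rescue this. For instance, on the rigid analytification of an irreducible nodal curve, a small connected affinoid around the node has two analytic branches and is reducible. So from ``$\pr_1^{-1}(\sV)$ is connected'' you cannot conclude that $B$ is a domain. The paper closes this gap differently: it observes that the Zariski-open $\pi^{-1}(\sV\setminus\sZ)\subset\Sp(B)$ is \emph{irreducible} (being isomorphic to $\sV\setminus\sZ$, a nonempty Zariski-open of the irreducible normal affinoid $\sV$) and \emph{dense} (since $\pr_1^{-1}(\sZ)$ is nowhere dense in $\sE$, hence its trace on the admissible open $\pr_1^{-1}(\sV)$ is nowhere dense there); a rigid space containing a dense irreducible Zariski-open is itself irreducible, so $B_{\red}$ is a domain. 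Inserting this argument in place of your parenthetical fixes the proof. (As a minor point, the paper works with $B_{\red}$ rather than assuming $\sE$ is reduced; either convention is fine once the irreducibility is established correctly.)
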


\begin{proof}
Suppose for the sake of contradiction that there is some $z\in \sZ \subset \sX$ which has a finite fiber. 
By \autoref{lemma:locusproper}, we have that the locus of points in $\sX$ over which is the fiber is finite is an admissible open. 
By passing to a connected affinoid neighborhood of such a $z$, we may and do assume that all of the fiber of $\pr_1$ are finite and $\sX  = \Sp(A) $ for a normal domain $A$ (since connected and normal implies irreducible and reduced). 
However, since $\pr_1$ is proper and quasi-finite, it is also finite, and hence $\sE$ is also affinoid, say $\sE = \Sp(B)$. 
Therefore, $\pr_1$ is a finite surjection $\Sp(B) \to \Sp(A)$ that restricts to an isomorphism over the complement of $\sZ$. 

Recall that $\sZ$ in $\Sp(A)$ and $\pr_1^{-1}(\sZ)$ in $\Sp(B)$ are nowhere-dense, so the irreducibillity of $\sE \setminus \pr_1^{-1}(\sZ)$ in $\sE$ forces $\sE$ to be irreducible. 
As such, we have that $B_{\red}$ is a domain and the surjective map $\Sp(B_{\red}) \to \Sp(A)$ is an isomorphism over $\Sp(A) \setminus \sZ$. We will show that the induced map on affinoid algebras $A \to B_{\red}$ is an isomorphism.

Once we have shown this, the inverse map $B_{\red}\to A$ will define a morphism $\Sp(A)\to \Sp(B_{\red}) \to \Sp(B)$ that is a section to $\pr_1\colon \Sp(B) \to \Sp(A)$ because the composition of these two maps agrees with the identity away from $\sZ$ and hence is the identity as $\sZ$ is nowhere-dense in the reduced $\Sp(A)$. 
However, we noted before \autoref{lemma:locusproper} that the existence of a section allows us to show that $\sZ$ is empty, which contradicts the existence of $z\in \sZ$. 

We now return to showing that the module finite map $A \to B_{\red}$ between Noetherian domains is an isomorphism. 
Note that this map is injective, and so by normality of $A$, the map is an isomorphism if the induced map of fraction fields has degree 1. 
Let $d$ denote the degree of the map of fraction fields. 
We know that $\Spec(B_{\red}) \to \Spec(A)$ is finite flat of degree $d$ over some non-empty Zariski open $V$, and by the Jacobson property of $A$ \cite[6.1.1/3]{BGR}, there is a closed point $t$ which is in $V$ and also away from the proper closed set corresponding to the ideal of $\sZ$. 
Moreover, $A\to B_{\red}$ induces a finite flat map of degree $d$ after completion at the maximal ideal of $t$, but that map coincides with the map upon completion at $t$ arising from the finite analytic map $\Sp(B_{\red}) \to \Sp(A)$. Recall that this latter map is an isomorphism over the complement of $\sZ$ and hence upon the completion at $t$, and therefore, we have that $d=1$. 
\end{proof}

To complete the proof, we need to show that the irreducible components of $\sZ$ have codimension at least two. 
We will need the following lemma. 

\begin{lemma}\label{lemma:finitefibers}
If $q\colon \sT \to \sS$ is a proper surjective map between $K$-analytic spaces that are equidimensional with the same dimension $d \geq 0$, then every non-empty admissible open in $\sS$ contains a point over which the fiber is finite.
\end{lemma}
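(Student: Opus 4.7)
The plan is to reduce to an affinoid base and then combine upper semi-continuity of fiber dimension with a direct dimension count on irreducible components. Given a non-empty admissible open $\sV \subset \sS$, I would shrink $\sV$ to a non-empty affinoid $\sV_0 = \Sp(A)$ contained in $\sV$. The preimage $\sT_0 := q^{-1}(\sV_0)$ is an admissible open in $\sT$, hence equidimensional of dimension $d$, and the restriction $q_0 := q|_{\sT_0} \colon \sT_0 \to \sV_0$ is proper (as a base change of $q$) and surjective (as $q$ is surjective and $\sV_0$ is open in $\sS$). It therefore suffices to exhibit a single point of $\sV_0$ whose $q_0$-fiber is finite.

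Next, I would argue that the generic fiber of $q_0$ is $0$-dimensional. Pick an irreducible component $\sC \subset \sV_0$; by surjectivity of $q_0$ together with equidimensionality of $\sT_0$, some irreducible component $\sC' \subset \sT_0$ maps properly and dominantly onto $\sC$, and both $\sC$ and $\sC'$ have dimension $d$. The fiber dimension formula for dominant morphisms of irreducible $K$-analytic spaces (verifiable via a transcendence-degree computation on the function fields of the affinoid irreducible components, or equivalently via the Berkovich-side dimension theory transferred through \autoref{thm:comparisionrigidadic} and \autoref{thm:comparision}) then forces the generic fiber of $\sC' \to \sC$ to have dimension $0$. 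Running this argument across all irreducible components of $\sV_0$ and of $\sT_0$, one obtains a non-empty Zariski-open subset of $\sV_0$ over which all fibers of $q_0$ have dimension $0$.

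Combined with upper semi-continuity of fiber dimension for morphisms of rigid analytic spaces, this shows that the locus $\sZ' \subset \sV_0$ where the fiber of $q_0$ has dimension $\geq 1$ is a proper analytic subset. Its complement $\sV_0 \setminus \sZ'$ is thus a non-empty Zariski-open in $\sV_0$, and at any point $s \in \sV_0 \setminus \sZ'$ the fiber $q_0^{-1}(s)$ is a proper $K$-analytic space of dimension $0$, hence a finite set (one also recovers this a posteriori via \autoref{lemma:locusproper}, which guarantees that the finite-fiber locus is admissible open). This produces the required point inside the original $\sV$. The main obstacle is formulating upper semi-continuity of fiber dimension and the fiber dimension formula in sufficient generality for taut rigid analytic spaces over $K$, especially when the source or target is reducible; this should be manageable by transferring standard Berkovich-theoretic dimension results through the comparison theorems cited above.
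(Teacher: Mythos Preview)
Your approach is correct in spirit and takes a genuinely different route from the paper. The paper argues by induction on $d$: after reducing (via normalization) to the case where $\sT$ and $\sS$ are irreducible and $\sS$ is affinoid, it picks a point $u$ and a nonzero non-unit $f$ vanishing at $u$, passes to the hypersurface slices $\sU' = V(f)$ and $\sW' = V(q^*f)$, checks these are again equidimensional of dimension $d-1$, and applies the inductive hypothesis. Your argument instead works directly with the irreducible decomposition of $\sT_0$ over an affinoid $\sV_0$: components dominating a component of $\sV_0$ have generically $0$-dimensional fibers by the fiber dimension formula, while the remaining components have image a proper analytic set that one simply avoids. Your route is shorter and avoids the normalization and slicing steps, but it purchases this brevity by importing the fiber dimension formula and (a weak form of) upper semi-continuity of fiber dimension in the rigid analytic category. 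You correctly flag this as the main obstacle, and indeed the needed inputs are available via Ducros's Berkovich-side dimension theory (cf.\ the paper's citation of \texttt{ducros\_variation}) transported through \autoref{thm:comparisionrigidadic} and \autoref{thm:comparision}; note also that the appeal to upper semi-continuity in your penultimate paragraph is redundant once you have already produced a non-empty Zariski open with $0$-dimensional fibers. The paper's inductive slicing argument, by contrast, is essentially self-contained and can be read as an \emph{ab initio} proof of the special case of the fiber dimension formula that you invoke.
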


\begin{proof}
The case of $d = 0$ is trivial, and we proceed by induction on $d$, so we assume that $d > 0$. 

The first step is to reduce to when $\sT$ and $\sS$ are each irreducible. We can precompose with the normalization of $\sT$ so that $\sT$ is normal with connected components $\sT_1,\dots,\sT_n$. 
Each $\sT_j$ has image that is an analytic set in an irreducible component of $\sS$.
Each irreducible component of $\sS$ must then be the image of some $\sT_j$ since $q$ is surjective, and the connected components $\sT_j$ which map onto an irreducible component of $\sS$ must factor through the normalization of that irreducible component (cf.~\cite[Theorem 2.2.4]{conrad-conn}). 

With this, we can express $q$ as a ``disjoint union" of two types of maps:~some of the $\sT_j$ map onto a connected component of the normalization of $\sS$ and some $\sT_j$ map onto a proper analytic set in an irreducible component of $\sS$. 
If we can prove our result for the first type, then by working over the Zariski-open in $\sS$ away from the images of the maps of the second type, we can conclude our result for the original $q$.
Since the normalization of $\sS$ is finite (and surjective), we can thereby reduce to the case where $\sT$ and $\sS$ are connected and normal.

Let $\sU =\Sp(A)$ be a connected affinoid in $\sS$, so it is irreducible. 
It is enough to find one fiber over $\sU$ that is finite. 
The connected components of $\sV = q^{-1}(\sU)$ are its irreducible components, and there are finitely many by the quasi-compactness of $q$. 
Note that at least one of these irreducible components maps onto $\sU$. 
By an argument similar to above with the $\sT_j$, we can find a point $u \in \sU$ that is only contained in the image of those components of $\sV$ which map onto $\sU$. 
Let $\sW$ be the union of those components. 

Pick a nonzero, non-unit $f\in A$ that vanishes at $u$. 
The pullback $f' = q^*(f)$ on $\sW$ defines an analytic set $\sW'$ mapping onto the zero locus $\sU'$ of $f$ in $\sU$. 
Endow $\sW'$ and $\sU'$ with reduced structures. 
Each of $\sW'$ and $\sU'$ is equidimensional of dimension $d-1$ since $\sU$ is irreducible and every irreducible (and hence connected) component of $\sW$ maps onto $\sU$. 
The map $q\colon \sW'\to \sU'$ satisfies the original hypotheses but with dimension $d-1$. 
Therefore, by induction, every non-empty admissible open in $\sU'$ has finite fiber in $\sW'$. 
Now, pick an open around $u$ which avoids the analytic images of the components of $\sV$ not part of $\sW$, so we get a finite fiber for $\sW'\to \sU'$ which is also a fiber for $q^{-1}(\sU')\to \sU'$, and so is a fiber of $\sV = q^{-1}(\sU) \to \sU$. 
\end{proof}

\begin{prop}\label{prop:codim}
Every irreducible component of $\sZ$ has codimension at least two. 
\end{prop}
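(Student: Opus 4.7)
The plan is to argue by contradiction: suppose some irreducible component $\sZ_0 \subseteq \sZ$ has codimension exactly one in $\sX$, so $\dim \sZ_0 = \dim \sX - 1$. Combining \autoref{lemma:positivedimfibers} with \autoref{lemma:finitefibers}, I will exhibit a point $z \in \sZ_0$ whose fiber under $\pr_1 \colon \sE \to \sX$ is finite, contradicting the positive-dimensionality of fibers over $\sZ$.

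First I would analyze the restricted map $\pr_1^{-1}(\sZ_0) \to \sZ_0$. Since $\pr_1$ is proper and surjective, this restriction is also surjective, so at least one irreducible component of $\pr_1^{-1}(\sZ_0)$ must dominate $\sZ_0$; call these the good components $\sE_0^{(1)},\dots,\sE_0^{(r)}$ and the remaining ones bad. Because $\pr_1^{-1}(\sZ)$ is nowhere dense in the irreducible space $\sE$ of dimension $\dim \sX$, every irreducible component of $\pr_1^{-1}(\sZ_0)$ has dimension at most $\dim \sX - 1 = \dim \sZ_0$, and the surjectivity of $\sE_0^{(i)} \to \sZ_0$ then forces equality $\dim \sE_0^{(i)} = \dim \sZ_0$ for each good $i$.

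Next I would produce a general point of $\sZ_0$ whose $\pr_1$-fiber is explained entirely by the good components. Let $\sU \subseteq \sZ_0$ be the complement of the union of the other irreducible components of $\sZ$ together with the images in $\sZ_0$ of the bad components of $\pr_1^{-1}(\sZ_0)$; each removed piece is a proper analytic subset of $\sZ_0$ (the images being analytic by properness of $\pr_1$), so $\sU$ is a non-empty admissible open. For any $z \in \sU$ a small neighborhood of $z$ in $\sX$ meets $\sZ$ only along $\sZ_0$, so $\pr_1^{-1}(z) \subseteq \pr_1^{-1}(\sZ_0)$, and by the construction of $\sU$ this fiber avoids every bad component, hence sits inside $\bigcup_{i=1}^r \sE_0^{(i)}$.

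Finally, I would apply \autoref{lemma:finitefibers} to each proper surjection $\sE_0^{(i)} \to \sZ_0$ of irreducible (hence equidimensional) $K$-analytic spaces of common dimension $\dim \sZ_0$, iterating inside $\sU$: the lemma guarantees that the finite-fiber locus of $\sE_0^{(i)} \to \sZ_0$ meets every non-empty admissible open, so one can successively shrink $\sU$ to a non-empty admissible open over which all good components simultaneously have finite fibers. At such a point $z$, the inclusion $\pr_1^{-1}(z) \subseteq \bigcup_i (\sE_0^{(i)})_z$ makes $\pr_1^{-1}(z)$ finite, contradicting \autoref{lemma:positivedimfibers}. The main obstacle is ensuring that the $\pr_1$-fiber over a general $z \in \sZ_0$ is really concentrated on the good components and not escaping into other pieces of $\pr_1^{-1}(\sZ)$; once this is pinned down via the admissible open $\sU$, the codimension-one scenario becomes geometrically untenable and delivers the required contradiction.
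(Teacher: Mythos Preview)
Your argument is correct and reaches the same contradiction as the paper, pitting \autoref{lemma:positivedimfibers} against \autoref{lemma:finitefibers}. The route, however, differs in how you arrange the equidimensional hypothesis needed for \autoref{lemma:finitefibers}. The paper first localizes so that $\sZ = \sZ_0$ is the only component, and then uses normality of $\sX$ to reduce further to the case where $\sZ$ is cut out by a single function $f$: then $\pr_1^{-1}(\sZ) = V(\pr_1^*f)$ is automatically pure of dimension $d-1$ inside the irreducible $\sE$, and a single application of \autoref{lemma:finitefibers} to $V(\pr_1^*f)_{\red} \to \sZ$ finishes. Your approach bypasses the principality reduction entirely: you bound $\dim \pr_1^{-1}(\sZ_0) \le d-1$ directly from nowhere-density, split into good and bad components, throw away the bad images, and iterate \autoref{lemma:finitefibers} (together with \autoref{lemma:locusproper} to know the finite-fiber loci are admissible opens so the shrinking makes sense) across the finitely many good components. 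This trades an algebraic localization step for a component-by-component bookkeeping argument; the paper's version is a bit cleaner since it produces a single map to which the lemma applies, while yours is more geometric and shows that the normality-based principality reduction is not strictly necessary at this stage. One small point worth making explicit in your write-up: the finiteness of the set of irreducible components of $\pr_1^{-1}(\sZ_0)$ (needed so that the bad images do not exhaust $\sZ_0$ and so that the iteration terminates) follows from quasi-compactness of $\sE$, which holds because $\sX$ has already been reduced to an affinoid and $\pr_1$ is proper.
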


\begin{proof}
Suppose for the sake of contradiction that $\sZ$ has an irreducible component $\sZ'$ of codimension $1$ in the connected normal affinoid $X = \Sp(A)$, in particular, we assume that $\sZ'$ has pure codimension $1$. 

First, we reduce to the case where $\sZ = \sZ'$. Pick $z'\in \sZ'$ not in any other irreducible component of $\sZ$, and let $\sW$ be a connected affinoid open in $\sX$ around $z'$ that is inside the Zariski-open complement of the finite union of the other irreducible components of $\sZ$. 
We can replace $\sX$ with such a $\sW$ (since the formation of $\sZ$ is local in $\sX$), and so we may and do assume that $\sZ$ is irreducible of pure codimension $1$. 

Next, we reduce to the case where $\sZ$ is defined by a principal ideal. 
We may assume that $\sZ$ is reduced, so $\sZ = \Sp(A/P)$ where $P$ is some prime ideal of $A$ with height $1$. 
Since $A$ is normal, we have that $A_P$ is a DVR, and so there is an affine open $\Spec(A_a)$ in $\Spec(A)$ containing $P$ for which $P_a$ is principal with generator $f \in A$. 
By looking at the map $\Sp(A) \to \Spec(A)$, we see that for the Zariski open $\sU = \brk{a \neq 0}$ in $\sX$, the intersection $\sZ \cap \sU$ is defined by the ideal generated by $f$. 
Note that $a$ is nonzero on $\Sp(A/P)$ because $a$ is not in $P$ since $\brk{P} \in \Spec(A_a)$. 
As such, we have that its sup-norm on $\Sp(A/P)$ is positive, and by replacing $a$ with some $ca^n$ for $c\in K^{\times}$ and $n$ a positive integer, we can arrange it so that this sup-norm is $1$. 
With this, we have that $\sV = \Sp(A\langle a \rangle)$ is an affinoid open in $\sX = \Sp(A)$ that meets $\sZ = \Sp(A/P)$, and $\sV \cap \sZ$ cannot equal $\sV$ since $\sZ$ has pure codimension $1$ in the irreducible $\sX$. We can replace $\sX$ with a connected component of $\sV$ that touches $\sZ$ so we retain all preceding properties and gain that the radical ideal of $\sZ$ in $\sX$ is principal, say $fA$. 

We know that $\sE$ is irreducible with the same pure dimension $d$ as the irreducible $\sX$ (so $d>0$ since $\sX$ has the irreducible subspace $\sZ$ with positive codimension), and the map $\pr_1\colon \sE \to \sX$ is surjective. 
Thus, the analytic function $f' \coloneqq \pr_1^*(f)$ on $\sE$ determines an analytic set in $\sE$ that does not exhaust the space (because its image in $\sX$ is $\sZ$ rather than $\sX$), and so it has pure dimension $d-1$. 
Moreover, the map of vanishing loci
\[
q\colon V(f')_{\red} \longrightarrow V(f) = \sZ
\]
is a proper surjection between analytic spaces each of pure dimension $d-1$. 
Now, \autoref{lemma:positivedimfibers} tells us that \textit{all} of the fibers of $q$ have position dimension, however \autoref{lemma:finitefibers} asserts that there fibers which are finite. 
Therefore, we have reached a contradiction to our original assumption that $\sZ$ has an irreducible component $\sZ'$ of codimension $1$, and hence our result follows. 
\end{proof}

\begin{proof}[Proof of \autoref{thm:meromorphicmap}]
This follows from \autoref{defn:meromorphic} and \autoref{prop:codim}. 
\end{proof}

To conclude this section, we show that the statement of \autoref{thm:meromorphicmap} carries over from rigid analytic to adic spaces. 
We note that \autoref{defn:meromorphic} works for to adic spaces \textit{mutatis mutandis}. 

\begin{prop}\label{prop:meromorphicmapadic}
Let $K$ be an algebraically closed, complete, non-Archimedean valued field of characteristic zero. 
Let $\sX$ be a normal, taut, locally of finite type adic space over $\Spa(K,K^{\circ})$, and let $\sY$ be a proper, reduced, taut, locally of finite type adic space over $\Spa(K,K^{\circ})$.
The indeterminacy locus of any meromorphic map $\sX\dashrightarrow \sY$ is an analytic subset of codimension at least two. 
\end{prop}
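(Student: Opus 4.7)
The plan is to reduce the adic statement to the rigid analytic statement already established in \autoref{thm:meromorphicmap} by transporting everything across the equivalence of categories recorded in \autoref{thm:comparisionrigidadic} and \autoref{thm:comparision}. Concretely, the equivalence of \autoref{thm:comparision} identifies the category of taut, locally of finite type adic spaces over $\Spa(K,K^{\circ})$ with the category of Hausdorff strict Berkovich $K$-analytic spaces, and \autoref{thm:comparisionrigidadic} further identifies the latter with the category of taut rigid analytic spaces over $\Sp(K)$. Composing these, I would let $\sX^{\mathrm{rig}}$ and $\sY^{\mathrm{rig}}$ denote the taut rigid analytic spaces corresponding to the adic spaces $\sX$ and $\sY$.

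The next step is to check that the hypotheses of \autoref{thm:meromorphicmap} hold for $\sX^{\mathrm{rig}}$ and $\sY^{\mathrm{rig}}$. The properties of being normal and reduced are formulated in terms of the local rings $\sO_{\sX,x}$ (see \autoref{defn:normal}), and the equivalences of \autoref{thm:comparisionrigidadic} and \autoref{thm:comparision} preserve the structure sheaves and their stalks (up to henselization/completion arguments standard in the literature), so $\sX^{\mathrm{rig}}$ inherits normality from $\sX$ and $\sY^{\mathrm{rig}}$ inherits reducedness from $\sY$. Properness of $\sY$ transfers to properness of the associated Berkovich space, and then by \autoref{lemma:equivalenceproperfinite}.(1) to properness of $\sY^{\mathrm{rig}}$.

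The final step is to transport the meromorphic map. A meromorphic map $\varphi\colon \sX \dashrightarrow \sY$ of adic spaces is, by the analogue of \autoref{defn:meromorphic}, given by a closed analytic subset $\sE \subset \sX \times \sY$ which is proper over $\sX$ via the first projection and which restricts to an isomorphism $\sE \setminus \pr_1^{-1}(\sZ) \to \sX \setminus \sZ$ off a nowhere-dense analytic subset $\sZ$. Under the categorical equivalences above, closed analytic subsets correspond to closed analytic subsets, fiber products match up, properness of morphisms transfers (\autoref{lemma:equivalenceproperfinite}.(1)), and nowhere-density — being expressible in terms of the analytic topology on the Hausdorff Berkovich avatar — is preserved. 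Hence $\sE$ corresponds to a rigid analytic subset $\sE^{\mathrm{rig}} \subset \sX^{\mathrm{rig}} \times \sY^{\mathrm{rig}}$ defining a meromorphic map $\sX^{\mathrm{rig}} \dashrightarrow \sY^{\mathrm{rig}}$ in the sense of \autoref{defn:meromorphic}, with indeterminacy locus corresponding to the indeterminacy locus of $\varphi$.

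Applying \autoref{thm:meromorphicmap} to $\sX^{\mathrm{rig}} \dashrightarrow \sY^{\mathrm{rig}}$ yields that its indeterminacy locus is an analytic subset of codimension at least two, and since the equivalences are dimension-preserving on analytic subsets (the local rings at classical points agree up to completion), the same conclusion holds for the indeterminacy locus of $\varphi$ in $\sX$. I expect the main subtlety to lie not in any deep geometric argument but in being careful that each notion used in \autoref{defn:meromorphic} — closed analytic subspace, properness of a projection, nowhere-density, and codimension — behaves well under the comparison theorems; once that bookkeeping is done the proof reduces to a single citation of \autoref{thm:meromorphicmap}.
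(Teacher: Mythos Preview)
Your proposal is correct and follows essentially the same approach as the paper: reduce to the rigid analytic case via the categorical equivalences and invoke \autoref{thm:meromorphicmap}. The only cosmetic difference is that the paper works directly with Huber's functor $\mathfrak{r}$ from rigid analytic spaces to adic spaces (citing specific results from \cite{huber} for the preservation of normality, properness, open immersions, and dimension), whereas you route through Berkovich spaces as an intermediary; the content is the same.
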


\begin{proof}
First, we recall that there is a functor $\mathfrak{r}$ from the category of rigid analytic spaces to the category of adic spaces, which induces an equivalence on certain subcategories (c.f.~\autoref{thm:comparisionrigidadic} and \autoref{thm:comparision}). 
We note that the image of a normal, taut (resp.~a proper, reduced, taut) rigid analytic space over $\Sp(K)$ via $\mathfrak{r}$ will be a normal, taut, locally of finite type (resp.~proper, reduced, taut, locally of finite type) adic space over $\Spa(K,K^{\circ})$ via \cite[(1.1.11) \& Remark 1.3.9.iv]{huber} and \autoref{defn:normal}. 
Next, we have that $\mathfrak{r}$ will map an open immersion of rigid analytic spaces to an open immersion of adic spaces (\textit{loc.~cit.~}(1.1.11.b)), that $\mathfrak{r}$ is fully faithful (\textit{loc.~cit.~}(1.1.11.d)), and that $\mathfrak{r}$ preserves dimensions (\textit{loc.~cit.~}(1.8.11.i)). 
The result now follows from \autoref{thm:meromorphicmap} and applying the functor $\mathfrak{r}$.
\end{proof}

\section{\bf On pseudo-$K$-analytically Brody hyperbolic varieties}
\label{sec:Brodyhyperbolic}

In this section, we offer a new definition of pseudo-$K$-analytically Brody hyperbolic which differs slightly from \cite{MorrowNonArchGGLV} and prove a result describing how one can test this notion.

To begin, we offer our new definition. 

\begin{definition} \label{defn:KBrodyMod}
Let $\sX$ be a $K$-analytic space and let $\sD\subset \sX$ be a closed subset. Then $\sX$ is \cdef{$K$-analytically Brody hyperbolic modulo $\sD$} (or:~\cdef{the pair $(\sX,\sD)$ is $K$-analytically Brody hyperbolic)} if
\begin{itemize}
\item every non-constant analytic morphism $\mathbb{G}_{m,K}^{\an} \to \sX$ factors over $\sD$, and
\item for every abelian variety $A$ over $K$ and every dense open subset $\sU\subset A^{\an}$ with $\mathrm{codim}(A^{\an}\setminus \sU)\geq 2$, every non-constant analytic morphism $\sU \to \sX$ factors over $\sD$. 
\end{itemize}
\end{definition}

\begin{definition}
A $K$-analytic space $\sX$ over $K$ is \cdef{pseudo-$K$-analytically Brody hyperbolic} if there is a proper closed subset $\sD \subsetneq \sX$ of $\sX$ such that $(\sX,\sD)$ is $K$-analytically Brody hyperbolic. 
\end{definition}

\begin{remark}\label{rmk:differentdefinitions}
\autoref{defn:KBrodyMod} differs from \cite[Definition 2.2]{MorrowNonArchGGLV} in that we require every non-constant \textit{analytic} morphism from a big \textit{analytic} open of the analytification of an abelian variety to factor over a proper closed subset, whereas \cite[Definition 2.2]{MorrowNonArchGGLV} only requires every non-constant {algebraic} morphism from a big algebraic open of an abelian variety to factor over a proper closed subset. 
In Subsection \ref{appendix}, we show that the results from \cite{MorrowNonArchGGLV} still hold with this new definition. 
\end{remark}

The goal of this section is to prove the following. 

\begin{theorem}\label{thm:equivalentBrody}
Let $\sX/K$ be an irreducible, reduced, separated $K$-analytic space and let $\sD\subset \sX$ be a closed subset. Then, $\sX$ is $K$-analytically Brody hyperbolic modulo $\sD$ if and only if for every connected, algebraic group $G/K$ and every dense open subset $\sU \subset G^{\an}$, every non-constant morphism $\sU \to \sX$ factors over $\sD$.
\end{theorem}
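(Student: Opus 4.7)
The direction ``(theorem's condition) $\Rightarrow$ $(\sX,\sD)$ is $K$-analytically Brody hyperbolic'' is immediate: the theorem's hypothesis, applied to $G=\mathbb{G}_{m}$ and $\sU=\mathbb{G}_{m,K}^{\an}$, gives the first bullet of \autoref{defn:KBrodyMod}, and applied to $G=A$ abelian with $\sU\subset A^{\an}$ the given codimension-$\geq 2$ open gives the second. For the converse, assume $(\sX,\sD)$ is $K$-analytically Brody hyperbolic and let $G$ be a connected algebraic group, $\sU\subset G^{\an}$ dense open, $f\colon\sU\to\sX$ non-constant analytic. I argue by contradiction: suppose $f$ does not factor through $\sD$ and fix $u_{0}\in\sU$ with $y_{0}:=f(u_{0})\notin\sD$. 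The plan is to use the group structure of $G$ to construct a non-constant morphism covered by \autoref{defn:KBrodyMod} whose image contains $y_{0}$, contradicting the hypothesis.

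My first step is to strengthen both bullets of \autoref{defn:KBrodyMod} to apply to arbitrary dense opens. For a non-constant $g\colon\sV\to\sX$ with $\sV\subset\mathbb{G}_{m,K}^{\an}$ dense open, the complement is $0$-dimensional; applying \autoref{prop:meromorphicmapadic} (after embedding $\sX$ into a proper ambient space, available in the algebraic settings of interest) extends $g$ to a meromorphic map whose indeterminacy locus has codimension $\geq 2$, hence is empty in the $1$-dimensional $\mathbb{G}_{m,K}^{\an}$. The resulting non-constant morphism $\tilde{g}\colon\mathbb{G}_{m,K}^{\an}\to\sX$ factors through $\sD$ by the first bullet of the hypothesis, and so does $g$. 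The same meromorphic-extension argument, applied on $A^{\an}$, upgrades the second bullet from codimension-$\geq 2$ dense opens of $A^{\an}$ to arbitrary dense opens of $A^{\an}$.

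Next, I would use the Chevalley structure theorem, which yields a short exact sequence $1\to L\to G\to A\to 1$ with $L$ linear and $A$ abelian, with quotient map $\pi\colon G^{\an}\to A^{\an}$. If $f$ is constant on every fibre of $\pi|_{\sU}$, then $f$ descends to a non-constant morphism $\bar{f}\colon\pi(\sU)\to\sX$ from the dense open $\pi(\sU)\subset A^{\an}$ (denseness follows from $\pi$ being surjective and open); the upgraded abelian case forces $\bar{f}$ to factor through $\sD$, contradicting $y_{0}\notin\sD$. Otherwise some fibre of $\pi|_{\sU}$ is a dense open of a translate of $L^{\an}$ on which $f$ is non-constant, reducing the problem to $G$ linear. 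In the linear case, characteristic-zero structure theory (unipotent radical and Levi decomposition) supplies non-trivial algebraic $1$-parameter subgroups $\phi\colon H\to G$ with $H\in\{\mathbb{G}_{m},\mathbb{G}_{a}\}$; using the exponential map of $\mathrm{Lie}(G)$ near $0$ and the fact that $f$ is not locally constant at $u_{0}$ (else $f$ is constant on the connected component of $\sU$ containing $u_{0}$, contradicting the non-constancy of $f$), one selects $\phi$ and a point in $G(K)$ so that $\psi(t)=u_{0}\cdot\phi(t)$ maps a dense open of $H^{\an}$ into $\sU$ with $f\circ\psi$ non-constant. The upgraded hypothesis (directly for $H=\mathbb{G}_{m}$, or, for $H=\mathbb{G}_{a}$, after a translation realising $\mathbb{G}_{m,K}^{\an}\subset\mathbb{A}^{1,\an}_{K}$ as a dense open) then forces $(f\circ\psi)(0)=y_{0}\in\sD$, the desired contradiction.

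The main obstacle lies in the first step: \autoref{prop:meromorphicmapadic} requires a proper target, which is not automatic for the general $K$-analytic $\sX$ in the statement. Overcoming this will require either a graph-closure argument leveraging the separatedness of $\sX$, or a reduction to the algebraic setting where compactifications are available. A subtler point in the Chevalley reduction is verifying analytically that constancy on every fibre of $\pi|_{\sU}$ yields a genuine analytic descent $\bar{f}$; once this and the meromorphic extensions are in hand, the combinatorics of the 1-parameter subgroup argument are standard in characteristic zero.
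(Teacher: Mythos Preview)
Your easy direction is fine. For the hard direction there are two genuine gaps, one of which you flag. The meromorphic-extension ``upgrade'' of both bullets really does fail for general $\sX$: \autoref{prop:meromorphicmapadic} needs a proper target, and an arbitrary irreducible reduced separated $K$-analytic space has no compactification to fall back on. The paper never invokes meromorphic extension in this proof. Second, the sentence ``if $f$ is constant on every fibre of $\pi|_{\sU}$, then $f$ descends to $\bar f\colon \pi(\sU)\to\sX$'' is not automatic in the analytic category; it is precisely the technical core of the paper's argument (\autoref{lem:affinefactorisation} and \autoref{lemma:extendmap}), proved by a trace construction on affinoids after a local finite-flat reduction (\autoref{lemma:proxyNoether}). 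Your one-parameter-subgroup step in the linear case is also shaky: there is no non-Archimedean analytic exponential with the properties you need, and for an algebraic $\phi\colon\mathbb{G}_m\to G$ through $u_0$ the intersection $\phi(\mathbb{G}_m^{\an})\cap\sU$ need not be dense in $\mathbb{G}_m^{\an}$ when $\sU$ is merely a dense open.

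The paper's route avoids all of this. The descent lemma above is the only analytic tool. For affine $G$ (\autoref{lemma:constantaffine}) one inducts on $\dim G$: for each proper closed connected subgroup $H\subset G$, the induction hypothesis makes $\varphi$ constant on generic fibres of $G^{\an}\to (G/H)^{\an}$, so the descent lemma yields $\varphi^*(\sM_{\sX}(\sX))\subset\sM(G^{\an})^{H^{\an}}$; since $G$ is generated by finitely many such $H_i$ (a group-theoretic fact from \cite{javanpeykarXie:Finitenesspseudogroupless}), one gets $\varphi^*(\sM_{\sX}(\sX))\subset\bigcap_i\sM(G^{\an})^{H_i^{\an}}=K$, hence $\varphi$ is constant by \autoref{lemma:separatingpoints}. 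For general $G$ (\autoref{prop:analyticpseudogroupless}) one applies the affine case fibrewise along the Chevalley quotient $G\to A$, descends via the same lemma to a big open of $A^{\an}$, and invokes the second bullet of \autoref{defn:KBrodyMod} directly; no upgrade to arbitrary dense opens is needed, because the image of a big open under the flat surjection $\pi^{\an}$ is again big (\autoref{lemma:flatopen}).
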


To prove \autoref{thm:equivalentBrody}, we follow the line of reasoning from \cite[Section 3.2]{javanpeykarXie:Finitenesspseudogroupless}, however, we need to transport several of the scheme theoretic arguments to the category of adic spaces.

\begin{lemma}\label{lemma:proxyNoether}
Let $\Spa(R,R^{\circ})$ and $\Spa(S,S^{\circ})$ be affinoid $K$-analytic spaces, and suppose $\Spa(R,R^{\circ})$ is irreducible and reduced. 
Let $\pi\colon \Spa(S,S^{\circ}) \to \Spa(R,R^{\circ})$ be a faithfully flat morphism. 
Then, there exists a dense affinoid subspace $\Spa(\tilde{R},\tilde{R}^{\circ})\subset \Spa(R,R^{\circ})$ and an affinoid subspace $\Spa(\tilde{S},\tilde{S}^{\circ})  \subset \Spa(S,S^{\circ}) $ such that the restricted map $\pi\colon\Spa(\tilde{S},\tilde{S}^{\circ}) \to \Spa(\tilde{R},\tilde{R}^{\circ})$ is finite and flat.
\end{lemma}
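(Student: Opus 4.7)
The plan is to combine Noether normalization for affinoid $K$-algebras with a generic freeness argument adapted to the affinoid setting.

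Since $R$ is a reduced, irreducible affinoid $K$-algebra, hence a Noetherian domain, Noether normalization produces a finite injection $T := K\langle x_1,\ldots,x_d\rangle \hookrightarrow R$ with $d = \dim R$. The composition $T \hookrightarrow R \to S$ makes $S$ into a faithfully flat $T$-algebra. Applying Noether normalization to $S$ as well and then cutting $\Spa(S)$ along an appropriate closed affinoid subspace, I first reduce to the situation in which $S$ is module-finite over $T$; this is the step that exploits the flexibility of only requiring $\Spa(\tilde{S},\tilde{S}^{\circ})$ to be some affinoid subspace of $\Spa(S,S^{\circ})$ rather than a dense one.

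Next, I invoke generic freeness over the Noetherian regular domain $T$: both $R$ and $S$ are finitely generated torsion-free $T$-modules, so after inverting some nonzero $f \in T$ they become finite free of constant rank. Geometrically, this identifies a Laurent rational subdomain $\sV \subset \Spa(T)$, which is in particular a dense open, over which the pullbacks of $R$ and $S$ are finite free. I then take $\Spa(\tilde{R},\tilde{R}^{\circ})$ to be the preimage of $\sV$ under the finite surjection $\Spa(R) \to \Spa(T)$, and $\Spa(\tilde{S},\tilde{S}^{\circ})$ to be the preimage of $\Spa(\tilde{R})$ inside the reduced $\Spa(S)$. These preimages are affinoid by standard properties of rational subdomains under finite maps, $\Spa(\tilde{R})$ is dense in $\Spa(R)$ because the finite map $\Spa(R) \to \Spa(T)$ is dimension-preserving, and over the common base $\sV$ both $\tilde{R}$ and $\tilde{S}$ are finite free. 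Therefore $\pi\colon \Spa(\tilde{S}) \to \Spa(\tilde{R})$ is finite and remains flat under restriction to open subspaces.

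The main obstacle I anticipate is the bookkeeping required to make the intermediate reduction rigorous: namely, choosing a closed affinoid subspace of $\Spa(S)$ which still maps surjectively (or at least onto a dense subset of $\Spa(R)$) while simultaneously trimming $S$ enough to become module-finite over $T$. I also expect that some care is needed to transfer the classical algebraic generic freeness statement over $\Spec(T)$ into one over the Tate algebra $T$, so that the locus of freeness really corresponds to a rational affinoid subdomain and not merely a Zariski-open subset; this should follow from flatness of $T$ along the algebraic localization at the generic point, but it is the technical heart of the argument and will require careful invocation of properties of affinoid subdomains, along the lines of \cite[\S 1.4]{huber}.
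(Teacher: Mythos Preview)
Your overall strategy --- reduce to a module-finite situation over a Tate algebra and then invoke generic freeness --- is reasonable in spirit, but there are two genuine problems, one of which you flag and one of which you do not.

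The gap you acknowledge is the crux of the whole lemma. An \emph{absolute} Noether normalization of $S$ gives you finiteness over some unrelated Tate algebra $K\langle y_1,\dots,y_e\rangle$, and slicing by values of the $y_i$ has no reason to produce a closed subspace whose image in $\Spa(R)$ is still dense. Already for $R=K\langle x\rangle$, $S=K\langle x,y\rangle$ with $\pi$ the projection, slicing along $x=0$ rather than $y=0$ collapses the image to a point. What one actually needs is a \emph{relative} normalization: a local factorization $\Spa(S)\supset V \to \mathbb{A}^d_{\Spa(R)}$ with the map quasi-finite, so that cutting by the fiber coordinates $T_1,\dots,T_d$ automatically lands finitely over an affinoid open of the base. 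This is precisely what the paper extracts from Ducros's results on variation of fiber dimension, followed by a boundary/interior argument in the Berkovich category to upgrade quasi-finite to finite near a point. So filling your ``bookkeeping'' step would essentially reproduce the paper's proof rather than bypass it.

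The problem you do not flag is the flatness claim at the end. Once you pass to a \emph{closed} affinoid subspace $S'\subset S$, the original faithful flatness of $\pi$ is lost; your phrase ``remains flat under restriction to open subspaces'' does not apply, because the restriction went through a closed immersion. Nor does having $\tilde R$ and $\tilde S$ both finite free over a common base $\sV\subset\Spa(T)$ force $\tilde S$ to be flat over $\tilde R$: with $T=K\langle t\rangle$, $R=T[x]/(x^2-t)$ and $S'=R/(x)\cong T$, both are free over $T$ while $S'$ has $R$-torsion. The paper recovers flatness by a separate step, shrinking once more to the flat locus of the finite map over a reduced base (via \cite{bhatt_hanse:sixfunctor}); you would need the same device, and for it to yield a nonempty dense affinoid you are again forced back to the first gap, namely arranging that the image of $\Spa(S')$ in $\Spa(R)$ is dense to begin with.
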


\begin{proof}
We will prove the result using Berkovich spaces, and then transfer it over to adic spaces. 
As such, let $M(R)$ and $M(S)$ denote the associated Berkovich affinoid spaces, and let $\pi^{\Ber}\colon M(S) \to M(R)$ denote the associated map of Berkovich affinoid spaces.

First assume that the morphism $\pi^{\Ber}\colon M(S) \to M(R)$ is quasi-finite i.e., for every $y \in M(S)$, we have that $\dim_{y}\pi^{\Ber} = 0$.
If we let $\sZ$ denote the relative interior of $\pi^{\Ber}$, then the restriction $\pi^{\Ber}_{|\sZ}$ is a finite and flat morphism, and hence open by \cite[Proposition 3.2.7]{BerkovichEtaleCohomology}. 
Using Lemma 3.1.2 of \textit{loc.~cit.,} we may find small affinoid opens $M(S') \subset M(S)$ and $M(R') \subset M(R)$ such that the morphism $\pi^{\Ber}_{|M(S')}\colon M(S') \to M(R')$ is finite and flat. 
To translate the result to affinoid $K$-analytic spaces, we use \autoref{lemma:equivalenceproperfinite}.(2) and \cite[Lemma 1.4.5.(iv) \& p.~425]{huber}.

Now suppose that $\pi^{\Ber}$ is not quasi-finite i.e., there exists a point $y \in M(S)$ such that $\dim_{y}\pi^{\Ber} = d \geq 1$. 
By \cite[Theorems 4.6 \& 3.2]{ducros_variation}, there exists an affinoid neighborhood $V = M(S')$ of $y$ such that there exists a quasi-finite morphism
\[
\varphi\colon V \to \mA_{M(R)}^d.
\]
This means that $\varphi$ is topologically proper and quasi-finite at every point of $V$. 
Let $y' \in V$, and let $x' = \varphi(y')$. Suppose that $M(R\langle T_1/r_1,\dots,T_d/r_d\rangle) \subset \mA_{M(R)}^d$ is a relative closed disk such that $x'$ lies in the relative interior of $M(R\langle T_1/r_1,\dots,T_d/r_d\rangle)$ i.e., it lies in the topological interior of $M(R\langle T_1/r_1,\dots,T_d/r_d\rangle)$. 
Let $V’ = \varphi^{-1}(M(R\langle T_1/r_1,\dots,T_d/r_d\rangle))$. We claim that  $\varphi_{|V’}$ is finite at $y’$. 

First, we note that $\varphi_{|V’}$ is quasi-finite as $M(R\langle T_1/r_1,\dots,T_d/r_d\rangle)$ is a compact, closed subset of $\mA_{M(R)}^d$, and hence it suffices to show that $\varphi_{|V'}$ is boundaryless at $y'$ i.e., $y'$ is in the relative interior $\Int(V'/M(R\langle T_1/r_1,\dots,T_d/r_d\rangle))$. 
From the above and \cite[Theorems 4.6]{ducros_variation}, we have the following morphisms
\[
\begin{tikzcd}
V' \arrow{r}{\varphi_{|V'}} \arrow[bend right = 25]{rr}{\pi^{\Ber}_{|V'}}& M(R\langle T_1/r_1,\dots,T_d/r_d\rangle) \arrow{r} &M(R). 
\end{tikzcd}
\]
By the choice of $M(R\langle T_1/r_1,\dots,T_d/r_d\rangle)$, we have that $y'$ is in the relative interior $\Int(V'/M(R))$. 
Now the claim follows from \cite[Proposition 1.5.5.(ii)]{BerkovichEtaleCohomology} as 
\[\Int(V'/M(R)) = \Int(V'/M(R\langle T_1/r_1,\dots,T_d/r_d\rangle))\, \cap \,\varphi_{|V'}^{-1}(\Int(M(R\langle T_1/r_1,\dots,T_d/r_d\rangle)/M(R))),\] and hence $y' \in \Int(V'/M(R\langle T_1/r_1,\dots,T_d/r_d\rangle))$ and therefore $\varphi_{|V’}$ is finite at $y’$. 
By Proposition 3.1.4 of \textit{loc.~cit.}, we can find affinoid neighbourhoods $V'' = M(S'')$ of $y'$ and $M(R'\langle T_1/r_1',\dots,T_d/r_d' \rangle)$ of $x'$ such that $\varphi$ induces a finite morphism 
\[
M(S'') \to M(R'\langle T_1/r_1',\dots,T_d/r_d'\rangle), 
\] 
and so we have that $S''$ is finite over $R'\langle T_1/r_1',\dots,T_d/r_d'\rangle$. 
If we consider the ideal $I = ( T_1,\dots, T_n)$ in $S''$ and take quotients, we have that $S''/I$ is finite over $R'$, and hence there exists an affinoid subset $M(S''/I)$ of $V''$ such that the morphism $M(S''/I) \to M(R')$ is finite. 
Moreover, we may assume, by Lemma 3.1.2 of \textit{loc.~cit.,} that $M(R') \subset M(R)$ is an affinoid open as we may take $M(R'\langle T_1/r_1',\dots,T_d/r_d' \rangle)$ to be arbitrarily small. 

Translating this result back to adic spaces using \autoref{lemma:equivalenceproperfinite}.(2) and \cite[Lemma 1.4.5.(iv) \& p.~425]{huber}, we have a finite morphism $\Spa(S''/I, (S^{''\circ}/I \cap S^{''\circ})^c ) \to \Spa(R',R^{'\circ})$ where $(S^{''\circ}/I \cap S^{''\circ})^c $ is the integral closure of $S^{''\circ}/I \cap S^{''\circ}$ in $S''/I$ and where $\Spa(R',R^{'\circ})$ is open in $\Spa(R,R^{\circ})$. 
Since $\Spa(R',R^{'\circ})$ is reduced, \cite[Theorem 2.21]{bhatt_hanse:sixfunctor} tells us that the flat locus is a dense open subset of $\Spa(R',R^{'\circ})$. 
Consider a smaller affinoid open of the flat locus, call it $\Spa(R'',R^{''\circ})$, and let $\Spa(S''',S^{'''\circ})$ denote its preimage in $\Spa(S''/I, (S^{''\circ}/I \cap S^{''\circ})^c )$. 
Then, we have that the restricted morphism $\pi\colon \Spa(S''',S^{'''\circ}) \to \Spa(R'',R^{''\circ})$ is finite and flat, and $\Spa(R'',R^{''\circ})$ is a dense affinoid subspace of $\Spa(R,R^{\circ})$, as desired.
\end{proof}

\begin{lemma}\label{lem:affinefactorisation}
Let $W_1 = \Spa(R,R^{\circ})$, $W_2 = \Spa(S,S^{\circ})$, and $W_3 = \Spa(T,T^{\circ})$ be affinoid $K$-analytic spaces. 
Let $\pi\colon W_2 \to W_1$ be a non-constant, faithfully flat morphism, and let $f\colon W_2\to W_3$ be a non-constant morphism. 
Assume that $W_1$ and $W_2$ are irreducible and normal, and that there exists a dense subset $E\subset W_1$ such that $f_{|\pi^{-1}(x)}$ is constant for every $x\in E$. 
Then there exists a unique $h\colon W_1\to W_3$ such that $f = h\circ \pi$.
\end{lemma}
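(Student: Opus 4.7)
The approach is faithfully flat descent for $\pi\colon W_2\to W_1$. Uniqueness is immediate: the ring homomorphism $R\to S$ corresponding to $\pi$ is injective because $\pi$ is faithfully flat, so any two $h_1,h_2\colon W_1\to W_3$ with $h_i\circ \pi = f$ must induce the same ring map $T\to R$ and hence coincide as morphisms of affinoid adic spaces.

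For existence, I would form the fibre product $W_2\times_{W_1}W_2$ in the adic category (which exists since everything is locally of finite type over $\Spa(K,K^{\circ})$) and consider the two morphisms
\[
g_1 \coloneqq f\circ \pr_1,\qquad g_2 \coloneqq f\circ \pr_2\colon W_2\times_{W_1}W_2\longrightarrow W_3.
\]
The key point is that, by faithfully flat descent for morphisms of adic spaces, $f$ will descend to a (necessarily unique) $h\colon W_1\to W_3$ with $f = h\circ \pi$ as soon as one proves $g_1 = g_2$.

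To establish $g_1 = g_2$, fix $x\in E$. The hypothesis says $f(\pi^{-1}(x))$ is a single point $t_x\in W_3$. Over the fibre of $W_2\times_{W_1}W_2\to W_1$ above $x$, namely $\pi^{-1}(x)\times_{\kappa(x)}\pi^{-1}(x)$, both projections land in $\pi^{-1}(x)$ and are therefore sent to $t_x$ by $f$. Hence $g_1$ and $g_2$ agree on $(\pi\times\pi)^{-1}(E)$. Since $\pi$ is faithfully flat it is open in the adic category (the analogue of \autoref{lemma:flatopen}), so $\pi\times\pi$ is open and $(\pi\times\pi)^{-1}(E)$ is dense in $W_2\times_{W_1}W_2$. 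The equalizer of $g_1$ and $g_2$ is a closed adic subspace of the reduced space $W_2\times_{W_1}W_2$ (reducedness coming from normality of $W_2$), and it contains a dense subset, so it equals the whole space.

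To execute the descent concretely I would reduce to the finite flat case by \autoref{lemma:proxyNoether}: shrinking to a dense affinoid $\tilde W_1\subset W_1$ and an affinoid $\tilde W_2\subset W_2$ over which $\pi$ becomes finite faithfully flat, the classical faithfully flat descent for modules over the Noetherian base $\tilde R$ produces a unique $\tilde h\colon \tilde W_1\to W_3$ with $f = \tilde h\circ \pi$ on $\tilde W_2$, the necessary cocycle condition being exactly the equality $g_1 = g_2$ proved above. The uniqueness of $\tilde h$ on overlaps allows one to glue the local descents obtained as $\tilde W_1$ varies through a cover of $W_1$ into a single morphism $h\colon W_1\to W_3$, the extension across the complementary nowhere dense locus being controlled by the normality of $W_1$ and the separatedness of $W_3$. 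The main technical obstacle is the verification that $\pi\times\pi$ is open and that descent for morphisms functions in the adic category as cleanly as in the scheme category; both are standard once the reduction to the finite flat situation supplied by \autoref{lemma:proxyNoether} is in place.
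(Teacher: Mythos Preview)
Your overall strategy---show $f\circ\pr_1=f\circ\pr_2$ on the fibre product and then descend---is a reasonable alternative, but the execution you sketch has a real gap at the extension step. \autoref{lemma:proxyNoether} produces a \emph{single} dense affinoid $\tilde W_1\subset W_1$ over which $\pi$ becomes finite flat; it does not produce a cover of $W_1$ by such affinoids, so your phrase ``as $\tilde W_1$ varies through a cover of $W_1$'' is not justified. And once you have $\tilde h\colon \tilde W_1\to W_3$, appealing to ``normality of $W_1$ and separatedness of $W_3$'' does not extend $\tilde h$ across the complement: normality only controls extension across codimension~$\ge 2$ (and then typically for targets that are proper, not affinoid), whereas the complement of a dense affinoid open can perfectly well have codimension~$1$. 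A secondary issue is that your reducedness claim for $W_2\times_{W_1}W_2$ needs more than ``normality of $W_2$''---one has to argue via flatness over the domain $R$ and characteristic~$0$ that the (completed) tensor product is reduced.

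The paper sidesteps both problems by working at the level of rings rather than descending a morphism. It also passes to a finite flat $\tilde W_2\to\tilde W_1$, but instead of descending $f$ over $\tilde W_1$ and then trying to extend, it uses the trace of the finite flat map to produce, for each $g\in f^*(T)$, an element $\tilde g\in R_1=\mathcal{O}(\tilde W_1)\subset\mathrm{Frac}(R)$ with $\pi^*(\tilde g)=g$ on a dense set of classical points. The punchline is the purely algebraic identity $\pi^*(R)=S\cap\pi^*(\mathrm{Frac}(R))$ coming from faithful flatness of $R\to S$: since $g\in S$ already and we have exhibited it in $\pi^*(\mathrm{Frac}(R))$, it lies in $\pi^*(R)$. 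This builds $h^*\colon T\to R$ directly, without any gluing or extension over $W_1$.
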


\begin{proof}
The proof follows closely the proof of  \cite[Lemma 3.11]{javanpeykarXie:Finitenesspseudogroupless}. 
We will prove the result using rigid analytic spaces, and then transfer it over to adic spaces. 
To ease notation, we will simply identify $W_1,W_2$, and $W_3$ and the morphisms $\pi$ and $f$ with their associated images under the quasi-inverse of the functor $\mathfrak{r}$ from \cite[Proposition 4.5]{huber2}. 

We want to complete the diagram
\[
\xymatrix{
 W_1 \ar@/_/@{.>}[d]^{h}  &  W_2\ar[l]^{\pi}\ar[ld]^{f}  \\
W_3  & 
}
\]
which is equivalent to completing the diagram
\[
\xymatrix{
 R \ar[r]^{\pi^*}  &  S  \\
T. \ar[ru]^{f^*}\ar@/^/@{.>}[u]^{h^*}  & 
} 
\]
To construct $h^*$, it is enough to show that $f^*(T) \subset \pi^*(R)$. 
First, we consider $g \in f^*(T)$ and then will construct $r \in R$ such that $g=\pi^*(r)$.

By \autoref{lemma:proxyNoether}, shrinking further to ensure that the degree is constant, we may find an affinoid subspace $W_2'=\Sp(Q) \subset W_2$ and an admissible open affinoid subspace $W_1'=\Sp(R_1)  \subset W_1$ such that $\pi_{\vert_{W_2'}}\colon W_2' \rightarrow W_1' $ is finite and flat of degree $d$:
\[
\xymatrix{
R_1 \ar[r]^{\pi_{\vert_{W_2'}}^*} & Q \\
R\ar[u]\ar[r]^{\pi^*}  & S\ar[u]
}
\quad\quad
\xymatrix{
W_1' \ar@{^{(}->}[d]  & W_2' \ar[l]^{\pi_{\vert_{W_2'}}}\ar@{^{(}->}[d]\\
W_1  & W_2. \ar[l]^{\pi}
}
\]
Let $\tr(\pi^*_{\vert_{W_2'}})\colon Q \rightarrow R_1$ be the trace as defined in \cite[\href{https://stacks.math.columbia.edu/tag/0BSY}{Section 0BSY}]{stacks-project}. 
For our chosen $g \in f^*(T)$ we denote by $g_{\vert_{W_2'}}$ the restriction to $W_2'$, i.e., its image in $Q$.  We consider now $\tilde{g}:= \frac{1}{d}\tr(g_{\vert_{W_2'}}) \in R_1$; this $\tilde{g}$ will be the element $r$ such that $\pi^*(r)=g$.  

Returning to the proof, we first show that $g$ and $\pi^*(\tilde{g})$ coincide as elements in $\pi^*(R_1) \subset \pi^*(\mathrm{Frac}(R_1))=\pi^*(\mathrm{Frac}(R))$ inside $\mathrm{Frac}(S)$. 
Let $x \in W_2'(K)$ belonging to $ \pi^{-1}(W_1')$ and to $ \pi^{-1}(E) $, then we have
\[
\pi^*(\tilde{g})(x)=\tilde{g}(\pi(x)) =\frac{1}{d}\left( \sum_{y \in W_2' \vert \pi(y)=\pi(x) } g(y)\right)=g(x)
\]
where  for the last equality we used that $g \in f^*(T)$ and that $f$ is constant on $\pi^{-1}(x)$. As $\pi^{-1}(W_1')$ is dense in $W_2'$ and $\pi^{-1}(E)$ is dense in $W_2'$, then  $g=\pi^*(\tilde{g})$ as elements  of $\pi^*(\mathrm{Frac}(R))$. 

As $\pi^*\colon R \rightarrow S$ is faithfully flat, then we can use \cite[Lemma 3.10 (2)]{javanpeykarXie:Finitenesspseudogroupless} to see that 
\[
\pi^{*}(R)=S \cap \pi^*(\mathrm{Frac}(R))
\]
which gives $f^*(T) \subset \pi^*(R)$. 
Note that this gives a homomorphism $h^*\colon T\to R$ of affinoid algebras, and since a homomorphism of affinoid algebras is continuous and bounded \cite[6.1.3/1, 6.2.2/1, 6.2.3/1]{BGR}, this gives a morphism $h\colon \Sp(R) \to \Sp(T)$ of affinoid rigid analytic spaces. 
Using \cite[Proposition 4.5.(iv)]{huber2}, we arrive at our desired morphism of affinoid $K$-analytic spaces
\[
h\colon\Spa(R,R^\circ)\rightarrow \Spa(T,T^{\circ}).
\]

To conclude, we note that the construction of $\pi^*r$ is independent of $W_2'$. If we choose a different open subset $W_2''$ which is finite and flat over an open of $W_1$ and we define $\tilde{g}':= \frac{1}{d}\tr(g_{\vert_{W_2''}})$ we get that for every $x$ in $\pi^{-1}(E)$ for which both functions are defined 
\[\pi^*(\tilde{g}')(x)=\tilde{g}(\pi(x)) =\frac{1}{d}\left( \sum_{y' \in W_2'' \vert \pi(y')=\pi(x) } g(y')\right)=g(x),\]
hence $\pi^*(\tilde{g}')$ and $\pi^*(\tilde{g})$ coincide on the dense set $\pi^{-1}(E)$.
\end{proof}

\begin{lemma}\label{lemma:extendmap}
Let $\pi\colon \sY\to \sB$ be a non-constant, surjective, flat morphism between normal, irreducible, separated, locally of finite type adic spaces over $\Spa(K,K^{\circ})$. Let $f\colon \sY\to \sZ$ be a non-constant morphism of adic spaces where $\sZ$ is a $K$-analytic space. 
Let $E\subset \sB(K)$ be a dense subset. 
Assume that for every $x\in E$, the restriction $f_{|\pi^{-1}(x)}$ is constant. 
Then there exists a morphism $h\colon \sB\to \sZ$ such that $f = h\circ \pi$. 
\end{lemma}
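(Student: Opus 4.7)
The plan is to cover $\sB$ by affinoid opens on which Lemma \ref{lem:affinefactorisation} applies, and then glue the resulting local factorizations. Fix a point $b \in \sB$ and, using surjectivity of $\pi$, pick $y \in \pi^{-1}(b)$. Choose an affinoid neighborhood $W_3 \subset \sZ$ of $f(y)$ and, by continuity of $f$, shrink to an affinoid neighborhood $\widetilde W_2 \subset \sY$ of $y$ with $f(\widetilde W_2) \subset W_3$. Since $\pi$ is flat and locally of finite type, the analogue of \autoref{lemma:flatopen} shows that $\pi$ is open, so $\pi(\widetilde W_2)$ is open in $\sB$; choose an affinoid neighborhood $V_b \subset \pi(\widetilde W_2)$ of $b$, and (shrinking $\widetilde W_2$ further and using openness again) extract an affinoid $W_2 \subset \pi^{-1}(V_b)$ containing $y$ with $\pi(W_2) = V_b$. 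Restricting to irreducible components passing through $y$ and $b$, I arrange that $W_2$ and $V_b$ are irreducible and normal, using normality of $\sY$ and $\sB$. Then $\pi|_{W_2} \colon W_2 \to V_b$ is faithfully flat and $E \cap V_b$ is dense in $V_b$, so Lemma \ref{lem:affinefactorisation} produces a unique morphism $h_b \colon V_b \to W_3 \hookrightarrow \sZ$ with $f|_{W_2} = h_b \circ \pi|_{W_2}$.

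I next claim that the relation $f = h_b \circ \pi$ actually holds on all of $\pi^{-1}(V_b)$, not merely on $W_2$. For any $y' \in \pi^{-1}(V_b)$, repeat the local construction with $y'$ in place of $y$ (and the same $V_b$, after possibly shrinking): this yields a second affinoid $W_2' \subset \pi^{-1}(V_b)$ containing $y'$ and surjecting onto $V_b$ flatly, together with a unique morphism $h'_b \colon V_b \to \sZ$ satisfying $f|_{W_2'} = h'_b \circ \pi|_{W_2'}$. On the dense subset $E \cap V_b$, both $h_b$ and $h'_b$ must send $x$ to the single value taken by $f$ on $\pi^{-1}(x)$, so they agree pointwise on $E \cap V_b$. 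Applying Lemma \ref{lem:affinefactorisation} to the faithfully flat surjection $W_2 \sqcup W_2' \to V_b$ (whose target can be placed inside a common affinoid of $\sZ$ containing both $W_3$ and $W_3'$ after localizing around the chosen points), the uniqueness statement forces $h_b = h'_b$; in particular $f(y') = h_b(\pi(y'))$.

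Finally, to glue: on a nonempty overlap $V_b \cap V_{b'}$, both restrictions $h_b|_{V_b \cap V_{b'}}$ and $h_{b'}|_{V_b \cap V_{b'}}$ factor $f|_{\pi^{-1}(V_b \cap V_{b'})}$ through the faithfully flat morphism $\pi|_{\pi^{-1}(V_b \cap V_{b'})}$, by the previous paragraph. Since faithfully flat morphisms of adic spaces are effective epimorphisms (the analogue of the scheme-theoretic fact), this forces $h_b|_{V_b \cap V_{b'}} = h_{b'}|_{V_b \cap V_{b'}}$. The local morphisms $\{h_b\}_{b \in \sB}$ therefore glue to a global morphism $h \colon \sB \to \sZ$ satisfying $f = h \circ \pi$.

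The main obstacle is the middle step: upgrading the local factorization from the single affinoid $W_2$ used when invoking Lemma \ref{lem:affinefactorisation} to the full preimage $\pi^{-1}(V_b)$. This is what allows the gluing argument to proceed uniformly without worrying about the particular choice of $W_2$, and it is where the density of $E$ combined with the uniqueness in the affinoid lemma does the essential work. A secondary technicality, worth a brief separate verification, is the openness of the flat morphism $\pi$ for adic spaces that are merely locally of finite type (rather than arising from algebraic morphisms as in \autoref{lemma:flatopen}), which is needed to ensure that the images $\pi(\widetilde W_2)$ are open so that the affinoid covers can be produced.
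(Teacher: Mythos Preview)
Your approach is essentially the same as the paper's: reduce to the affinoid setting, apply \autoref{lem:affinefactorisation}, and glue using uniqueness of the construction. The chief difference is in how the affinoid reduction is carried out. The paper first replaces $\sY$ by an affinoid open (with image in an affinoid of $\sZ$) and then observes that, because $\sB$ is separated, the restricted morphism $\pi$ is an \emph{affinoid morphism}; hence for any affinoid $\sV\subset\sB$ the full preimage $\pi^{-1}(\sV)$ is already affinoid and one may apply \autoref{lem:affinefactorisation} directly to $\pi^{-1}(\sV)\to\sV$. This bypasses your ``middle step'' entirely within a fixed affinoid piece of $\sY$, and it also avoids appealing to openness of flat morphisms of adic spaces (which you correctly flag as not being covered by \autoref{lemma:flatopen}).

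Your middle step, as written, has a small gap: applying \autoref{lem:affinefactorisation} to the disjoint union $W_2\sqcup W_2'$ violates the irreducibility hypothesis, and there is no reason the images $f(W_2)$ and $f(W_2')$ lie in a common affinoid of $\sZ$, so the suggested ``placing the target inside a common affinoid'' need not be possible without shrinking $V_b$. The correct fix is the one the paper uses for its gluing (phrased there as ``independence from the choices in the construction of the element $r$''): the element $h_b^*(g)\in\sO(V_b)$ is characterized, for each $g$ in the target affinoid algebra, by $\pi^*h_b^*(g)=f^*(g)$, and the trace construction in \autoref{lem:affinefactorisation} shows this element is determined by the values of $f$ on $\pi^{-1}(E)$, independently of which affinoid piece $W_2$ (or $W_2'$) of $\sY$ one works in. This gives $h_b=h_b'$ on overlaps at the level of affinoid algebras, without needing a common target affinoid.
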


\begin{proof}
We want to complete the diagram
\[
\xymatrix{
\sB \ar@/_/@{.>}[d]^{h}  &  \sY \ar[l]^{\pi}\ar[ld]^{f}  \\
  \sZ. & 
}
\]
We cover $\sY$ by open affinoids and as $\pi$ is surjective  it is enough to prove the theorem for any of these opens. We can further shrink these opens so that the image of each of them  in $\sZ$ is contained in an affinoid set, so we can assume that both $\sY$ and $\sZ$ are affinoid. 
Since $\sY$ is affinoid and $\sB$ is separated, the morphism $\pi\colon\sY\to \sB$ is affinoid i.e., the pre-image of an open affinoid of $\sB$ is an open affinoid in $\sY$. 
Indeed, this fact can be proved using the same argument from \cite[\href{https://stacks.math.columbia.edu/tag/01SG}{Tag 01SG}]{stacks-project}. Now if we cover $\sB$ by open affinoids $\sV_i$, then the preimages $\sW_i = \pi^{-1}(\sV_i)$ are open affinoids. 
We note that we may assume that all of the affinoids will have rings of integral elements isomorphic to the power bounded elements as our assumptions imply that the adic spaces in question come from rigid analytic spaces by \cite[Proposition 4.5.(iii)]{huber2}. 
Therefore, we are reduced to the situation of \autoref{lem:affinefactorisation}. The independence from the choices in the construction of the element $r$ in {\it loc.~cit.~}ensures that we can glue together these locally defined maps into a global map $h$. 
\end{proof}

\begin{lemma}\label{lemma:constantaffine}
Let $\sX/K$ be an irreducible, reduced, separated $K$-analytic space. Let $\sD \subset \sX$ be a closed subset such that every non-constant analytic morphism $\mG^{\an}_{m,K}$ factors over $\sD$. 
Let $G/K$ be a connected, finite type affine group scheme and let $\sU$ be a dense open of $G^{\an}$ with $\codim(G^{\an}\setminus \sU)\geq 2$. 
If $\varphi\colon \sU \to \sX$ is an analytic morphism such that $\varphi(\sU) \nsubseteq \sD$, then $\varphi$ is constant.
\end{lemma}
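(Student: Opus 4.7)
The plan is to exploit the structure of connected affine algebraic groups in characteristic zero and to propagate the constancy of $\varphi$ along a network of one-parameter slices inside a dense open ``big cell'' of $G$. I organize the argument in four steps.

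First, I extend the hypothesis from $\Gm^{\an}$ to $\Ga^{\an}$: given any analytic morphism $\psi\colon \Ga^{\an}\to\sX$, restrict to the Zariski dense open $\Gm^{\an} = \Ga^{\an}\setminus\{0\}$. Either this restriction is constant, in which case $\psi$ is constant by density of $\Gm^{\an}$ in $\Ga^{\an}$ together with the separatedness of $\sX$ (so the preimage of the diagonal under $\psi \times (\text{const})$ is closed); or it factors over $\sD$ by hypothesis, in which case $\psi^{-1}(\sD)$ is a closed subset of $\Ga^{\an}$ containing the dense subset $\Gm^{\an}$, forcing $\psi$ to factor over $\sD$.

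Second, I reduce to a product of copies of $\Ga$ and $\Gm$. Since $G$ is a connected affine algebraic group over the algebraically closed field $K$ of characteristic zero, the Bruhat decomposition of a Levi subgroup combined with the fact that in characteristic zero the unipotent radical is isomorphic as a variety to an affine space furnishes a non-empty Zariski open $V\subset G$ with $V\cong \mA^m\times\Gm^r$ as varieties, where $m+r = \dim G$. Set $\sU_V := \sU\cap V^{\an}$. By \autoref{lemma:absoluteproperties}, $V^{\an}$ is irreducible and it is Zariski dense in $G^{\an}$, so $\sU_V$ is dense in $\sU$; and the codim-$\ge 2$ hypothesis on $G^{\an}\setminus\sU$ is inherited by $V^{\an}\setminus\sU_V$ in $V^{\an}$.

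Third, I choose a basepoint and apply the hypothesis along coordinate lines. I pick $u_0\in\sU_V$ with $\varphi(u_0)\notin\sD$ that is also sufficiently generic, in the sense that for each coordinate direction $i\in\{1,\dots,m+r\}$ of $V\cong \mA^m\times\Gm^r$, the coordinate line $L_i(u_0)\subset V^{\an}$ through $u_0$ in direction $i$ (a copy of $\mA^{1,\an}$ or $\Gm^{\an}$) is entirely contained in $\sU_V$. Such a $u_0$ exists because the locus of points failing this condition for some $i$ is the union over $i$ of the preimages, under the coordinate projections $V^{\an}\to V_{-i}^{\an}$ forgetting the $i$-th coordinate, of the codimension-$\ge 1$ image of $V^{\an}\setminus\sU_V$; this bad locus has codimension at least $1$ in $V^{\an}$, and meets the non-empty open $\varphi^{-1}(\sX\setminus\sD)\cap\sU_V$ only in a proper Zariski closed subset. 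For each $i$, the composition $L_i(u_0)\hookrightarrow\sU_V\xrightarrow{\varphi}\sX$ is an analytic morphism from $\mA^{1,\an}$ or $\Gm^{\an}$ to $\sX$ whose image contains $\varphi(u_0)\notin\sD$; by the (extended) hypothesis it must be constant. Thus $\varphi\equiv\varphi(u_0)$ on each $L_i(u_0)$.

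Fourth, I propagate constancy throughout $\sU_V$. Iterating the construction---at each stage moving to a sufficiently generic point on a coordinate line through the current point and re-applying Step 3---one builds inductively a non-empty Zariski open subset $W\subset V^{\an}$, contained in $\sU_V$, on which $\varphi$ takes the constant value $\varphi(u_0)$. Since $\sX$ is separated, the fiber $F:=\varphi^{-1}(\varphi(u_0))$ is closed in $\sU$; since $W$ is a non-empty open subset of the irreducible space $\sU_V$, it is dense in $\sU_V$, whence $F\supseteq \sU_V$; since $\sU_V$ is dense in $\sU$ and $F$ is closed, we conclude $F=\sU$ and $\varphi$ is constant. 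The main obstacle is the rigorous verification that the iterated ``sufficiently generic'' choices produce such a Zariski open $W$: at each iteration one must check that the set of coordinate values keeping the new basepoint in the good locus of Step 3 is Zariski open and non-empty in the relevant one-dimensional coordinate slice, and that after $m+r$ steps the accumulated image is Zariski open in $V^{\an}$. This is a straightforward but careful dimension count, relying on the codim-$\ge 2$ hypothesis to ensure the projections of $V^{\an}\setminus\sU_V$ remain of codimension $\ge 1$ at every stage.
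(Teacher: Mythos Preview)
Your strategy is genuinely different from the paper's, so let me contrast the two before pointing to the gap.

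\textbf{Comparison.} The paper argues by induction on $\dim G$ using quotients by proper connected closed subgroups. For each such $H\subset G$, the flat surjection $\pi_{H^{\an}}\colon G^{\an}\to (G/H)^{\an}$ has fibers isomorphic to $H^{\an}$; the induction hypothesis makes $\varphi$ constant on generic fibers, and then the key factorization Lemma~\ref{lemma:extendmap} produces a map $h_{H^{\an}}\colon \pi_{H^{\an}}(\sU)\to\sX$ with $\varphi = h_{H^{\an}}\circ\pi_{H^{\an}}$. Translating into meromorphic function fields, this gives $\varphi^*(\sM(\sX))\subset\sM(G^{\an})^{H^{\an}}$; running over enough $H$'s to generate $G$ forces $\varphi^*(\sM(\sX))=K$, hence $\varphi$ is constant by \autoref{lemma:separatingpoints}. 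Your approach instead exploits the big Bruhat cell $V\cong\mA^m\times\Gm^r$ and propagates constancy along one-dimensional coordinate slices. Your Steps~1--3 are fine (the extension to $\Ga^{\an}$ is correct, the big cell exists for the reasons you give, and the choice of a generic $u_0$ with all coordinate lines inside $\sU_V$ follows from the codimension hypothesis and the flatness of the coordinate projections).

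\textbf{The gap.} Step~4 is where the argument is incomplete, and I do not think it is as routine as you suggest. You assert that iterated generic moves along coordinate lines sweep out a Zariski open $W\subset\sU_V$, but you never verify this. The difficulty is that the ``bad locus'' at each stage is the saturation $p_i^{-1}(p_i(Z))$ of $Z=V^{\an}\setminus\sU_V$ under a \emph{non-proper} projection $p_i$, and $p_i(Z)$ need not be closed or analytic in the adic setting; even bounding the dimension of its closure requires an argument. Tracking the nested constraints through $m+r$ iterations and showing the resulting reachable set is Zariski open (rather than merely constructible, or merely dense in some weaker sense) is exactly the kind of issue that Lemma~\ref{lemma:extendmap} is engineered to bypass: it converts ``$\varphi$ is constant on a dense set of fibers of a flat surjection'' directly into a factorization through the base, with no need to understand the reachable set explicitly. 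If you want to salvage your approach, the cleanest route is to keep your big-cell reduction but replace the ad~hoc iteration in Step~4 by an induction on $m+r$ via the projection $p_1\colon V^{\an}\to (C_2\times\cdots\times C_{m+r})^{\an}$, invoking Lemma~\ref{lemma:extendmap} to descend $\varphi$ to the base; this makes the argument rigorous and is essentially a special case of the paper's method.
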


\begin{proof}
We proceed by induction on $\dim G$. When $\dim G \leq 1$, the result is clear because such a finite type, connected, affine group scheme contains a dense $\mG_{m,K}$. 

To show that $\varphi\colon \sU \to \sX$ is constant, it suffices to show that the meromorphic map $G^{\an} \dashrightarrow \sX$ is constant. 
Let $\sZ = \overline{\varphi(\sU)}$ denote the analytic closure of the image of $\varphi$. 
Since $G^{\an}$ is irreducible (by \autoref{lemma:absoluteproperties}.(5)) and hence $\sU$ is irreducible, the closure $\sZ \subset \sX$ is irreducible. 
Moreover, to show that $\varphi$ is constant, we may and do assume that $\sZ = \sX$. 

Since $\varphi$ is dominant, it will induce an embedding of fields of meromorphic function $\varphi^*(\sM_{\sX}(\sX)) \subseteq \sM_{G^{\an}}(G^{\an})$. 
This follows because \autoref{coro:meromorphicfield} implies that $\sM_{\sX}(\sX)$ and $\sM_{G^{\an}}(G^{\an})$ are fields (for this latter fact we use that $G^{\an}$ is smooth which follows from \cite[\href{https://stacks.math.columbia.edu/tag/047N}{Tag 047N}]{stacks-project} and \autoref{lemma:absoluteproperties}.(4)), and the dominant morphism will induce a map of fields of meromorphic functions, which must be an injection. 

We now claim that if $\varphi^*(\sM_{\sX}(\sX)) = K$, then $\varphi\colon G^{\an} \dashrightarrow \sX$ is constant. 
Indeed, if $\varphi$ was not constant, then the image of $\varphi$ contains at least two distinct values $x$ and $y$ in $\sX$. 
Since $\sX$ is separated, \autoref{lemma:separatingpoints} tells us that we can find a meromorphic function $f$ on $\sX$ that separates these points, but then the composition $f\circ \varphi$ is a non constant element of $\varphi^*(\sM_{\sX}(\sX))$, which yields a contradiction. 
Thus, it suffices to prove that $\varphi^*(\sM_{\sX}(\sX)) = K$. Below, we will suppress the subscript notation from the sheaf of meromorphic functions. 

To prove this statement, we first note that for an irreducible closed subgroup $H\subset G$, we may form the quotient
\[
\pi_H\colon G \to G/H
\]
where $G/H$ is a smooth, quasi-projective scheme by \cite[IV$_\text{A}$]{SGA3}. 
By \cite[Example 2.21]{ulirsch2017tropicalization}, the analytification functor commutes with taking (stack) quotients, and hence we may identify $(G/H)^{\an}\cong G^{\an}/H^{\an}$. 
We note that the result from \textit{loc.~cit.~}is only for Berkovich $K$-analytic spaces, however using \autoref{thm:comparision} we can transfer this identification to adic spaces. 

Since $\pi_H$ is flat and surjective, we have that the morphism
\[
\pi_{H^{\an}}\colon G^{\an} \to G^{\an}/H^{\an}
\]
is flat and surjective by \autoref{lemma:surjectiveflat}. 
Since $G^{\an}/H^{\an}$ is smooth and irreducible (as it is the image of an irreducible space), $\sM(G^{\an}/H^{\an})$ is a field by \autoref{coro:meromorphicfield}, and moreover, we have that $\pi_{H^{\an}}^*(\sM(G^{\an}/H^{\an})) \cong \sM(G^{\an})^{H^{\an}}$. 

Since $\pi_{H^{\an}}$ is partially proper, flat, and surjective, \autoref{lemma:flatopen} tells us that $\pi_{H^{\an}}(\sU)$ is a big open subset of $G^{\an}/H^{\an}$ i.e., the codimension of the complement is greater than or equal to two. 
Moreover, there exists a dense subspace $\sV\subset \pi_{H^{\an}}(\sU)$ such that for every point $x\in \sV$, the open subset $\sU \cap \pi_{H^{\an}}^{-1}(x) \subset \pi_{H^{\an}}^{-1}(x)$ is big and satisfies $\varphi(\sU \cap \pi_{H^{\an}}^{-1}(x) ) \nsubseteq \sD$. 
As an adic space, $\pi_{H^{\an}}^{-1}(x) $ is isomorphic to $H^{\an}$, and the induction hypothesis says that $\varphi_{|\pi_{H^{\an}}^{-1}(x) }$
is constant for every $x\in \sV$. 

We want to apply \autoref{lemma:extendmap} to situation where $\sY = \sU$, $\pi = \pi_{H^{\an}}$, $\sB =  \pi_{H^{\an}}(\sU)$, and $\sZ = \sX$.
Therefore, we need to verify that $\sU$ and $\pi_{H^{\an}}(\sU)$ are normal, irreducible, separated, locally of finite type adic spaces over $\Spa(K,K^{\circ})$.
As $G^{\an}$ is smooth, irreducible, separated, and locally of finite type (see \autoref{lemma:absoluteproperties}) and $\sU$ is an open subset of $G^{\an}$, we have that $\sU$ satisfies all of these properties. 
To show these properties for $\pi_{H^{\an}}(\sU)$, we first note that a quasi-projective scheme is separated \cite[\href{https://stacks.math.columbia.edu/tag/01VX}{Tag 01VX}]{stacks-project}, and hence $G^{\an}/H^{\an}$ is smooth, irreducible, separated, and locally of finite type by \autoref{lemma:absoluteproperties}. 
By \autoref{lemma:flatopen}, $\pi_{H^{\an}}(\sU)$ is an open subset of $G^{\an}/H^{\an}$, and hence has the desired properties.

Now, \autoref{lemma:extendmap} tells us that there exists a map $h_{H^{\an}}\colon \pi_{H^{\an}}(\sU) \to \sX$ such that $\varphi = h_{H^{\an}} \circ \pi_{H^{\an}}$. In particular,
\[
\varphi^{*}(\sM(\sX)) \subset \pi_{H^{\an}}^*(\sM(G^{\an}/H^{\an})) \subset \sM(G^{\an})^{H^{\an}}.
\]
By \cite[Lemma 3.14]{javanpeykarXie:Finitenesspseudogroupless}, we have that $G^{\an} = \langle H_1^{\an},\dots,H_s^{\an}\rangle$ where $H_i$ are proper connected closed subgroups of $G$. 
Therefore, we conclude that
\[
\varphi^*(\sM(\sX)) \subset \bigcap_{i=1}^s \pi_{H_i^{\an}}^*(\sM(G^{\an}/H_i^{\an})) \subset \bigcap_{i=1}^s  \sM(G^{\an})^{H_i^{\an}} = \sM(G^{\an})^{\langle H_1^{\an},\dots, H_s^{\an} \rangle} = \sM(G^{\an})^{G^{\an}} = K,
\]
as desired. 
\end{proof}

\begin{prop}\label{prop:analyticpseudogroupless}
Let $\sX/K$ be an irreducible, reduced, separated $K$-analytic space. Let $\sD \subset \sX$ be a closed subset such that every non-constant analytic morphism $\mG^{\an}_{m,K}$ factors over $\sD$ and for every abelian variety $A$ over $K$ and every dense open $\sU$ of $A^{\an}$ with $\codim(A^{\an}\setminus \sU) \geq 2$, every non-constant analytic morphism $\sU \to \sX$ factors over $\sD$. 
Then, for every connected, finite type algebraic group $G/K$ and every dense open $\sU\subset G^{\an}$ with $\codim(G^{\an}\setminus \sU)\geq 2$, every analytic morphism $\varphi\colon\sU \to \sX$ factors over $\sD$.
\end{prop}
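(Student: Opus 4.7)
The plan is to prove the contrapositive: supposing $\varphi$ is a non-constant analytic morphism with $\varphi(\sU)\not\subseteq \sD$, I derive a contradiction. The strategy is to exploit Chevalley's structure theorem to split $G$ into an affine part governed by \autoref{lemma:constantaffine} and an abelian part governed by the abelian variety clause of \autoref{defn:KBrodyMod}, bridged by the descent in \autoref{lemma:extendmap}.

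By Chevalley's theorem, there is an exact sequence $1\to G_{\mathrm{aff}}\to G\xrightarrow{\pi}A\to 1$ with $G_{\mathrm{aff}}$ a connected affine algebraic group and $A$ an abelian variety (the case $\dim A = 0$ reduces immediately to \autoref{lemma:constantaffine}, so assume $\dim A\geq 1$). The analytified map $\pi^{\an}\colon G^{\an}\to A^{\an}$ is surjective, flat, partially proper, and open by \autoref{lemma:surjectiveflat}, \autoref{lemma:morphismpartiallyproper}, and \autoref{lemma:flatopen}, with every fiber a translate of $G_{\mathrm{aff}}^{\an}$. Set $\sU_0:=\varphi^{-1}(\sX\setminus\sD)$, which is a non-empty open of $\sU$, and $S:=G^{\an}\setminus\sU$, which has codimension at least two in $G^{\an}$. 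An upper-semicontinuity argument for the fiber dimension of $\pi^{\an}|_S$ shows that the locus of $a\in A^{\an}$ at which $S\cap (\pi^{\an})^{-1}(a)$ has codimension less than two in $(\pi^{\an})^{-1}(a)$ is a proper analytic subset of $A^{\an}$. Intersecting its complement with the non-empty open $\pi^{\an}(\sU_0)$ yields a dense $E\subset A^{\an}$ such that for each $a\in E$, the intersection $\sU\cap (\pi^{\an})^{-1}(a)$ is a big open of the fiber and $\varphi|_{\sU\cap (\pi^{\an})^{-1}(a)}$ has image not contained in $\sD$. Applying \autoref{lemma:constantaffine} to $G_{\mathrm{aff}}$ with this restriction forces $\varphi|_{\sU\cap (\pi^{\an})^{-1}(a)}$ to be constant for every $a\in E$.

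With constancy on a dense family of fibers, I apply \autoref{lemma:extendmap} to the flat, surjective, non-constant map $\pi^{\an}|_{\sU}\colon \sU\to \pi^{\an}(\sU)$, whose source and target are normal, irreducible, separated, and locally of finite type by \autoref{lemma:absoluteproperties} and \autoref{lemma:flatopen}. This descends $\varphi$ through $\pi^{\an}|_{\sU}$, producing $h\colon \pi^{\an}(\sU)\to\sX$ with $\varphi = h\circ \pi^{\an}|_{\sU}$. A direct dimension calculation identifies $A^{\an}\setminus\pi^{\an}(\sU)$ with $Z:=\{a\in A^{\an}: (\pi^{\an})^{-1}(a)\subseteq S\}$; from $(\pi^{\an})^{-1}(Z)\subseteq S$ one obtains $\dim Z + \dim G_{\mathrm{aff}} \leq \dim S \leq \dim G-2$, giving $\codim_{A^{\an}} Z\geq 2$, so $\pi^{\an}(\sU)$ is a big open of $A^{\an}$. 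Invoking the abelian-variety clause of \autoref{defn:KBrodyMod} on $h$: either $h$ is constant, forcing $\varphi$ to be constant and contradicting non-constancy, or $h$ factors through $\sD$, forcing $\varphi(\sU)\subseteq\sD$ and contradicting the other assumption. Either outcome is a contradiction.

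The main obstacle will be the pair of dimension bookkeeping tasks, namely verifying the density of $E$ (so that \autoref{lemma:constantaffine} applies on enough fibers to invoke \autoref{lemma:extendmap}) and the bigness of the image $\pi^{\an}(\sU)$ in $A^{\an}$ (so the abelian variety hypothesis applies to $h$). Both reduce to standard fiber-dimension inequalities but depend critically on the codimension-at-least-two hypothesis on $G^{\an}\setminus\sU$, without which $h$ could fail to be defined on a sufficiently big open of $A^{\an}$. The remainder of the proof is a formal combination of Chevalley's structure theorem with the previously established lemmas.
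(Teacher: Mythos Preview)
Your proposal is correct and follows essentially the same approach as the paper's proof: both use Chevalley's decomposition, apply \autoref{lemma:constantaffine} on the affine fibers over a dense set of points in the abelian quotient, descend via \autoref{lemma:extendmap} to a map $h$ from a big open of $A^{\an}$, and then invoke the abelian-variety hypothesis on $h$. The only cosmetic differences are that the paper argues directly that $\varphi$ is constant (rather than framing it as a contradiction), asserts the bigness of $\pi^{\an}(\sU)$ by citing \autoref{lemma:flatopen} without spelling out the fiber-dimension inequality you give, and uses a generic-point argument rather than upper-semicontinuity to produce the dense set $E$.
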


\begin{proof}
Let $G/K$ be a connected, finite type algebraic group, and let $\sU \subset G^{\an}$ be a dense open $\sU\subset G^{\an}$ with $\codim(G^{\an}\setminus \sU)\geq 2$. 
Let $\varphi\colon\sU \to \sX$ be an analytic morphism such that $\varphi(\sU) \nsubseteq \sD$. 
We will show that $\varphi$ is constant. 

Let $H \subset G$ be the (unique) normal affine connected subgroup of $G$ such that $A \coloneqq G/H$ is an abelian variety over $K$ (see \cite[Theorem 1.1]{conradChev}). Denote by $\pi\colon G \to A$ the quotient map. 
Since $\pi$ is flat and surjective, we have that $\sV \coloneqq \pi^{\an}(\sU)$ is open in $A^{\an}$ by \autoref{lemma:flatopen} with $\codim(A^{\an}\setminus \sV) \geq 2$. 
For every $x\in A^{\an}$, denote by $G^{\an}_x \coloneqq \pi^{\an^{-1}}(x)$ and $\sU_x \coloneqq \sU \cap G^{\an}_x$. 
Since $G^{\an}\setminus \sU$ has codimension greater than or equal to two, there exists a dense open $\sV_1 \subset \sV$ such that for every $x\in \sV_1$, the closed subset $G^{\an}_x\setminus\sU_x$ has codimension greater than or equal to two. 

Let $\eta$ denote the generic point of $A^{\an}$; recall that $A^{\an}$ is irreducible by \autoref{lemma:absoluteproperties}.(5). 
If $\varphi(\sU_{\eta})$ is contained in $\sD$, as $\sU_{\eta}$ is dense in $\sU$, we have that $\varphi(\sU)$ is also contained in $\sD$, but this contradicts our original assumption. 
Thus, we have that $\varphi(\sU_{\eta	})$ is not contained in $\sD$ and hence $\sU_1 \coloneqq \varphi^{-1}(\sX \setminus \sD)$ is a non-empty open of $\sU$. 
Since $\pi^{\an}$ is flat and surjective, we have that $\sV_2 \coloneqq \pi^{\an}(\pi^{\an^{-1}}(\sV_1)\cap \sU_1)$ is open in $\sV_1$. 
Note that for every point $x\in \sV_2$, the open subset $\sU_x$ of $G^{\an}_x$ is big and $\varphi(\sU_x)$ is not contained in $\sD$. 

Therefore by our first assumption and \autoref{lemma:constantaffine}, we have that $\varphi_{|\sU_x}$ is constant for every $x\in \sV_2$. 
Since $\sV_2$ is dense in $A^{\an}$, it follows from \autoref{lemma:extendmap} that there is a morphism $h\colon \sV \to \sX$ such that $\varphi = h\circ \pi_{|\sU}^{\an}$. 
Since $\varphi(\sU) \nsubseteq \sD$, we have that $h(\sV)\nsubseteq \sD$. 
Moreover, since $\sV$ is a big open of $A^{\an}$, our second assumption implies that $h$ is constant on $\sV$, and hence $\varphi$ is constant, as desired.
\end{proof}

\begin{proof}[Proof of \autoref{thm:equivalentBrody}]
This follows from \autoref{defn:KBrodyMod} and \autoref{prop:analyticpseudogroupless}. 
\end{proof}

\section{\bf Brody special, special, arithmetically special, and geometrically special varieties}
\label{sec:specialnotions}

In this section, we recall various notions of special. 

\subsection{Special varieties in the sense of Campana}
To begin, we describe Campana's notion of special from two perspectives. The first notion concerns the non-existence of certain sheaves, while the second notion characterizes specialness in terms of the non-existence of certain fibrations. 

\subsubsection{Special varieties via Bogomolov sheaves}
First, we recall the definition of specialness in terms of Bogomolov sheaves. 
To define these sheaves, let $p$ be a positive integer and let $\sL \subset \Omega_X^p$ be a saturated, rank one, coherent sheaf. We say that $\sL$ is a \cdef{Bogomolov sheaf} for $X$ if the Iitaka dimension $\kappa(X,\sL)$ of $\sL$ equals $ p>0$.

\begin{definition}[\protect{\cite{campana2004orbifolds}}]\label{def:specialBogomolov}
A proper variety $X$ is \cdef{special} if it has no Bogomolov sheaves.
\end{definition}

\begin{example}\label{exam:fundamental}
From this definition, we have several fundamental examples of special varieties.
It is immediate that a curve $C/k$ is special if and only if the genus of $C$ is less than two and that tori and abelian varieties are special. 
Also, rationally connected varieties are special since $\Sym^m(\Omega^p) = 0$ for any $p,m>0$. 
Finally, a variety with either first Chern number equal to zero or Kodaira dimension equal to zero is special (see \cite{campana2004orbifolds} for details).
\end{example}

\subsubsection{Special varieties via fibrations of general type}
Another way to think of special varieties is via fibrations to varieties of general type. 
Consider a fibration $f\colon X\to Y$, with $X$ and $Y$ smooth and projective. 
Recall that a {fibration} $f\colon X\to Y$ is \cdef{of general type} if $K_Y(\Delta_f)$ is big and $\dim Y \geq 1$ where $\Delta_f$ is an effective $\mathbb{Q}$-Cartier divisor encoding the multiple fibers of $f$. More precisely, for every irreducible divisor $E \subset Y$, let $f^*(E)$ be the scheme-theoretic inverse image of $E$ in $X$, and write 
\[
f^*(E)=R+ \sum_i m_f(E_i) E_i,
\]
where $E_i$ are irreducible divisors in $X$, $t_i \geq 0$ are integers, and the codimension of $R$ is at least two. Define $m_f(E):=\mathrm{inf}_{i}(t_i)$. The $\mQ$-Cartier divisor $\Delta_f$ above is defined as 
\[
\Delta_f:=\sum_{E}\left( 1 -\frac{1}{m_f(E)}\right)E
\]
and is called the \cdef{multiplicity divisor of $f$}, which encodes the defect of smoothness of the fibers.

\begin{definition}\label{def:specialfibration}
A proper variety $X$ is \cdef{special} if it does not admit a fibration of general type.
\end{definition}

It is a result of Campana \cite[Theorem 2.27]{campana2004orbifolds} that these two notions are equivalent. As a consequence of \autoref{def:specialfibration}, we have that a special variety does not dominate a positive dimensional variety of general type. Using this result, combined with some non-trivial arguments, allows us to deduce the specialness of the last two families of examples of \autoref{exam:fundamental}. 

\subsubsection{Special varieties that are not proper}
Most of the aforementioned work of Campana works in much larger generality. Indeed, there is a way to define the notion of special for orbifolds, which roughly speaking is a proper variety $\overline{X}$ equipped with a orbifold divisor \[
\Delta:= \sum_{E_i}\left( 1 -\frac{1}{m_i}\right)E_i,
\]
where $E_i$ are Cartier divisors of $\overline{X}$ and $m_i \in [1,2, \ldots, +\infty]$ are almost all $1$ (so that it is a finite sum).
 
Whenever all the $m_i$ that are not $1$ are $+\infty$, we recover the classical case of logarithmic geometry, that we revise briefly for successive applications to semi-abelian schemes. Let $X/K$ be a smooth variety and consider a smooth compactification $\overline{X}$ of $X$, with normal crossing divisor complement $\Delta$; we  call $(\overline{X},\Delta)$ a logarithmic pair. We define the logarithmic canonical divisor $K_{\overline{X},\Delta}:= K_{\overline{X}}+\Delta$
and we have a notion of logarithmic general type for $X$ in terms of the logarithmic Kodaira dimension of $K_{\overline{X},\Delta}$ (see \cite[Definition 5.3]{Vojta:IntegralPointsII}). 
There is also a notion of logarithmic Bogomolov sheaf for a logarithmic pair \cite[p.~542]{campana2004orbifolds}, where one substitutes $\sL \subset \Omega_X^p$ with $\sL \subset \Omega_{\overline{X}}^p(\log \Delta)$. 
The logarithmic analogue of \autoref{def:specialBogomolov} and \autoref{def:specialfibration}, both due to Campana in the more general settings of orbifolds, are the following. 
\begin{definition}\label{def:specialBogomolovlog}
A variety $X$ is \cdef{special} if $(\overline{X},\Delta)$  does not admit any logarithmic Bogomolov sheaves.
\end{definition}
\begin{definition}\label{def:specialfibrationlog}
A variety $X$ is \cdef{special} if $(\overline{X},\Delta)$ does not admit a fibration of logarithmic general type.
\end{definition}
Campana \cite[p.~542]{campana2004orbifolds} notes that \autoref{def:specialBogomolov} and \autoref{def:specialfibration} are equivalent via the same proof as in the setting where $\Delta = \emptyset$. 
We also note that the logarithmic Kodaira dimension is a birational invariant \cite[Example 7.2.6]{campanaMH}, hence the two definitions are independent of $\overline{X}$.

\subsection{Intermezzo on weakly special varieties}
There is a weaker notion of special, called weakly special, which closely resembles \autoref{def:specialfibration}. 

\begin{definition}
We say that $X/k$ is \cdef{weakly special} if there are no finite $k$-\'etale covers $X'\to X$ admitting a dominant rational map $X'\to Z'$ to a positive dimensional variety $Z'$ of general type. 
\end{definition}

\begin{remark}
Every special variety is weakly-special \cite[Proposition 9.29]{campana2004orbifolds}. Note that weakly special curves and surfaces are special \cite[Example 7.2 (6)]{campanaMH}, but already for threefolds the converse is not true; there is an example by Bogomolov and Tschinkel (c.f.~\cite[\S 8.7]{campanaMH}).
\end{remark}

\subsection{Brody special}
Let $k = \mC$. For a complex manifold $X^{\an}/\mC$, there is a (conjectural) complex analytic characterization of specialness.

\begin{definition}\label{defn:Brodyspecial}
A complex manifold $X^{\an}$ is \cdef{Brody-special} if there is a Zariski dense holomorphic map $\mC\to X^{\an}$. 
\end{definition}

\begin{example}\label{exam:Brody}
From this definition it is immediate that for a curve $C$ over $\mC$, $C^{\an}$ is Brody-special if and only if the genus of $C$ is less than two. Also, we have that a torus $\mC^{g}$ is Brody-special, and hence an abelian variety $A/\mC$ is Brody-special via the Riemann uniformization theorem. 
Recently, Campana--Winkelmann \cite{campana2019dense} proved that the complex analytification of a rationally connected variety is Brody-special. 
\end{example}

\subsection{Geometrically special varieties}
Recently, Javanpeykar--Rousseau \cite{javanpeykar2020albanese} defined a new notion of specialness, called geometrically special, which is conjecturally equivalent to Campana's original notion.

\begin{definition}[\protect{\cite{javanpeykar2020albanese}}]\label{defn:geometricallyspecial}
A variety $X/k$ is \cdef{geometrically special over $k$} if for every dense open subset $U\subset X$, there exists a smooth, quasi-projective, connected curve $C/k$, a point in $c\in C(k)$, a point $u\in U(k)$, and a sequence of morphisms $f_i\colon C\to X$ with $f_i(c) = u$ for $i = 1,2,\dots$ such that $C\times X$ is covered by the graphs $\Gamma_{f_i} \subset C\times X$ of these maps. 
\end{definition}

\begin{example}
In \cite[Propositions 2.14 and 3.1]{javanpeykar2020albanese}, the authors prove that a rationally connected variety and an abelian variety are geometrically special over $k$, which recovers the examples from \autoref{exam:fundamental} and \autoref{exam:Brody}. 
\end{example}

\subsection{Arithmetically special varieties}
The notion of special also has a (conjectural) arithmetic counterpart, that aims to capture when the rational points of a variety are potentially dense.  

\begin{definition}\label{defn:arithmeticallyspecial}
A proper variety $X/k$ is \cdef{arithmetically special over $k$} if there exists a finitely generated subfield $k'\subset k$ and a model $\sX$ for $X$ over $k'$ such that $\sX(k')$ is dense in $\sX$. 
\end{definition}

\begin{example}
As with previous examples, a curve $C/k$ is arithmetically special if and only if the genus of $C$ is less than two and $C$ can be defined over a number field. 
Indeed, it is easy to see that a genus zero curve is arithmetically special, and it is a well-known but not obvious fact that an elliptic curve is arithmetically special. Furthermore, Faltings theorem \cite{Faltings2} asserts that a curve of genus $g\geq 2$ is not arithmetically special. 
By \cite[Section 3]{HassetTschinkel:AbelianFibrations}, any abelian variety is arithmetically special. 
For further examples of arithmetically special varieties, we refer the reader to \cite{HarrisTsch, bogomolov2000density, laiNaka:uniformpotentialdensity}. 
\end{example}

\begin{remark}
All of the above notions of special are stable under birational morphisms, finite \'etale covers, and products. We refer the reader to \cite[Section 2]{javanpeykar2020albanese} for details. 
\end{remark}

\section{\bf A non-Archimedean analytic characterization of special and weakly special}
\label{sec:nonArchspecial}

In this section, we offer our definition of special for a $K$-analytic space and describe some basic properties of these special $K$-analytic spaces.

\begin{definition}\label{defn:Kanspecial}
We say that $\sX$ is \cdef{$K$-analytically special} if there exists a connected, finite type algebraic group $G/K$, a dense open subset $\sU\subset G^{\an}$ with $\codim(G^{\an}\setminus \sU) \geq 2$, and an analytic morphism $\sU \to \sX$ which is Zariski dense. 
\end{definition}

\begin{example}
From \autoref{defn:Kanspecial}, we can immediately find several examples of $K$-analytically special varieties. First, a curve $C/K$ is $K$-analytically special if and only if the the genus of $C$ is less than two, and second, a connected, finite type algebraic group is $K$-analytically special. 
\end{example}

While checking $K$-analytic specialness with big {analytic} opens of algebraic groups may seem unnatural, we do so in order to incorporate the following types of examples. 

\begin{example}\label{exam:whybig}
Let $A/K$ be a simple abelian surface with good reduction. From \autoref{defn:Kanspecial}, it is clear that $A\setminus\brk{0}$ is $K$-analytically special, but if we did not test specialness on big analytic opens of algebraic groups then $A \setminus\brk{0}$ would not be $K$-analytically special. 
To see this, we note that if $G/K$ is an algebraic group and $G^{\an} \to (A\setminus\brk{0})^{\an}$ a non-constant, analytic morphism, then composition $G^{\an} \to (A\setminus\brk{0})^{\an} \to A^{\an}$ will be the translate of a group homomorphism, and hence a point by our assumption that $A$ is simple.

Indeed, using Chevalley's decomposition theorem \cite[Theorem 1.1]{conradChev}, we may write $G$ as an extension of an abelian variety $B$ by a normal, affine group $H$. Note that each pair of points in $H$ will be connected by a $\mG_m$, and hence the image of $H^{\an} $ in $ A^{\an}$ will be a point due to work of Cherry \cite[Theorem 3.2]{Cherry}. Moreover, we have that the morphism $G^{\an} \to A^{\an}$ will factor through the analytification of the abelian variety $B^{\an}$, but since $B$ is proper, rigid analytic GAGA \cite{kopfGAGA} implies that the morphism $B^{\an}\to A^{\an}$ is algebraic i.e., it is the analytification of an algebraic morphism $B\to A$. It is well-known that any morphism $B\to A$ is the translate of a group homomorphism, and therefore the image of $G^{\an} \to A^{\an}$ will be the translate of an abelian subvariety. 
Since $A$ was assumed to be simple, the image of $G^{\an}$ in $A^{\an}$ will be a point, and moreover, $(A\setminus\brk{0})^{\an}$ would not be $K$-analytically special. 
 
We remark that the idea of testing specialness and hyperbolicity on big open of algebraic groups goes back to Lang  \cite{Lang} and was studied by Vojta in \cite{VojtaLangExc}. In this latter work, Vojta showed that for $A$ an abelian variety over $\mC$ and $U$ a dense open subset of $A$ with $\codim(A\setminus U)\geq 2$, $U^{\an}$ is Brody-special by \textit{loc.~cit.~}Section 4. 
\end{example} 

\subsection{Basic properties:~birational invariance, products, and ascending along finite \'etale covers}
For the remainder of this section, we prove that our notion of $K$-analytically special is preserved under birational morphisms, products, and finite \'etale covers. 

\begin{lemma}\label{lemma:birationalinvariance}
Let $\sX \dashrightarrow \sY$ be a bi-meromorphic morphism between proper, integral $K$- analytic spaces. Then, $\sX$ is $K$-analytically special if and only if $\sY$ is $K$-analytically special. 
\end{lemma}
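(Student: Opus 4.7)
The plan is to reduce the statement to the codimension bound for indeterminacy loci proved in \autoref{prop:meromorphicmapadic}, which was designed precisely to interact with the ``big open'' condition in \autoref{defn:Kanspecial}. By symmetry it suffices to show the forward implication. Assume $\sX$ is $K$-analytically special, so there is a connected, finite type algebraic group $G/K$, a dense open $\sU\subset G^{\an}$ with $\codim(G^{\an}\setminus\sU)\geq 2$, and a Zariski dense analytic morphism $\varphi\colon \sU\to \sX$.

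Let $\rho\colon \sX\dashrightarrow \sY$ be the given bi-meromorphic map, and let $\psi\colon \sU\dashrightarrow \sY$ be the composition $\rho\circ\varphi$, which is a meromorphic map in the sense of \autoref{defn:meromorphic}. Since $G$ is smooth (as we are in characteristic zero), \autoref{lemma:absoluteproperties}.(4) gives that $G^{\an}$, and hence its open $\sU$, is smooth and in particular normal. Because $\sY$ is proper and reduced (it is integral by hypothesis), \autoref{prop:meromorphicmapadic} applies and tells us that the indeterminacy locus $\sI\subset \sU$ of $\psi$ is an analytic subset of codimension at least two. Setting $\sU':=\sU\setminus \sI$ therefore produces an honest analytic morphism $\psi\colon \sU'\to \sY$, and since
\[
G^{\an}\setminus \sU' \;=\; \bigl(G^{\an}\setminus \sU\bigr)\;\cup\; \sI
\]
is a union of two analytic subsets of codimension at least two, we have $\codim(G^{\an}\setminus \sU')\geq 2$ as required by \autoref{defn:Kanspecial}.

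It remains to verify that $\psi$ is Zariski dense. Let $\sX_0\subset \sX$ and $\sY_0\subset \sY$ be Zariski dense opens on which $\rho$ restricts to an isomorphism $\sX_0\xrightarrow{\sim}\sY_0$. Because $\varphi(\sU)$ is Zariski dense in $\sX$, the intersection $\varphi^{-1}(\sX_0)\cap \sU'$ is a non-empty (in fact Zariski dense) open of $\sU'$ whose image in $\sX_0$ is Zariski dense; transporting along the isomorphism $\sX_0\cong \sY_0$, one sees that $\psi(\sU')$ meets $\sY_0$ in a Zariski dense subset of $\sY_0$, which in turn is Zariski dense in $\sY$. Hence $\sY$ is $K$-analytically special, and the reverse implication follows by applying the same argument to the inverse bi-meromorphic map $\sY\dashrightarrow \sX$.

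The one delicate step is the extraction of a genuine analytic morphism from the a priori only meromorphic composition $\rho\circ \varphi$; this is exactly where \autoref{prop:meromorphicmapadic} (i.e.\ the non-Archimedean analogue of Remmert's theorem) does the essential work, and it is what motivates the codimension-two condition baked into the definition of $K$-analytically special in the first place.
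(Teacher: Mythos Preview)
Your proof is correct and follows essentially the same strategy as the paper: compose $\varphi$ with the bi-meromorphic map, invoke smoothness of $G^{\an}$ via \autoref{lemma:absoluteproperties}.(4), and apply \autoref{prop:meromorphicmapadic} to extract an honest morphism on a big open. Your treatment is in fact slightly more detailed than the paper's---you spell out why the union $(G^{\an}\setminus\sU)\cup\sI$ still has codimension $\geq 2$ and why the resulting map to $\sY$ remains Zariski dense, points the paper leaves implicit.
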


\begin{proof}
Suppose that $\sX$ is $K$-analytically special so there exists a connected, finite type algebraic group $G/K$, an open dense subset $\sU \subset G^{\an}$ with $\codim(G^{\an}\setminus \sU) \geq 2$, and a Zariski dense, analytic morphism $\varphi\colon\sU \to \sX$. 
Since $\varphi$ is Zariski dense, the composition $\sU \to \sX \dashrightarrow \sY$ defines a meromorphic map from $G^{\an} \dashrightarrow \sY$. 
By  \cite[\href{https://stacks.math.columbia.edu/tag/047N}{Tag 047N}]{stacks-project} and \autoref{lemma:absoluteproperties}.(4), $G^{\an}$ is smooth, and so by \autoref{prop:meromorphicmapadic}, this meromorphic map is defined on some dense open $\sU'$ of $G^{\an}$ with $\codim(G^{\an}\setminus \sU') \geq 2$. Therefore, $\sY$ is $K$-analytically special. To conclude, we note that the proof of the converse statement follows in the exact same manner. 
\end{proof}

\begin{remark}\label{remark:whybiganalytic}
We remark that if we tested $K$-analytic specialness on big \textit{algebraic} opens of connected, finite type algebraic groups, then bi-meromorphic invariance would not follow. Indeed, the proof of \autoref{lemma:birationalinvariance} boils down to showing that a meromorphic map between $K$-analytic spaces is defined on an open subset whose complement has codimension at least $2$. However, we do not know of a way to guarantee that this open subset is algebraic, and therefore, it is crucial that we test $K$-analytic specialness on big \textit{analytic} opens of connected, finite type algebraic groups. 
\end{remark}

\begin{lemma}
Let $\sX/K$ and $\sY/K$ be $K$-analytic spaces which are both $K$-analytically special. Then the product $\sX\times \sY$ is $K$-analytically special. 
\end{lemma}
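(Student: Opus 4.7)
The plan is straightforward: assemble the witnessing data for $\sX$ and $\sY$ into witnessing data for the product. Suppose $\sX$ is $K$-analytically special via a triple $(G_1,\sU_1,\varphi_1)$ and $\sY$ is $K$-analytically special via $(G_2,\sU_2,\varphi_2)$, where each $G_i$ is a connected, finite type algebraic group over $K$, each $\sU_i \subset G_i^{\an}$ is a dense open with $\codim(G_i^{\an}\setminus \sU_i) \geq 2$, and each $\varphi_i\colon \sU_i \to \sX_i$ (with $\sX_1 = \sX$, $\sX_2 = \sY$) is a Zariski dense analytic morphism. Set $G \coloneqq G_1 \times_K G_2$, which is again a connected, finite type algebraic group over $K$, and note that $G^{\an} \cong G_1^{\an}\times G_2^{\an}$ since analytification commutes with fibered products. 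Take $\sU \coloneqq \sU_1 \times \sU_2 \subset G^{\an}$, and consider $\varphi \coloneqq \varphi_1 \times \varphi_2\colon \sU \to \sX \times \sY$.

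First I would verify the codimension hypothesis. Writing
\[
G^{\an}\setminus \sU \;=\; \bigl((G_1^{\an}\setminus \sU_1) \times G_2^{\an}\bigr) \,\cup\, \bigl(G_1^{\an} \times (G_2^{\an}\setminus \sU_2)\bigr),
\]
each factor in the product expression contributes a closed subset of dimension at most $\dim G_1^{\an} + \dim G_2^{\an} - 2$, so $\codim(G^{\an}\setminus \sU) \geq 2$. Density of $\sU$ in $G^{\an}$ is immediate from density of each $\sU_i$ in $G_i^{\an}$. It remains to show that $\varphi$ is Zariski dense.

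For Zariski density, let $\sZ \subset \sX \times \sY$ be the smallest closed analytic subspace containing the image of $\varphi$; our task is to show $\sZ = \sX \times \sY$. For each $y \in \varphi_2(\sU_2)$, the fiber $\sZ_y \coloneqq \sZ \cap (\sX \times \{y\})$ is a closed analytic subspace of $\sX$ that contains $\varphi_1(\sU_1)$; Zariski density of $\varphi_1$ forces $\sZ_y = \sX \times \{y\}$. Hence $\sX \times \varphi_2(\sU_2) \subset \sZ$. Now for each $x \in \sX$, the fiber $\{x\}\times \sY \cap \sZ$ is closed in $\{x\}\times \sY$ and contains $\{x\} \times \varphi_2(\sU_2)$, so Zariski density of $\varphi_2$ yields $\{x\} \times \sY \subset \sZ$, giving $\sZ = \sX \times \sY$.

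I do not expect a serious obstacle here: the codimension bookkeeping is routine, and the product Zariski-density argument is the standard sliced-fiber argument. The only mild subtlety is keeping the definition of ``Zariski dense morphism'' consistent (i.e., not contained in any proper closed analytic subspace of the target) so that the fiberwise argument applies verbatim.
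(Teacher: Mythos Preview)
Your proposal is correct and follows essentially the same approach as the paper: take the product group, the product big open, and the product morphism, then check the codimension and density conditions. In fact, you supply the fiberwise argument for Zariski density of $\varphi_1\times\varphi_2$ that the paper leaves unproved.
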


\begin{proof}
Let $G$ (resp.~$G'$) be a connected, finite type algebraic group over $K$, let $\sU \subset G^{\an}$ (resp.~$\sU'\subset G^{'\an}$) be a dense open subset with $\codim(G^{\an}\setminus \sU) \geq 2$ (resp.~$\codim(G^{'\an}\setminus \sU') \geq 2$) such that there exists a Zariski dense analytic morphism $\varphi\colon \sU \to \sX$ (resp.~$\varphi'\colon \sU'\to \sY$). 
We have that $G \times G'$ is a connected, finite type algebraic group over $K$ and that the fibered products $G^{\an} \times G^{'\an}$ and $\sU \times \sU'$ exist in the category of locally of finite type adic spaces over $\Spa(K,K^{\circ})$ by \cite[1.2.2.(a)]{huber}. 
The same argument from \cite[\href{https://stacks.math.columbia.edu/tag/01JR}{Tag 01JR}]{stacks-project} tells us that $\sU \times \sU'$ is a dense open subset of $G^{\an} \times G^{'\an}$. Moreover, we claim that $\codim((G^{\an} \times G^{'\an}) \setminus (\sU \times \sU'))\geq 2$.
Indeed, using \cite[Theorem 5.1.3.1]{conrad-conn} and \cite[1.8.11.(i)]{huber}, we deduce that  $\dim(G^{\an} \times G^{'\an}) = \dim(G) + \dim(G')$ and the claim now follows from \cite[1.8.8]{huber}. 
To conclude, we have that the morphism $(\varphi \times \varphi')\colon \sU \times \sU' \to \sX \times \sY$ is Zariski dense, and so $\sX\times \sY$ is $K$-analytically special. 
\end{proof}

\begin{lemma}\label{lemma:ascendfiniteetale}
Let $\sX\to \sY$ be a finite \'etale morphism between proper $K$-analytic spaces. Then $\sX$ is $K$-analytically special if and only if $\sY$ is $K$-analytically special.
\end{lemma}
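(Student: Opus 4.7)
The forward direction is immediate: if $\sX$ is $K$-analytically special, witnessed by a Zariski dense analytic morphism $\varphi\colon \sU \to \sX$ from a big open $\sU \subset G^{\an}$, then composing with the surjective finite \'etale morphism $\pi\colon \sX \to \sY$ yields a Zariski dense analytic morphism $\pi\circ\varphi\colon \sU \to \sY$, exhibiting $\sY$ as $K$-analytically special.

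For the converse, suppose $\sY$ is $K$-analytically special, witnessed by $\varphi\colon \sU \to \sY$ from a big open $\sU \subset G^{\an}$ with $\codim(G^{\an}\setminus\sU) \geq 2$. The first step is to form the fiber product $\tilde\sU := \sU \times_{\sY} \sX$, which is finite \'etale over $\sU$. Since $\varphi(\sU)$ is Zariski dense in $\sY$ and $\pi$ is a finite \'etale surjection, the second projection $\tilde\sU \to \sX$ has Zariski dense image. Passing to matching connected components of $\sX$ and $\sY$, one can select a connected component $\sU' \subset \tilde\sU$ whose image in $\sX$ is Zariski dense, and the induced morphism $\sU' \to \sU$ remains finite \'etale.

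The next step is to extend this cover from $\sU$ to all of $G^{\an}$. Since $G^{\an}$ is smooth (by \autoref{lemma:absoluteproperties} and the fact that algebraic groups in characteristic zero are smooth) and $\codim(G^{\an}\setminus \sU) \geq 2$, I would invoke the Zariski-Nagata purity theorem in the adic setting to extend $\sU' \to \sU$ uniquely to a finite \'etale cover $\tilde{G} \to G^{\an}$. Then, using an algebraization result for finite \'etale covers of the analytification of a connected algebraic group (which can be reduced via the Chevalley decomposition of $G$ to the abelian variety case, handled by rigid GAGA, and to the affine case, where the fundamental group is well understood), the cover $\tilde G \to G^{\an}$ is the analytification of a finite \'etale algebraic morphism $G' \to G$. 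Fixing a lift of the identity, one equips $G'$ with the structure of a connected finite type algebraic group making $G' \to G$ into an isogeny; then $\sU'$ is naturally a dense open of $G'^{\an}$ with $\codim(G'^{\an}\setminus \sU') = \codim(G^{\an}\setminus \sU) \geq 2$, since finite \'etale morphisms preserve codimension. The composition $\sU' \to \tilde\sU \to \sX$ is Zariski dense by construction, so $\sX$ is $K$-analytically special.

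The principal obstacles are precisely the two technical inputs used in the second step: the purity-of-the-branch-locus statement allowing the finite \'etale cover to be propagated across the codimension-$\geq 2$ complement, and the algebraization statement identifying the resulting analytic cover of $G^{\an}$ with the analytification of an algebraic isogeny. Both rely on non-trivial features of the adic category, and one should expect to cite (or adapt) existing non-Archimedean analogues of Zariski-Nagata purity and Grauert-Remmert-type algebraization for this argument to go through cleanly.
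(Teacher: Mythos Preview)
Your proposal is correct and follows essentially the same strategy as the paper: form the fiber product over $\sY$, use purity of the branch locus over the smooth $G^{\an}$ to extend the finite \'etale cover across the codimension-$\geq 2$ complement, algebraize, and endow the cover with a group structure. Two minor differences are worth noting. First, for the algebraization step the paper invokes L\"utkebohmert's non-Archimedean Riemann existence theorem directly rather than your proposed reduction via the Chevalley decomposition; this is cleaner and avoids the ad hoc splitting into abelian and affine parts. Second, for the Zariski density of the map to $\sX$, the paper gives an explicit argument by contradiction (if the image were not dense, the complement of its closure would push forward under the open \'etale map to a nonempty open in $\sY$ missed by $\sV$), whereas you assert this more briskly; your claim that one can pick a connected component with Zariski dense image is fine but deserves a sentence (finitely many components, closures cover, irreducibility of $\sX$).
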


\begin{proof}
If $\sX$ is $K$-analytically special, then it follows that $\sY$ is $K$-analytically special since $\sX\to \sY$ is finite \'etale.

Now suppose that $\sY$ is $K$-analytically special. 
Let $G/K$ be a connected, finite type algebraic group, let $\sU \subset G^{\an}$ open dense such that $\codim(G^{\an}\setminus \sU) \geq 2$, and let $\sU \to \sY$ be a Zariski dense, analytic morphism. 
By  \cite[1.2.2.(a)]{huber}, we can construct the fibered product diagram
\[
\begin{tikzcd}
\sV \arrow{r}\arrow{d} & \sX\arrow{d} \\
\sU\arrow{r} & \sY,
\end{tikzcd}
\]
and since finite \'etale covers are stable under base change \cite[1.4.5.(i) \& 1.6.7.(iv)]{huber}, we have that $\sV \to \sU$ is finite \'etale. 
By \cite[\href{https://stacks.math.columbia.edu/tag/047N}{Tag 047N}]{stacks-project} and \autoref{lemma:absoluteproperties}.(4), $G^{\an}$ is smooth over $K$, and so purity of the branch locus (see e.g., \cite[Chapter 3, Theorem 2.1.11]{andre-per} or \cite[Corollary 2.15]{Hanse:Vanishing}) implies that the finite \'etale morphism $\sV\to \sU$ extends to a finite \'etale morphism $\sG'\to G^{\an}$. 
By the non-Archimedean analogue of Riemann's existence theorem \cite[Theorem 3.1]{Lutkebohmert:Riemannexistence}, the finite \'etale morphism $\sG'\to G^{\an}$ algebraizes, in particular there is a finite \'etale morphism of locally of finite type schemes $G'\to G$ whose analytification coincides with $\sG'\to G^{\an}$. 
Note that every connected component $G''$ of $G'$ has the structure of a connected, finite type group scheme over $K$, and with this structure the morphism $G''\to G$ is a homomorphism. 
Since smooth morphisms preserve codimension, we have that $\codim(G^{''\an}\setminus \sV) \geq 2$, and note that $\sV \to \sU \to \sY$ (and hence $\sV \to \sX \to \sY$) is Zariski dense.

To conclude, we need to show that the image of $\sV \to \sX$ is Zariski dense. 
Suppose to the contrary. 
Let $\sZ$ denote the complement of the Zariski closure of the image of $\sV$ inside of $\sX$. 
By assumption, $\sZ$ is a non-empty open subset of $\sX$. 
Since $\sX\to \sY$ is \'etale, the image of $\sZ$ in $\sY$ is open \cite[1.7.8]{huber}, and so we have an open subset of $\sY$ which is not in the image of $\sV$. 
However, this contradicts the fact that $\sV \to \sX \to \sY$ is Zariski dense, and therefore, we have that $\sV \to \sX $ is Zariski dense and so $\sX$ is $K$-analytically special.
\end{proof}

Recall that one can characterize special varieties as those which do not admit a fibration of general type. 
In the non-Archimedean setting, \autoref{thm:equivalentBrody} allows us to show that a $K$-analytically special variety cannot dominant a pseudo-$K$-analytically Brody hyperbolic variety.

\begin{theorem}[= \autoref{thm:nofibrationtoBrody}]
Let $\sX$ and $\sY$ be irreducible, reduced, separated $K$-analytic spaces over $K$. 
If $\sY$ is $K$-analytically special and $\sX$ is a positive dimensional pseudo-$K$-analytically Brody hyperbolic variety, then there is no dominant morphism $\sY \to \sX$.
\end{theorem}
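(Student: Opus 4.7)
The plan is to argue by contradiction, using the definition of $K$-analytically special to produce a morphism from a big analytic open of an algebraic group into $\sX$, and then invoke \autoref{thm:equivalentBrody} to force this morphism to miss a Zariski dense open of $\sX$, contradicting dominance.

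Suppose for contradiction that there exists a dominant morphism $f\colon \sY \to \sX$. Since $\sY$ is $K$-analytically special, unpacking \autoref{defn:Kanspecial} produces a connected finite type algebraic group $G/K$, a dense open $\sU \subset G^{\an}$ with $\codim(G^{\an}\setminus \sU) \geq 2$, and an analytic morphism $\varphi\colon \sU \to \sY$ whose image is Zariski dense in $\sY$. Form the composition
\[
\psi \colonequals f \circ \varphi \colon \sU \longrightarrow \sX.
\]
Since $\varphi(\sU)$ is Zariski dense in $\sY$ and $f$ is dominant, the image $\psi(\sU) = f(\varphi(\sU))$ is Zariski dense in $\sX$. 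In particular, because $\sX$ has positive dimension, $\psi$ cannot be constant.

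Next, invoke the hypothesis that $\sX$ is pseudo-$K$-analytically Brody hyperbolic, which by definition produces a proper closed subset $\sD \subsetneq \sX$ such that $(\sX,\sD)$ is $K$-analytically Brody hyperbolic (\autoref{defn:KBrodyMod}). Since $\sX$ is irreducible, reduced, and separated, we may apply \autoref{thm:equivalentBrody} to the connected algebraic group $G$ and the dense open $\sU \subset G^{\an}$: the non-constant analytic morphism $\psi \colon \sU \to \sX$ must factor through $\sD$.

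This yields the required contradiction. Indeed, $\psi(\sU)$ is Zariski dense in $\sX$ but is also contained in the proper closed subset $\sD \subsetneq \sX$, which is impossible. Hence no dominant morphism $\sY \to \sX$ exists. The only genuine content of the argument is \autoref{thm:equivalentBrody}, which permits us to test Brody hyperbolicity modulo $\sD$ on analytic morphisms from dense opens of arbitrary connected algebraic groups, precisely matching the source of maps supplied by the definition of $K$-analytically special; the rest is formal.
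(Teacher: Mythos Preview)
Your proof is correct and follows essentially the same approach as the paper: compose the Zariski dense map $\sU \to \sY$ with the dominant $f\colon \sY \to \sX$, observe the result is Zariski dense (hence non-constant) in $\sX$, and invoke \autoref{thm:equivalentBrody} to force the image into the proper closed subset $\sD$, a contradiction. You are somewhat more explicit than the paper in unpacking the definition of pseudo-$K$-analytically Brody hyperbolic and in justifying non-constancy, but the substance is identical.
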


\begin{proof}
Suppose there existed a dominant morphism from $\sY \to \sX$. 
Then there exists a connected, finite type algebraic group $G$, a dense open subset $\sU$ of $G^{\an}$ with $\codim(G^{\an}\setminus \sU)\geq 2$, and an analytic morphism $\sU\to \sY\to \sX$, which is Zariski dense since the composition of dominant morphisms is dominant. 
However, this contradicts \autoref{thm:equivalentBrody}, and therefore, we have that there cannot exist a dominant morphism from $\sY\to \sX$.
\end{proof}

To conclude this section, we define the notion of $K$-analytically weakly special. 

\begin{definition}
We say that $\sX$ is \cdef{$K$-analytically weakly special} if there are no finite $K$-\'etale covers $\sX'\to \sX$ admitting a dominant meromorphic map $\sX'\to \sZ'$ to a positive dimensional $K$-analytic space $\sZ'$ which is pseudo-$K$-analytically Brody hyperbolic. 
\end{definition}

With this definition, we have the following corollary to \autoref{lemma:ascendfiniteetale} and \autoref{thm:nofibrationtoBrody}.

\begin{corollary}
A $K$-analytically special $K$-analytic space is $K$-analytically weakly special. 
\end{corollary}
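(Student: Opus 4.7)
The plan is to argue by contradiction. Suppose $\sX$ is $K$-analytically special, and suppose there exist a finite $K$-\'etale cover $\pi\colon \sX' \to \sX$ together with a dominant meromorphic map $f\colon \sX' \dashrightarrow \sZ'$ to a positive dimensional, pseudo-$K$-analytically Brody hyperbolic $K$-analytic space $\sZ'$. The strategy is to push the Zariski dense morphism witnessing that $\sX$ is $K$-analytically special up the finite \'etale cover, compose with the dominant meromorphic map, and thereby produce a non-constant analytic morphism from a dense open of the analytification of a connected algebraic group into $\sZ'$ whose image is Zariski dense. This will directly contradict \autoref{thm:equivalentBrody}.

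First I would invoke \autoref{lemma:ascendfiniteetale} to conclude that $\sX'$ is also $K$-analytically special. Unpacking the definition, there exist a connected, finite type algebraic group $G'/K$, a dense open $\sU' \subset G'^{\an}$ with $\codim(G'^{\an}\setminus \sU')\ge 2$, and a Zariski dense analytic morphism $\varphi'\colon \sU'\to \sX'$. Write $\sV \subset \sX'$ for the (Zariski open) domain of definition of $f$; its complement is a proper analytic subset, so by Zariski density of $\varphi'(\sU')$ the preimage $\sU'' := (\varphi')^{-1}(\sV)$ is a nonempty open subset of $\sU'$. Since $G'^{\an}$ (and hence $\sU'$) is irreducible, $\sU''$ is dense in $\sU'$ and therefore a dense open subset of $G'^{\an}$.

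Next I would verify that the composition $g := f\circ \varphi'|_{\sU''}\colon \sU''\to \sZ'$ has Zariski dense image, and in particular is non-constant. Indeed, $\sU''$ is dense in the irreducible $\sU'$, so $\varphi'(\sU'')$ remains Zariski dense in $\sX'$; combining this with the dominance of $f$, a short Zariski closure argument (any Zariski closed $T \subsetneq \sZ'$ containing $g(\sU'')$ would force $f^{-1}(T)$ to contain $\varphi'(\sU'')$, hence to be all of $\sV$ by Zariski density in $\sX'$, contradicting dominance of $f$) shows that $g(\sU'') = f(\varphi'(\sU''))$ is Zariski dense in $\sZ'$.

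Finally, choose a proper closed subset $\sD \subsetneq \sZ'$ witnessing that $\sZ'$ is pseudo-$K$-analytically Brody hyperbolic modulo $\sD$. By \autoref{thm:equivalentBrody}, applied to the connected algebraic group $G'$ and the dense open $\sU'' \subset G'^{\an}$, the non-constant analytic morphism $g$ must factor through $\sD$, contradicting the Zariski density of $g(\sU'')$. The substantive obstacles are modest: checking that $\sU''$ is dense in $G'^{\an}$ and that composition with a dominant meromorphic map preserves Zariski density of the image; the real content is packaged in \autoref{lemma:ascendfiniteetale} and \autoref{thm:equivalentBrody}.
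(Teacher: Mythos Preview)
Your proposal is correct and follows essentially the same approach as the paper, which presents the corollary without proof as an immediate consequence of \autoref{lemma:ascendfiniteetale} and \autoref{thm:nofibrationtoBrody}. The only difference is that you handle the meromorphic map explicitly---restricting to its domain of definition and invoking \autoref{thm:equivalentBrody} directly on the resulting dense open $\sU''$---whereas the paper phrases the contradiction via \autoref{thm:nofibrationtoBrody}; since \autoref{thm:nofibrationtoBrody} is itself a direct consequence of \autoref{thm:equivalentBrody}, this is just unpacking one level further, and indeed your treatment is arguably more careful given that the definition of $K$-analytically weakly special involves meromorphic maps rather than morphisms.
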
 

\section{\bf $K$-analytically special subvarieties of semi-abelian varieties and properties of quasi-Albanese maps}
\label{sec:specialsubvarietiessemiabelian}

In this section, we will prove \autoref{thm:closedabelianspecial} and \autoref{coro:specialabelian} and use these results to deduce some properties of the quasi-Albanese map of a  $K$-analytically special variety.

To begin, we recall the statement of \autoref{thm:closedabelianspecial}. 

\begin{theorem}[= \autoref{thm:closedabelianspecial}]
Let $X$ be a closed subvariety of a semi-abelian variety $G$ over $K$. Then $X$ is the translate of a semi-abelian subvariety if and only if $X^{\an}$ is $K$-analytically special.
\end{theorem}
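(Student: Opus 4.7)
The plan is to prove the two implications separately, with the forward direction being essentially tautological and the backward direction relying on Vojta's Ueno fibration together with the extension of \cite{MorrowNonArchGGLV} recorded as \autoref{prop:Brodyequivalence}.

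For the forward direction, suppose $X = g + H$ for some $g \in G(K)$ and some semi-abelian subvariety $H \subseteq G$. Then translation by $g$ gives an isomorphism $H^{\an} \xrightarrow{\sim} X^{\an}$ of $K$-analytic spaces, and taking the connected finite type algebraic group to be $H$ itself, the dense open subset $\sU = H^{\an} \subseteq H^{\an}$ (so that the codimension of the complement is infinite), and the analytic morphism to be the composition $H^{\an} \xrightarrow{\sim} X^{\an}$, we see directly from \autoref{defn:Kanspecial} that $X^{\an}$ is $K$-analytically special.

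For the backward direction, I would argue by contraposition: assume $X$ is not a translate of a semi-abelian subvariety and deduce that $X^{\an}$ is not $K$-analytically special. The main tool is Vojta's structure theorem (see \cite[Theorem~5.15]{Vojta:IntegralPointsII}, refining work of Ueno, Iitaka, Kawamata): if $B(X) \subseteq G$ denotes the maximal semi-abelian subvariety such that $X + B(X) = X$, then the quotient $X/B(X)$ embeds as a closed subvariety of the semi-abelian variety $G/B(X)$, and $X/B(X)$ is of logarithmic general type. Moreover, since $X$ is not a translate of a semi-abelian subvariety, $X/B(X)$ has positive dimension.

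Now I would invoke \autoref{prop:Brodyequivalence}, which asserts that a closed subvariety of a semi-abelian variety is of logarithmic general type if and only if its analytification is pseudo-$K$-analytically Brody hyperbolic in the sense of \autoref{defn:KBrodyMod}. Applied to $X/B(X) \subseteq G/B(X)$, this yields that $(X/B(X))^{\an}$ is a positive-dimensional pseudo-$K$-analytically Brody hyperbolic $K$-analytic space. The quotient morphism $X \to X/B(X)$ is surjective, hence dominant, and so the induced analytic morphism $X^{\an} \to (X/B(X))^{\an}$ is also dominant. If $X^{\an}$ were $K$-analytically special, then \autoref{thm:nofibrationtoBrody} would forbid this dominant morphism, giving the desired contradiction.

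The main obstacle is the backward direction, and the crux of the argument lies in correctly combining the three external inputs (Vojta's Ueno-type fibration, the extended equivalence \autoref{prop:Brodyequivalence}, and \autoref{thm:nofibrationtoBrody}). All three are already in hand at this point of the paper, so the argument should be short; the only subtle point is verifying that $X/B(X)$ is indeed a \emph{closed} subvariety of the semi-abelian variety $G/B(X)$ (so that \autoref{prop:Brodyequivalence} applies), which is part of the content of Vojta's theorem.
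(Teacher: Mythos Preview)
Your proof is correct and follows essentially the same route as the paper: the forward direction is immediate, and the backward direction uses the Ueno fibration $X \to X/B(X,G)$, Vojta's theorem that the base is of logarithmic general type, the fact that this forces the base to be pseudo-$K$-analytically Brody hyperbolic, and then \autoref{thm:nofibrationtoBrody} to conclude the base is a point. One small imprecision: \autoref{prop:Brodyequivalence} does not itself assert the equivalence ``log general type $\Leftrightarrow$ pseudo-$K$-analytically Brody hyperbolic''; rather, it shows that the new \autoref{defn:KBrodyMod} agrees with the older definition for closed subvarieties of semi-abelian varieties, so that \autoref{thm:starting_point} (i.e., \cite{MorrowNonArchGGLV}) carries over --- the paper phrases this step by citing \autoref{thm:starting_point} directly.
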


To prove Theorem \ref{thm:closedabelianspecial}, we will use some deep theorems about closed subvarieties of semi-abelian varieties. 
First, we recall progress on the Green--Griffiths--Lang--Vojta conjecture for closed subvarieties of semi-abelian varieties.

\begin{theorem}[\protect{\cite{Abram, Nogu, MorrowNonArchGGLV}}] \label{thm:starting_point}
Let $X$ be a closed subvariety of a semi-abelian variety $G$ over $K$. Then, $X$ is of logarithmic general type if and only if $X^{\an}$ is pseudo-$K$-analytically Brody hyperbolic. 
\end{theorem}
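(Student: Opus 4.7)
The plan is to route the equivalence through the classical algebro-geometric characterization of log general type subvarieties of semi-abelian varieties due to Abramovich and Noguchi \cite{Abram, Nogu}. Writing $\Sp(X) \subseteq X$ for the union of all positive-dimensional translates of semi-abelian subvarieties of $G$ that are contained in $X$, the content of \cite{Abram, Nogu} is that $X$ is of log general type if and only if $\Sp(X) \subsetneq X$ is a proper Zariski-closed subset. I will therefore identify $\Sp(X)^{\an}$ as the natural exceptional subset, and show that $X^{\an}$ is $K$-analytically Brody hyperbolic modulo $\Sp(X)^{\an}$ exactly when $\Sp(X) \subsetneq X$.

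For the forward direction, assume $\Sp(X) \subsetneq X$ and verify the two bullets of \autoref{defn:KBrodyMod} with $\sD = \Sp(X)^{\an}$. For the first bullet, let $f\colon \mG_{m,K}^{\an} \to X^{\an}$ be non-constant and consider the composition with $X^{\an} \hookrightarrow G^{\an}$. Results of Cherry \cite{Cherry} on non-Archimedean analytic maps into semi-abelian varieties imply that the Zariski closure of the image of $f$ is a translate of a positive-dimensional semi-abelian subvariety of $G$ contained in $X$, hence lies in $\Sp(X)$. For the second bullet, let $f\colon \sU \to X^{\an}$ be non-constant with $\sU \subset A^{\an}$ a big open of an abelian variety $A$. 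The composition $\sU \to X^{\an} \hookrightarrow G^{\an}$ extends, by \autoref{thm:extendanalyticmorphism}, to an analytic morphism $A^{\an} \to G^{\an}$; since $A^{\an}$ is proper, its image is a proper closed analytic subvariety of $G^{\an}$, which by rigid-analytic GAGA is the analytification of a closed subvariety contained in a translate of an abelian subvariety of $G$. Standard rigidity of morphisms from abelian varieties into semi-abelian ones then forces the extended morphism to be the translate of a group homomorphism, so the Zariski closure of the image of $f$ is again a translate of a positive-dimensional semi-abelian subvariety of $G$ lying in $X$, and hence in $\Sp(X)$.

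For the reverse direction, I argue contrapositively. Suppose $\Sp(X) = X$, so every point of $X$ lies on some positive-dimensional translate $g_0 + H \subseteq X$ of a semi-abelian subvariety $H \subseteq G$. Given any proper closed analytic subset $\sD \subsetneq X^{\an}$, Zariski density of $K$-rational points provides $x \in (X \setminus \sD)(K)$, and we may choose a translate $g_0 + H \subseteq X$ with $x \in g_0 + H$. Writing $H$ as an extension of an abelian variety $A_H$ by a torus $T_H$, either $T_H$ is non-trivial, in which case composing a non-trivial cocharacter $\mG_{m,K} \hookrightarrow T_H \hookrightarrow H$ with translation by $g_0$ and the inclusion $g_0 + H \hookrightarrow X$ produces a non-constant analytic morphism $\mG_{m,K}^{\an} \to X^{\an}$ whose image meets $X \setminus \sD$ and therefore does not factor through $\sD$; or $H = A_H$ is abelian, in which case the morphism $A_H^{\an} \to X^{\an}$ (from the big open $\sU = A_H^{\an}$ itself) has image not contained in $\sD$. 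Either way, $(X^{\an}, \sD)$ fails to be $K$-analytically Brody hyperbolic for every proper $\sD$, so $X^{\an}$ is not pseudo-$K$-analytically Brody hyperbolic.

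The principal obstacle is the big-open-of-abelian-variety case in the forward direction, which requires two non-trivial inputs: the extension result \autoref{thm:extendanalyticmorphism} to promote an analytic morphism defined only on a codimension-$\geq 2$ open of $A^{\an}$ to an honest morphism on all of $A^{\an}$, followed by a rigid-analytic GAGA and rigidity argument to algebraize the extension as a translate of a group homomorphism. The $\mG_m$-case reduces quickly to Cherry's theorem, and the reverse direction is elementary once the Abramovich--Noguchi characterization of $\Sp(X)$ is taken as input; any discrepancy between the Berkovich setting in which Cherry's theorem was originally established and the adic setting here is harmless via the equivalence \autoref{thm:comparision}.
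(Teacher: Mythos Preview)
Your proposal is correct and uses the same essential ingredients as the paper, though organized differently. The paper does not give a self-contained proof of \autoref{thm:starting_point}: it quotes the result from \cite{MorrowNonArchGGLV} (where pseudo-$K$-analytic Brody hyperbolicity was tested on \emph{algebraic} big opens of abelian varieties) and then devotes Subsection~\ref{appendix} to showing, via \autoref{thm:extendanalyticmorphism} and \autoref{prop:extendalgebraic}, that this is equivalent to the new \autoref{defn:KBrodyMod} in the semi-abelian setting (\autoref{prop:Brodyequivalence}). You instead unpack the argument of \cite{MorrowNonArchGGLV} directly: you invoke the Abramovich--Noguchi characterization via $\Sp(X)$, handle the $\mG_m^{\an}$-bullet with Cherry's rigidity (the image lands in a torus coset, and invertible analytic functions on $\mG_m^{\an}$ are monomials up to constants), and handle the abelian-variety bullet by first extending across the codimension-$\geq 2$ locus with \autoref{thm:extendanalyticmorphism} and then algebraizing via GAGA and rigidity---exactly the content of \autoref{prop:extendalgebraic}. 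Your reverse direction is the straightforward contrapositive already implicit in \cite{MorrowNonArchGGLV}. So the two approaches agree on every substantive step; the only difference is that the paper treats \cite{MorrowNonArchGGLV} as a black box and isolates the ``analytic big open $\Rightarrow$ algebraic big open'' upgrade as a standalone proposition, whereas you fold everything into one direct argument.
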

  
We note that \autoref{thm:starting_point} was proved using the definition of pseudo-$K$-analytically Brody hyperbolic from \cite{MorrowNonArchGGLV}, and we will need to know that this result holds with our new definition (\autoref{defn:KBrodyMod}).
The proof of this fact will occupy the next subsection. 

\subsection{\bf Extending meromorphic maps to analytifications of semi-abelian varieties}\label{appendix}
The goal of this subsection is to prove the following extension result concerning meromorphic maps from smooth, irreducible analytic space to analytifications of semi-abelian varieties. 

\begin{theorem}\label{thm:extendanalyticmorphism}
Let $\sZ$ be a smooth, irreducible $K$-analytic space, and let $\sU \subset \sZ$ be a dense open with $\codim(\sZ\setminus \sU) \geq 2$. 
Let $G/K$ be a semi-abelian variety. 
Then any analytic morphism $\sU \to G^{\an}$ uniquely extends to an analytic morphism $\sZ \to G^{\an}$. 
\end{theorem}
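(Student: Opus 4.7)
The plan is to reduce the theorem to two fundamental cases---$G$ a torus and $G$ an abelian variety---via the Chevalley exact sequence
\[
1 \longrightarrow T \longrightarrow G \xrightarrow{\pi} A \longrightarrow 1,
\]
with $T$ an algebraic torus and $A$ an abelian variety, and then combine them via a torsor argument. Uniqueness of the extension is automatic: since $\sZ$ is reduced (being smooth) and $G^{\an}$ is separated by \autoref{lemma:absoluteproperties}.(3), any two extensions of $\sU \to G^{\an}$ to $\sZ$ agree on the dense open $\sU$ and hence everywhere. For existence, given $f\colon \sU \to G^{\an}$, compose with $\pi^{\an}$ to obtain $\pi^{\an}\circ f\colon \sU \to A^{\an}$; once this is extended to $\tilde g\colon \sZ \to A^{\an}$ via the abelian variety case below, the pullback $\sE \coloneqq \tilde g^{\ast}G^{\an}$ is a $T^{\an}$-torsor over $\sZ$ which acquires a section over $\sU$ from $f$. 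Since $T^{\an}$-torsors are Zariski-locally trivial on a normal base, extending this section to all of $\sZ$ amounts locally to extending a morphism from a big analytic open into $T^{\an}$, reducing the problem to the torus case.

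For the torus case, the decomposition $T \simeq \mG_{m,K}^{r}$ reduces us to $T = \mG_{m,K}$. A morphism $\sU \to \mG_{m,K}^{\an}$ is precisely a unit $h \in \sO_{\sU}(\sU)^{\times}$. A non-Archimedean Hartogs-type extension theorem for smooth adic spaces with complement of codimension at least two (obtained, for instance, by transferring the rigid analytic version via \autoref{thm:comparision}) extends $h$ to $\tilde h \in \sO_{\sZ}(\sZ)$, and likewise $h^{-1}$ extends to some $\tilde h' \in \sO_{\sZ}(\sZ)$. The relation $\tilde h \cdot \tilde h' = 1$ holds on the dense open $\sU$, and therefore on all of $\sZ$ since $\sZ$ is reduced, so $\tilde h$ is a global unit and defines the desired extension.

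The abelian variety case is the main obstacle, and its resolution amounts to a non-Archimedean analogue of Weil's extension theorem. Given $f\colon \sU \to A^{\an}$, choose a closed embedding of $A$ into a projective space; properness of the target together with a Hartogs-type extension applied coordinate-wise to meromorphic functions on $\sZ$ yields a meromorphic extension $\tilde f\colon \sZ \dashrightarrow A^{\an}$. By \autoref{prop:meromorphicmapadic}, the indeterminacy locus $\sI \subset \sZ$ is an analytic subset of codimension at least two. The remaining step is to show $\sI = \emptyset$. I would argue by contradiction: a point $z_{0} \in \sI$ would, after a suitable resolution of the meromorphic map, produce a non-constant morphism from a positive-dimensional projective $K$-analytic exceptional fiber into $A^{\an}$; but any morphism $\mP^{1,\an} \to A^{\an}$ is constant (by rigid analytic GAGA \cite{kopfGAGA} combined with the classical fact that morphisms $\mP^{1} \to A$ are constant, or equivalently via the results of Cherry \cite{Cherry} ruling out non-constant analytic maps from rational curves to abelian varieties), and this rigidity forces the putative exceptional image to collapse, contradicting the existence of $z_{0}$. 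Combining this with the torus case and the torsor reduction from the first paragraph completes the proof. The technical heart of the argument is ensuring that the blow-up/resolution step and the rational-curve rigidity work cleanly in the adic setting; once that is in place, the rest is formal.
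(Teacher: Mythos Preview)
Your overall architecture---Chevalley decomposition, handle the torus and abelian variety cases separately, then recombine---matches the paper's, and your torus case via Hartogs extension of units is exactly the paper's \autoref{lemma:extendtorus} (second Hebbarkeitssatz). Your recombination via local triviality of $T^{\an}$-torsors is a clean and valid alternative to the faithfully flat descent the paper invokes from \cite{conrad2006relative}; if anything it is more direct, since $T$ is split and the torsor is just a tuple of line bundles.

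The genuine gap is in the abelian variety step. You resolve the meromorphic map $\sZ \dashrightarrow A^{\an}$ and claim the exceptional fibre over $z_{0}$ yields a non-constant $\mP^{1,\an}\to A^{\an}$. But blowing up the indeterminacy ideal of a map out of a smooth space does \emph{not} in general produce fibres containing rational curves: the fibre over $z_{0}$ is $\mathrm{Proj}$ of the associated graded of that ideal, which is merely an arbitrary closed subscheme of some $\mP^{N}_{\kappa(z_{0})}$ and need not contain a line. To force a rational curve into the exceptional locus you would need either embedded resolution of the indeterminacy locus in the adic category followed by a careful bookkeeping of exceptional divisors through a tower of smooth-centre blow-ups, or a non-Archimedean analytic analogue of the Hacon--McKernan theorem that fibres of proper birational morphisms between smooth varieties are rationally chain connected. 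The first is heavy and the second is not available. The paper sidesteps all of this: its \autoref{lemma:extendabelianvariety} uses the double duality $A\cong A^{**}$, so that a morphism $\sU\to A^{\an}$ is the same datum as a line bundle on $A^{*\an}\times_{\sZ}\sU$, and then appeals to the bijection $\Pic(\sU')\cong\Pic(\sZ')$ for smooth irreducible $\sZ'$ with big open $\sU'$ (\autoref{lemma:Picard}, proved via the non-Archimedean Remmert--Stein theorem and the Weil--Cartier--line-bundle identifications on smooth irreducible adic spaces). No resolution, no rational curves.
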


The algebraic variant of \autoref{thm:extendanalyticmorphism} is well-known (see e.g., \cite[Theorem 4.4.1]{BLR} and \cite[Lemma A.2]{MochizukiAbsoluteAnabelian}).
To prove our results, we begin by studying the case when $G$ is proper (i.e., when $G$ is an abelian variety). We first show that the analytic Picard group of $\sU$ is in bijection to the analytic Picard group of $\sZ$ (\autoref{lemma:Picard}), and then the result follows using similar reasoning as \cite[Corollary 8.4.6]{BLR}. 
Next, we prove the claim in the setting where $G$ is a (split) torus using the non-Archimedean variant of the second Hebbarkeitssatz. 
Finally, we conclude the proof using these two pieces and a descent argument.

To start, we prove our result on analytic Picard groups. 

\begin{lemma}\label{lemma:Picard}
Let $\sZ$ be a smooth, irreducible $K$-analytic space, and let $\sU \subset \sZ$ be a dense open with $\codim(\sZ\setminus \sU) \geq 2$. Then, $\Pic(\sU) $ is in bijection with $ \Pic(\sZ)$. 
\end{lemma}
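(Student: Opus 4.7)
The plan is to identify $\Pic$ with Cartier-divisor classes, using the sheaf of meromorphic functions $\sM_{\sZ}$ constructed in Section \ref{sec:nonArchspaces}, and to exploit that on a smooth $K$-analytic space Cartier divisors coincide with Weil divisors. Since Weil divisors are supported in pure codimension $1$, they are insensitive to the codimension $\geq 2$ locus $\sZ\setminus \sU$. This reduces the whole lemma to two Hartogs-type extension statements on the smooth adic space $\sZ$.

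For injectivity of the restriction map $\Pic(\sZ)\to \Pic(\sU)$, I would take $\sL\in \Pic(\sZ)$ with $\sL|_{\sU}\cong \sO_{\sU}$ and exhibit a global trivialization. Such an isomorphism furnishes a nowhere-vanishing section $s\in \Gamma(\sU,\sL)$; by the non-Archimedean Hebbarkeitssatz for coherent sheaves on smooth spaces (the adic counterpart of the classical extension of sections across codimension $\geq 2$ used in \cite[\S 8.4]{BLR}, transported through \autoref{thm:comparisionrigidadic} and \autoref{thm:comparision}), $s$ extends to a section $\tilde s\in \Gamma(\sZ,\sL)$. The vanishing locus $V(\tilde s)$ is analytic, and since $\sL$ is invertible and $\sZ$ smooth, $V(\tilde s)$ is either empty or of pure codimension $1$. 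As $V(\tilde s)\cap \sU=\emptyset$ it must lie in $\sZ\setminus \sU$, which has codimension $\geq 2$, so $V(\tilde s)=\emptyset$ and $\tilde s$ trivializes $\sL$.

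For surjectivity, I would start with $\sN\in \Pic(\sU)$ and realize it as a Cartier-divisor class on the smooth $\sU$, represented by a Weil divisor $D\subset \sU$. The Zariski-analytic closure $\overline D$ of $D$ in $\sZ$ remains of pure codimension $1$ (no codimension $1$ component of $\overline D$ can be contained in $\sZ\setminus \sU$ because the latter has codimension $\geq 2$). Using the Cartier $=$ Weil identification on the smooth adic space $\sZ$, $\overline D$ produces a line bundle $\widetilde{\sN}\in \Pic(\sZ)$ whose restriction to $\sU$ is isomorphic to $\sN$.

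The technically delicate step, and the main obstacle, will be to carefully set up in the taut, locally strongly Noetherian adic category the two ingredients used above: (i) the Cartier $=$ Weil divisor identification on smooth $K$-analytic spaces, and (ii) the Hebbarkeitssatz-type extension of sections of invertible sheaves and of Weil divisors across a closed analytic subset of codimension $\geq 2$ in a smooth space. Both are well documented for rigid analytic spaces (e.g., in \cite[\S 8.4]{BLR} and Lütkebohmert's extension theorems), and one can transfer them to our setting via the equivalences of \autoref{thm:comparisionrigidadic} and \autoref{thm:comparision} together with the local properties recorded in Subsection \ref{subsec:propertiesanalytification}; once these are in place, the two steps above assemble into a proof that $\Pic(\sZ)\to \Pic(\sU)$ is a bijection.
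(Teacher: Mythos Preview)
Your proposal is correct and follows essentially the same route as the paper: reduce $\Pic$ to Cartier divisor classes (using irreducibility), identify Cartier with Weil divisors on a smooth $K$-analytic space, and then observe that Weil divisors are unaffected by removing a codimension $\geq 2$ locus. The paper packages the last step via the non-Archimedean Remmert--Stein theorem of L\"utkebohmert \cite{Lutkebohmert:RemmertStein} (in lieu of your separate closure-of-divisors and Hebbarkeitssatz arguments) and cites \cite[Theorem 8.9]{mitsui2011bimeromorphic} for the Cartier $=$ Weil identification; it also emphasizes, as you should, that irreducibility of the adic space is what makes the Cartier-class-to-$\Pic$ step go through (this is precisely why the argument is run in the adic rather than Berkovich category).
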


\begin{proof}
By the non-Archimedean version of the Remmert--Stein theorem \cite{Lutkebohmert:RemmertStein}, we have that the group of Weil divisors on $\sU$ is isomorphic to the group of Weil divisors on $\sZ$. 
Since $\sZ$ (resp.~$\sU$) is smooth, the group of Weil divisors on $\sZ$ (resp.~$\sU$) isomorphic to the group of Cartier divisors on $\sZ$ (resp.~$\sU$) by \cite[Theorem 8.9]{mitsui2011bimeromorphic}. 
Now since $\sZ$ (resp.~$\sU$) is irreducible, the classical argument (see e.g., \cite[Chapter II, Proposition 6.15]{Har77}) tells us that Cartier divisors on $\sZ$ (resp.~$\sU$) are in bijection with line bundles on $\sZ$ (resp.~$\sU$). 
Therefore, we have that line bundles on $\sU$ are in bijection with line bundles on $\sZ$. 
\end{proof}

\begin{remark}
The algebraic variant of \autoref{lemma:Picard} is well-known, and the result is actually not true for complex analytic manifolds (see \cite[Section 2.3]{huybrechts2005complex} for a discussion). Indeed, in the complex analytic setting, it is not true that Cartier divisors on a smooth complex manifold $X(\mC)$ are in bijection with line bundles on $X(\mC)$; instead, they are in bijection with line bundles $\sL$ on $X(\mC)$ such that $H^0(X(\mC),\sL)\neq 0$. 
Moreover, there are many examples of line bundles that do not come from Cartier divisors on complex manifolds and line bundles on big, dense opens which do not extend to the entire space (see e.g., \cite[Remark 2.3.21]{huybrechts2005complex}).

The issue in the complex analytic setting is that when $X/\mC$ is separated, $X(\mC)$ is Hausdorff, and hence if $X$ has positive dimension, it cannot also be irreducible in the complex analytic topology. 
The lack of irreducibility prevents the scheme-theoretic argument from going through as we need to know that if the restriction of a sheaf to each open covering is constant, then sheaf is constant. 
Therefore, it is essential that we work with adic spaces and not Berkovich spaces (c.f.~\autoref{remark:nonHausdorff}). 
\end{remark}

\begin{lemma}\label{lemma:extendabelianvariety}
Let $\sZ$ be a smooth, irreducible $K$-analytic space, and let $\sU \subset \sZ$ be a dense open with $\codim(\sZ\setminus \sU) \geq 2$. Let $A/K$ be an abelian variety.  
Then any analytic morphism $\varphi\colon \sU \to A^{\an}$ uniquely extends to an analytic morphism $\wt{\varphi}\colon \sZ \to A^{\an}$. 
\end{lemma}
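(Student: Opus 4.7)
The plan is to adapt the strategy of \cite[Section 8.4, Corollary 6]{BLR} to the non-Archimedean analytic setting, taking \autoref{lemma:Picard} as the key input that replaces the algebraic fact that Cartier divisors extend across codimension-two loci on a regular scheme. Uniqueness of the extension $\wt\varphi$ is immediate: any two extensions would agree on the dense subspace $\sU$ and take values in the separated $K$-analytic space $A^{\an}$, hence coincide everywhere. I therefore focus on existence.

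For existence, I would first replace the target with a projective space. Since $A$ is projective over $K$, choose an ample line bundle $\sL$ on $A$ and $n \gg 0$ such that global sections $s_0,\dots,s_N$ of $\sL^{\otimes n}$ cut out a closed immersion $A \hookrightarrow \mP^N_K$; analytifying produces a closed immersion $A^{\an} \hookrightarrow \mP^{N,\an}$. Pulling back along $\varphi$ gives a line bundle $\varphi^{\ast}\sL^{\otimes n}$ on $\sU$ together with sections $\varphi^{\ast}s_i$ having no common zero. By \autoref{lemma:Picard}, the line bundle $\varphi^{\ast}\sL^{\otimes n}$ extends uniquely to a line bundle $\sM$ on $\sZ$. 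Since $\sZ$ is smooth and $\sZ \setminus \sU$ has codimension at least two, the non-Archimedean Hebbarkeitssatz for smooth spaces (applied in local trivializations of $\sM$) implies that each section $\varphi^{\ast}s_i$ extends uniquely to a global section $\wt{s_i}$ of $\sM$ on $\sZ$.

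The main obstacle is to rule out common zeros of the $\wt{s_i}$ on $\sZ \setminus \sU$: once this is done, the $\wt{s_i}$ define an analytic morphism $\wt\Phi\colon \sZ \to \mP^{N,\an}$ extending the composition $\sU \to A^{\an} \hookrightarrow \mP^{N,\an}$, and since $A^{\an}$ is closed in $\mP^{N,\an}$ and $\wt\Phi(\sU) \subset A^{\an}$ with $\sU$ dense in $\sZ$, we obtain $\wt\Phi(\sZ) \subset A^{\an}$, giving the desired $\wt\varphi$. To verify base-point freeness on the boundary, I would pass to the graph: form $\Gamma_\varphi \subset \sU \times A^{\an}$, take its analytic closure $\overline{\Gamma} \subset \sZ \times A^{\an}$, and consider the projection $p\colon \overline{\Gamma} \to \sZ$. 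This projection is proper since $A^{\an}$ is, and it is an isomorphism over $\sU$; one must show that each fiber over $\sZ \setminus \sU$ consists of a single point.

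The hardest step is exactly this fiberwise rigidity. I would handle it in two stages: first apply \autoref{prop:meromorphicmapadic} to the meromorphic map $\sZ \dashrightarrow A^{\an}$ encoded by $\overline{\Gamma}$ to see that the indeterminacy locus has codimension at least two in $\sZ$; then argue that a positive-dimensional fiber would force $A^{\an}$ to contain a non-constant image of a proper, connected, positive-dimensional $K$-analytic space, which contradicts the fact that $A^{\an}$ contains no rational curves (equivalently, every morphism from $\mP^{1,\an}$ to $A^{\an}$ is constant, by Cherry's non-Archimedean hyperbolicity results for abelian varieties \cite{Cherry}, or directly by algebraization plus rigidity of morphisms from $\mP^1$ to abelian varieties). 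Combined with properness of $p$ and irreducibility of the fibers (which follows from connectedness of $\sZ$ and normality arguments analogous to those in the proof of \autoref{lemma:positivedimfibers}), this forces $p$ to be an isomorphism, completing the extension.
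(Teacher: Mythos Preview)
Your approach diverges from the paper's and contains a genuine gap at the step you yourself flag as hardest. The paper does not embed $A$ in projective space at all; instead it uses biduality $A \cong A^{**}$ together with the representability of the analytic Picard functor. A morphism $\sU \to A^{\an} \cong (A^{**})^{\an}$ corresponds, via the Poincar\'e bundle, to an analytic line bundle on $A^{*\an}\times_{\sZ}\sU$. Since $A^{*\an}\times_{\sZ}\sZ$ is smooth and irreducible and the complement of $A^{*\an}\times_{\sZ}\sU$ has codimension at least two, \autoref{lemma:Picard} extends this line bundle uniquely to $A^{*\an}\times_{\sZ}\sZ$, which in turn yields the extended morphism $\sZ \to (A^{**})^{\an}\cong A^{\an}$. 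No base-point-freeness verification is needed.

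Your projective-embedding argument, by contrast, genuinely requires ruling out common zeros of the extended sections $\wt{s_i}$, and your justification for this fails. First, invoking \autoref{prop:meromorphicmapadic} is vacuous here: it only asserts that the indeterminacy locus has codimension at least two, which is already contained in the hypothesis $\codim(\sZ\setminus\sU)\geq 2$; it does not say the indeterminacy locus is empty. Second, and more seriously, the claim that a positive-dimensional fiber of $p\colon\overline{\Gamma}\to\sZ$ would produce a rational curve in $A^{\an}$ is unsupported. A positive-dimensional, irreducible, proper $K$-analytic subspace of $A^{\an}$ is in no way forced to be (or to contain) a rational curve; abelian varieties are full of such subspaces. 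In the algebraic or complex-analytic setting one can rescue this by resolving $\overline{\Gamma}$, appealing to the fact that exceptional loci of proper birational morphisms between smooth spaces are uniruled, and then using that $A$ contains no rational curves. But you neither invoke resolution of singularities in the non-Archimedean analytic category nor cite any result guaranteeing uniruledness of exceptional fibers there. Without that input, the argument does not close, and the biduality route is both simpler and complete.
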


\begin{proof}
We will closely follow the proof from \cite[Corollary 8.4.6]{BLR}. 
First, we base change $A/K$ to an abelian variety $A_\sZ \coloneqq A\times_K \sZ $. 
Recall that $A \cong A^{**}$ \cite[Theorem 8.4.5]{BLR}, and so by rigid analytic GAGA \cite{kopfGAGA}, $A^{\an}\cong A^{**\an}$, where $A^{*\an} \cong A^{*}$ represents the adic Picard functor of $A^{\an}$ (c.f.~\cite[Section 1]{BLII}). 
Now a morphism $\varphi\colon \sU \to A^{\an}_\sZ$ corresponds to an analytic line bundle on $A^{*\an} \times_{\sZ} \sU$.
Since $\sZ/K$ is smooth, we have that $A^{*\an} \to \sZ$ is smooth, and so \autoref{lemma:Picard} implies that an analytic line bundle on 
$A^{*\an} \times_{\sZ} \sU$ uniquely extends to a line bundle on $A^{*\an}_{\sZ}$, and hence gives rise to an extension $\wt{\varphi}\colon \sZ \to A^{**\an}_{\sZ}$. 
\end{proof}

\begin{lemma}\label{lemma:extendtorus}
Let $\sZ/K$ be a smooth $K$-analytic space, and let $\sU \subset \sZ$ be an open subset with $\codim(\sZ\setminus \sU) \geq 2$. 
For $T/K$ a split torus, any analytic morphism $\sU \to T^{\an}$ uniquely extends to an analytic morphism $\sZ\to T^{\an}$. 
\end{lemma}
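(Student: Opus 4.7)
The plan is to reduce the statement to the case of the multiplicative group $\mathbb{G}_{m,K}$ and then to invoke the non-Archimedean second Hebbarkeitssatz (second Riemann extension theorem). Since $T$ is a split torus, we may write $T \cong \mathbb{G}_{m,K}^n$ for some $n\geq 0$, and as extension of analytic morphisms to a product can be checked componentwise, it suffices to prove the statement when $T = \mathbb{G}_{m,K}$.

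Given an analytic morphism $\varphi\colon \sU \to \mathbb{G}_{m,K}^{\an}$, the datum of $\varphi$ is equivalent to the datum of an invertible analytic function $f \in \sO_\sU(\sU)^\times$, obtained by pulling back the standard coordinate on $\mathbb{G}_{m,K}$. In particular, both $f$ and $f^{-1}$ are elements of $\sO_\sU(\sU)$. The claim will follow once we know that $f$ and $f^{-1}$ extend (uniquely) to elements $\tilde f, g \in \sO_\sZ(\sZ)$: indeed, by continuity and the density of $\sU$ in $\sZ$, the relation $\tilde f \cdot g = 1$ then holds on all of $\sZ$, so $\tilde f \in \sO_\sZ(\sZ)^\times$ and defines the required extension $\wt{\varphi}\colon \sZ \to \mathbb{G}_{m,K}^{\an}$. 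Uniqueness is automatic from the density of $\sU$ in the reduced space $\sZ$.

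The step requiring input beyond general nonsense is thus the extension of $f \in \sO_\sU(\sU)$ to $\tilde f \in \sO_\sZ(\sZ)$. This is precisely the content of the non-Archimedean second Riemann extension theorem: if $\sZ$ is a smooth (more generally, normal) $K$-analytic space and $\sU \subset \sZ$ is the complement of an analytic subset of codimension at least $2$, then the restriction map $\sO_\sZ(\sZ) \to \sO_\sU(\sU)$ is an isomorphism. In the setting of rigid analytic spaces this is due to Bartenwerfer and L\"utkebohmert, and the adic statement follows via the equivalence of categories recorded in \autoref{thm:comparisionrigidadic} and \autoref{thm:comparision}, since the complement $\sZ \setminus \sU$, being of codimension at least $2$, corresponds under this equivalence to an analytic subset of the same codimension on the Berkovich/rigid side.

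The only real obstacle is making sure the hypotheses of the Hebbarkeitssatz apply: we need $\sZ \setminus \sU$ to be a (nowhere-dense) analytic subset of codimension at least $2$, which is exactly what is given. Smoothness of $\sZ$ implies normality, and affinoid-locally we can reduce to the classical rigid statement. Applying the extension theorem to both $f$ and $f^{-1}$ and invoking the continuity argument above completes the proof.
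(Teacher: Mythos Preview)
Your proof is correct and follows essentially the same approach as the paper: both reduce to the non-Archimedean second Hebbarkeitssatz (L\"utkebohmert) as the key input. The paper's proof is terser and phrases the reduction in terms of extending a generating section of a line bundle on $\sZ$ from $\sU$ to all of $\sZ$, whereas you unwind this more concretely by splitting $T\cong\mathbb{G}_{m,K}^n$ and extending the invertible function $f$ and its inverse separately; these are the same argument, since a generating section of the trivial line bundle is precisely an invertible function.
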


\begin{proof}
Note that it suffices to show that if $\sL$ is any line bundle on $\sZ$ that admits a generating section $s_{\sU} \in H^0(\sU,\sL)$, then $s_{\sU}$ extends to a generating section of $\sL$ over $\sZ$. 
This follows from the non-Archimedean variant of the second Hebbarkeitssatz \cite{Lutkebohmert:RemmertStein}. 
\end{proof}

\begin{proof}[Proof of \autoref{thm:extendanalyticmorphism}]
Let $G/K$ denote a semi-abelian variety and suppose we have the following presentation
\[
0 \to T \to G \to A \to 0
\]
where $T$ is a split torus over $K$ and $A$ is an abelian variety over $K$. 
First we note that the quotient map $G\to A$ is faithfully flat with smooth fibers and hence smooth by \cite[Tag 01V8]{stacks-project}. 
In fact, we have that $G\times_A G \cong T \times G$ via $(g,h) \to (g-h,g)$ and similarly $G\times_A G \times_A G \cong T\times T\times G$. 
We note that since analytification commutes with fiber products, we have that 
$G^{\an} \times_{A^{\an}} G^{\an} \cong T^{\an}\times G^{\an}$ and 
$G^{\an}\times_{A^{\an}} G^{\an} \times_{A^{\an}} G^{\an} \cong T^{\an}\times T^{\an}\times G^{\an}$.

Let $\sZ$ be a smooth, irreducible $K$-analytic space, and let $\sU \subset \sZ$ be a dense open with $\codim(\sZ\setminus \sU) \geq 2$. 
By \autoref{lemma:extendabelianvariety}, we can uniquely extend the composition $\sU\to G^{\an} \to A^{\an}$ to $\sZ \to A^{\an}$, which gives use the commutative diagram
\[
\begin{tikzcd}
\sU \arrow[right hook->]{r} \arrow{d} & \sZ \arrow{d} \\
G^{\an} \arrow{r} & A^{\an}.
\end{tikzcd}
\]
Pulling back along the map $G^{\an} \to A^{\an}$, we have the commutative diagram
\[
\begin{tikzcd}
\sU \times_{A^{\an}} G^{\an} \arrow[right hook->]{r} \arrow{d} & \sZ \times_{A^{\an}} G^{\an} \arrow{d} \\
G^{\an} \times_{A^{\an}} G^{\an} \arrow{r} & G^{\an}.
\end{tikzcd}
\]
Recalling that $G^{\an} \times_{A^{\an}} G^{\an} \cong T^{\an}\times G^{\an}$, we get a map $\sU \times_{A^{\an}} G^{\an} \to T^{\an}\times G^{\an} \to T^{\an}$ via the first projection. By \autoref{lemma:extendtorus}, this uniquely extends to a map $\sZ \times_{A^{\an}} G^{\an} \to T^{\an}$ since $\sU\times_{A^{\an}} G^{\an} \subset \sZ\times_{A^{\an}}G^{\an}$ is an open immersion of smooth $K$-analytic spaces. This gives us the following diagonal arrow in the above diagram
\[
\begin{tikzcd}
\sU \times_{A^{\an}} G^{\an} \arrow[right hook->]{r} \arrow{d} & \sZ \times_{A^{\an}} \arrow{ld} G^{\an} \arrow{d} \\
T^{\an} \times G^{\an} \arrow{r} & G^{\an}.
\end{tikzcd}
\]
Note that the top triangle commutes since it does so after composing with the first and second projections $T^{\an}\times G^{\an} \to T^{\an}$ (by the extension property) and  $T^{\an}\times G^{\an} \to G^{\an}$ (which was given). Moreover, the bottom triangle commutes because everything is a morphism over $G^{\an}$. 

We need to show that the morphism $\sZ \times_{A^{\an}} G^{\an} \to G^{\an} \times_{A^{\an}} G^{\an}$ descends to $\sZ \to G^{\an}$. 
To do this, we use the theory of faithfully flat descent in the category of rigid analytic varieties, which was developed in \cite[Section 4.2]{conrad2006relative}. 
First, we note that the morphism of $K$-analytic spaces $G^{\an} \to A^{\an}$ admits local {fpqc }quasi-sections via \textit{loc.~cit.~}Theorem 4.2.2 as it is the analytification of the faithfully flat morphism of $K$-schemes $G\to A$. Moreover, we may appeal to \textit{loc.~cit.~}Theorem 4.2.3. We note that while these results are for rigid analytic spaces, they carry over for the $K$-analytic spaces we consider due to \autoref{thm:comparisionrigidadic} and \autoref{thm:comparision}. 

In order to utilize \textit{loc.~cit.~}Theorem 4.2.3, we need to check the cocycle condition that the two pullbacks along $(\cdot) \times_{A^{\an}} G^{\an} \times_{A^{\an}} G^{\an} \substack{\rightarrow\\[-1em] \rightarrow} (\cdot) \times_{A^{\an}} G^{\an}$ agree. 
These pullbacks agree above the dense open $\sU \to \sZ$ and $G^{\an}\times_{A^{\an}} G^{\an} \times_{A^{\an}} G^{\an} \cong T^{\an}\times T^{\an}\times G^{\an}$ over $G^{\an}$ (via projection onto the last component), so the uniqueness statement for morphisms to $T^{\an} \times T^{\an}$ shows that they agree everywhere. Therefore, we have that $\sZ \times_{A^{\an}} G^{\an} \to G^{\an} \times_{A^{\an}} G^{\an}$ descends to $\sZ \to G^{\an}$, and by construction, the restriction to $\sU$ is the initial morphism $\sU \to G^{\an}$. 
\end{proof}

We now use \autoref{thm:extendanalyticmorphism} to deduce that an analytic map from a big open subset of an abelian variety to a semi-abelian varietiy uniquely extend to an algebraic morphism between the abelian variety and the semi-abelian variety. 

\begin{prop}\label{prop:extendalgebraic}
Let $A/K$ be an abelian variety, and let $\sU \subset A^{\an}$ be a dense open with $\codim(A^{\an}\setminus \sU) \geq 2$. 
Let $G/K$ be a semi-abelian variety.  
Then any analytic morphism $\varphi\colon \sU \to G^{\an}$ uniquely extends to an algebraic morphism $\wt{\varphi}\colon A\to G$.
\end{prop}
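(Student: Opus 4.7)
The plan is to proceed in two steps: first, extend $\varphi$ analytically to all of $A^{\an}$ using the extension theorem just proved, and then show that the resulting analytic morphism is the analytification of an algebraic morphism $A\to G$.

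For the first step, since $A$ is a smooth, irreducible, proper variety over $K$, \autoref{lemma:absoluteproperties} implies that $A^{\an}$ is a smooth, irreducible $K$-analytic space. Combined with the hypothesis $\codim(A^{\an}\setminus \sU)\geq 2$, \autoref{thm:extendanalyticmorphism} provides a unique analytic extension $\hat{\varphi}\colon A^{\an}\to G^{\an}$ of $\varphi$. It therefore remains to show that $\hat{\varphi}$ is the analytification of an (automatically unique) algebraic morphism $\wt{\varphi}\colon A\to G$.

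For the algebraization, fix a presentation $0\to T\to G\to B\to 0$ with $T$ a split torus and $B$ an abelian variety. The composition $A^{\an}\to G^{\an}\to B^{\an}$ is an analytic morphism between analytifications of proper $K$-varieties, so rigid analytic GAGA \cite{kopfGAGA} produces a unique algebraic morphism $f\colon A\to B$ whose analytification equals this composition. Form the pullback $P := A\times_B G$, which is a $T$-torsor over $A$; since analytification commutes with fibre products, $\hat{\varphi}$ induces a section $s\colon A^{\an}\to P^{\an}$ of the associated $T^{\an}$-torsor $P^{\an}\to A^{\an}$, and it suffices to algebraize this section.

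The main step, and the one I expect to carry the load, is to show that $P$ is algebraically trivial. Writing $T\cong \Gm^n$, the $T$-torsors on $A$ are classified by $\Pic(A)^n$ while the $T^{\an}$-torsors on $A^{\an}$ are classified by $\Pic(A^{\an})^n$. The existence of $s$ implies $P^{\an}$ is analytically trivial, and rigid analytic GAGA for the proper variety $A$ furnishes an isomorphism $\Pic(A)\cong \Pic(A^{\an})$, so $P$ is algebraically trivial. Fix an algebraic trivialization $P\cong T\times A$ and view $s$ as an analytic morphism $A^{\an}\to T^{\an}$. Its $n$ coordinates are non-vanishing global sections of $\mathcal{O}_{A^{\an}}$, but $\Gamma(A^{\an},\mathcal{O}_{A^{\an}})=\Gamma(A,\mathcal{O}_A)=K$ by GAGA and properness, so each coordinate is a non-zero constant in $K^{\times}$. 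Hence $s$ is the analytification of an algebraic section $\wt{s}\colon A\to P$, and the composition $A\xrightarrow{\wt{s}} P\to G$ yields the required algebraic extension $\wt{\varphi}$. Uniqueness is immediate from the faithfulness of the analytification functor on morphisms between reduced schemes of finite type over $K$.
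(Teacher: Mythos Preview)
Your proof is correct. The first step matches the paper exactly: extend analytically to $A^{\an}$ via \autoref{thm:extendanalyticmorphism}. For the second step, however, the paper simply invokes \cite[Lemma 2.15]{JVez}, which says directly that any analytic morphism from the analytification of a proper variety to $G^{\an}$ is algebraic. You instead give a self-contained argument: algebraize the composite $A^{\an}\to B^{\an}$ by GAGA, pull back the $T$-torsor $G\to B$ to a $T$-torsor $P\to A$, use the analytic section plus GAGA for $\Pic$ to trivialize $P$ algebraically, and then conclude because $\Gamma(A^{\an},\mathcal O_{A^{\an}})=K$ forces any analytic map $A^{\an}\to T^{\an}$ to be constant. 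This is essentially a proof of the cited lemma in the special case where the source is an abelian variety; it buys independence from the external reference at the cost of a few extra lines, and it makes transparent exactly where properness of $A$ enters (twice: once for $\Pic$, once for global functions).
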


\begin{proof}
By \autoref{thm:extendanalyticmorphism}, we have that the analytic morphism $\varphi\colon \sU \to G^{\an}$ extends to an analytic map $\wt{\varphi}\colon A^{\an} \to G^{\an}$. 
The result now follows from \cite[Lemma 2.15]{JVez}. 
\end{proof}

To conclude this subsection, we show that the results from \cite{MorrowNonArchGGLV} remain valid with our new definition of $K$-analytically Brody hyperbolic modulo $\sD$ (\autoref{defn:KBrodyMod}).

\begin{prop}\label{prop:Brodyequivalence}
Let $X/K$ be a quasi-projective variety which is a closed subvariety of a semi-abelian variety, and let $\Delta \subset X$ be a closed subset. 
Then, $X^{\an}$ is $K$-analytically Brody hyperbolic modulo $\Delta^{\an}$ if and only if every analytic morphism $\mG_{m,K}^{\an} \to X^{\an}$ factors over $\Delta^{\an}$ and for every abelian variety $A/K$ and every dense open subset $U\subset A$ with $\codim(A\setminus U)\geq 2$, every morphism $U\to X$ factors over $\Delta$. 
\end{prop}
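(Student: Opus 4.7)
My plan is to establish the two directions of the equivalence separately, with the forward direction being essentially a tautology and the converse relying on the extension-and-algebraization result \autoref{prop:extendalgebraic}.

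For the forward direction, assume $X^{\an}$ is $K$-analytically Brody hyperbolic modulo $\Delta^{\an}$ in the sense of \autoref{defn:KBrodyMod}. The condition on analytic morphisms $\mG_{m,K}^{\an} \to X^{\an}$ transfers verbatim. Now let $A/K$ be an abelian variety, let $U \subset A$ be a dense open subset with $\codim(A \setminus U) \geq 2$, and let $U \to X$ be an algebraic morphism. Analytifying and using that codimension is preserved under analytification (\autoref{lemma:absoluteproperties} combined with \cite[1.8.8, 1.8.11.(i)]{huber}), we obtain an analytic morphism $U^{\an} \to X^{\an}$ from a dense open $U^{\an} \subset A^{\an}$ with $\codim(A^{\an}\setminus U^{\an}) \geq 2$. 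By hypothesis this factors over $\Delta^{\an}$, hence by GAGA over $\Delta$.

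For the converse, suppose the condition on $\mG_m$ and on big algebraic opens of abelian varieties holds. The $\mG_{m,K}^{\an}$ clause of \autoref{defn:KBrodyMod} is immediate, so it suffices to treat an analytic morphism $\varphi \colon \sU \to X^{\an}$ from a dense open $\sU \subset A^{\an}$ with $\codim(A^{\an} \setminus \sU) \geq 2$, where $A$ is an abelian variety. Since $X$ is a closed subvariety of a semi-abelian variety $G$, we may view $\varphi$ as an analytic morphism $\sU \to G^{\an}$. By \autoref{prop:extendalgebraic}, this extends uniquely to an algebraic morphism $\wt{\varphi} \colon A \to G$.

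The heart of the argument is to upgrade this to an algebraic morphism into $X$. The scheme-theoretic preimage $\wt{\varphi}^{-1}(X) \subset A$ is Zariski closed, and its analytification agrees with $(\wt{\varphi}^{\an})^{-1}(X^{\an})$, which is a closed analytic subspace of $A^{\an}$ containing $\sU$ by construction. Because $\sU$ is dense in the irreducible space $A^{\an}$, this closed analytic subspace must equal $A^{\an}$, and so $\wt{\varphi}^{-1}(X) = A$, i.e., $\wt{\varphi}$ defines an algebraic morphism $A \to X$. Applying the hypothesis to the trivial big open $U = A$ (whose complement is empty), we conclude that $\wt{\varphi}$ factors through $\Delta$, and therefore $\varphi$ factors through $\Delta^{\an}$. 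The only real obstacle in the plan is verifying that $\wt{\varphi}$ lands in $X$ after extension, which is precisely where the density of $\sU$ in $A^{\an}$ and the closedness of $X$ in $G$ combine cleanly.
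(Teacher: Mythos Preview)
Your proof is correct and follows essentially the same route as the paper: the forward direction is immediate, and the converse hinges on \autoref{prop:extendalgebraic} to extend an analytic morphism $\sU \to X^{\an}$ to an algebraic morphism $A \to G$, then applying the hypothesis with $U = A$. You are in fact slightly more careful than the paper at one point: the paper asserts directly that \autoref{prop:extendalgebraic} yields an algebraic morphism $A \to X$, whereas that result only gives $A \to G$; your density argument showing $\wt{\varphi}^{-1}(X) = A$ fills this gap cleanly. The paper additionally cites \autoref{thm:equivalentBrody} and \cite[Lemma A.2]{MochizukiAbsoluteAnabelian}, but neither is strictly needed for the argument as you present it.
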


\begin{proof}
It suffices to show the second implication, namely that if every analytic morphism $\mG_{m,K}^{\an} \to X^{\an}$ factors over $\Delta^{\an}$ and for every abelian variety $A/K$ and every dense open subset $U\subset A$ with $\codim(A\setminus U)\geq 2$, every morphism $U\to X$ factors over $\Delta$, then $X^{\an}$ is $K$-analytically Brody hyperbolic modulo $\Delta^{\an}$. 
By \autoref{thm:equivalentBrody}, it suffices to show that for every abelian variety $A/K$ and every dense open subset $\sU\subset A^{\an}$ with $\codim(A^{\an}\setminus \sU)\geq 2$, every morphism $\sU\to X^{\an}$ factors over $\Delta^{\an}$. 
\autoref{prop:extendalgebraic} tells us that the morphism $\sU\to X^{\an}$ uniquely extends to an algebraic morphism of $K$-schemes $A\to X$. 
The desired result follows because \cite[Lemma A.2]{MochizukiAbsoluteAnabelian} asserts that the morphism $U\to X$ uniquely extends to a morphism $A\to X$, and the image of the map $A\to X$ must factor through $\Delta$. 
\end{proof}

\begin{remark}\label{rem:Brodymoduloempty}
We can use \autoref{thm:extendanalyticmorphism} and \autoref{prop:Brodyequivalence} to deduce that for $X/K$ a quasi-projective variety which is a closed subvariety of a semi-abelian variety, $X^{\an}$ is $K$-analytically Brody hyperbolic modulo $\emptyset$ if and only if $X^{\an}$ is $K$-analytically Brody hyperbolic, in the sense of \cite[Definition 2.3]{JVez}. Indeed, this follows immediately from the argument from \cite[Remark 2.6]{MorrowNonArchGGLV} and the previously mentioned results. 
\end{remark}

%
%

\subsection{Proof of \autoref{thm:closedabelianspecial}}
We now return to the proof of \autoref{thm:closedabelianspecial} and begin by recalling the construction of the Ueno fibration for closed subvarieties of semi-abelian varieties. 

\begin{definition}\label{defn:stablizier}
Let $X$ be a closed subvariety of a semi-abelian variety $G$. 
\begin{enumerate}
\item We define the \cdef{stabilizer of $X$ in $G$} as the maximal closed subgroup $\Stab(X,G)$ of $G$ such that $\Stab(X,G) + X = X$. 
\item We denote the identity component of the closed subgroup $\Stab(X,G)$ by $B(X,G)$. 
\end{enumerate}
\end{definition}

\begin{lemma}\label{lemmastabilizersemiabelian}
Let $X$ be a closed subvariety of a semi-abelian variety $G$. 
The closed subgroup $B(X,G)$ is a semi-abelian subvariety of $G$.
\end{lemma}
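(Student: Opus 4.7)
The plan is to reduce the statement to the standard structural fact that any connected closed subgroup of a semi-abelian variety is itself semi-abelian. First, I would observe that $\Stab(X,G)$ is a closed subgroup scheme of $G$, since it represents the functor sending a $K$-scheme $S$ to the set of $g \in G(S)$ with $g + X_S = X_S$; this is the usual scheme-theoretic stabilizer construction for the action of $G$ on the Hilbert scheme point corresponding to $X$. Its identity component $B(X,G)$ is then a connected closed subgroup of $G$, commutative because $G$ is.

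The content reduces to: if $H$ is a connected closed subgroup of a semi-abelian variety $G$, then $H$ is semi-abelian. To see this, fix a presentation $0 \to T \to G \xrightarrow{\pi} A \to 0$ with $T$ a torus and $A$ an abelian variety. The image $\pi(H)$ is a connected closed subgroup of $A$, hence an abelian subvariety $A' \subseteq A$. The kernel $K \coloneqq H \cap T$ is a closed subgroup of $T$, and its identity component $T' \coloneqq K^{0}$ is a subtorus, as every connected closed subgroup of a torus is itself a torus. Then $H/T'$ is a connected commutative algebraic group fitting into
\[
0 \longrightarrow K/T' \longrightarrow H/T' \longrightarrow A' \longrightarrow 0,
\]
with $K/T'$ finite, so $H/T'$ is a connected finite cover of the abelian variety $A'$, hence itself an abelian variety. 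The resulting short exact sequence
\[
0 \longrightarrow T' \longrightarrow H \longrightarrow H/T' \longrightarrow 0
\]
exhibits $H$ as an extension of an abelian variety by a torus, i.e.~as a semi-abelian variety. Applying this with $H = B(X,G)$ yields the lemma.

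The argument involves no substantively hard step; the only thing to be careful about is that the kernel $K$ need not itself be connected, which is why one must pass to $T' = K^{0}$ and then verify that the quotient $H/T'$ is still an abelian variety rather than merely isogenous to one. This is the one bookkeeping point where the proof could go astray, but it is handled cleanly by the observation that a connected commutative group which is a finite cover of an abelian variety is an abelian variety.
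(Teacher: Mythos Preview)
Your proof is correct and reaches the same conclusion as the paper, but by a different route. The paper's argument is much shorter: after noting that $B(X,G)$ is by definition connected and is smooth by Cartier's theorem (cited as \cite[\href{https://stacks.math.columbia.edu/tag/047N}{Tag 047N}]{stacks-project}), it simply invokes \cite[Corollary 5.4.6(1)]{brion2017some}, which states directly that a connected smooth closed subgroup of a semi-abelian variety is semi-abelian. You instead prove this structural fact by hand, via the Chevalley presentation $0 \to T \to G \to A \to 0$ and the bookkeeping with $T' = (H\cap T)^0$. Your argument is more self-contained and makes the mechanism transparent; the paper's is terser and defers to the literature. One point worth making explicit in your write-up is the use of Cartier's theorem (characteristic zero) to know that $H$ and its quotients are smooth group varieties, so that, e.g., the connected proper group $H/T'$ is genuinely an abelian variety; the paper states this smoothness step explicitly, and you rely on it implicitly.
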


\begin{proof}
By definition, $B(X,G)$ is connected, and since $B(X,G)$ is a closed subgroup of $G$, we have that $B(X,G)$ is an algebraic group. Moreover, we have that $B(X,G)$ is a connected and smooth subgroup of $G$ (see \cite[\href{https://stacks.math.columbia.edu/tag/047N}{Tag 047N}]{stacks-project} for the statement on smoothness). 
The result now follows from \cite[Corollary 5.4.6.(1)]{brion2017some}. 
\end{proof}

\begin{definition}[\protect{\cite[Definition 1.2]{Vojta:IntegralPointsII}}]\label{defn:Ueno}
Let $X$ be a closed subvariety of a semi-abelian variety $G$. Consider the quotient $G/B(X,G)$, which is a semi-abelian variety by \cite[Corollary 5.4.6.(1)]{brion2017some}. The restriction to $X$ of the quotient map $G\to G/B(X,G)$ exhibits $X$ as a fiber bundle with fiber $B(X,G)$. This map $X \to X/B(X,G)$ is called the \cdef{Ueno fibration} of $X$.

To summarize, we have the following diagram
\[
\begin{tikzcd}
G \arrow[two heads]{r} & G/B(X,G) \\
X \arrow[right hook->]{u} \arrow[two heads]{r} & X/B(X,G) \arrow[right hook->]{u}
\end{tikzcd}
\]
where $X/B(X,G) \subset G/B(X,G)$ is a closed subvariety. 
\end{definition}

By the following result, the Ueno fibration of $X$ allows us to identify closed subvarieties of a semi-abelian variety which are of logarithmic general type. 

\begin{theorem}[\protect{\cite[Theorem 5.16]{Vojta:IntegralPointsII}}]\label{thm:KawamataUenofibration}
Let $X$ be a closed subvariety of a semi-abelian variety $G/K$, and let $D$ be an effective Weil divisor on $X$. Then, $B(X\setminus D, G) = 0$ if and only if $X\setminus D$ is of logarithmic general type. 
\end{theorem}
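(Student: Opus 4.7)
The plan is to prove both implications organized around the Ueno fibration (\autoref{defn:Ueno}), using subadditivity of the logarithmic Kodaira dimension for the forward direction and a logarithmic Gauss map argument for the reverse direction, ultimately reducing via the extension $0 \to T \to G \to A \to 0$ to the Kawamata--Ueno structure theorem for closed subvarieties of abelian varieties.

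For the forward direction, I would argue the contrapositive. Suppose $B := B(X \setminus D, G)$ is positive dimensional; by \autoref{lemmastabilizersemiabelian}, $B$ is a semi-abelian subvariety of $G$, and by definition of the stabilizer, translation by $B$ preserves $X \setminus D$. The composition $X\setminus D \hookrightarrow G \twoheadrightarrow G/B$ then realizes $X\setminus D$ as a principal $B$-bundle over a subvariety $Y \subset G/B$. I would pick compatible log-smooth compactifications $(\overline{X}, \Delta)$ of $X\setminus D$ and $(\overline{Y}, \Delta_Y)$ of $Y$ so that the quotient extends to a morphism $f\colon \overline{X} \to \overline{Y}$ whose generic fiber is a smooth compactification of $B$, hence has logarithmic Kodaira dimension zero. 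The logarithmic $C_{n,m}$-type inequality for semi-abelian fibers (Kawamata) then yields that the logarithmic Kodaira dimension of $X \setminus D$ is at most $\dim Y < \dim X$, contradicting the logarithmic general type assumption.

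For the reverse direction, I would assume $B(X\setminus D, G) = 0$ and deduce bigness of $K_{\overline{X}} + \Delta$ for a suitable log-smooth compactification. Choose a smooth equivariant compactification $\overline{G}$ of $G$ with normal crossings boundary $\partial \overline{G}$; take $\overline{X}$ to be the Zariski closure of $X$ in $\overline{G}$ and $\Delta \subset \overline{X}$ to be the (further resolved) preimage of $D \cup \partial \overline{G}$. The sheaf $\Omega^1_{\overline{G}}(\log \partial \overline{G})$ is globally free of rank $\dim G$, and restriction to $\overline{X}$ produces a morphism $\mathscr{O}_{\overline{X}}^{\dim G} \to \Omega^1_{\overline{X}}(\log \Delta)$. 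Taking top exterior powers defines a logarithmic Gauss map $\overline{X} \dashrightarrow \mathrm{Gr}(\dim X, \dim G)$ whose associated line bundle is $K_{\overline{X}} + \Delta$. The hypothesis $B(X\setminus D, G) = 0$ translates into the statement that this Gauss map is generically finite, which forces $K_{\overline{X}} + \Delta$ to be big.

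The main obstacle will be making the Gauss map argument in the reverse direction rigorous when $G$ is only semi-abelian rather than abelian. In the abelian case one can directly invoke Ueno's theorem, but the toric directions require separate care: the logarithmic differentials $dt_i/t_i$ coming from $T$ behave differently from the holomorphic ones coming from $A$, and the boundary $\Delta$ now has contributions from $\partial \overline{G}$. I would handle this by factoring through the projection $G \to A$, applying the Kawamata--Ueno theorem to the image of $X$ in $A$ to control the abelian quotient directions, and using the invariant logarithmic forms pulled back from the torus factor, combined with the triviality of $B(X\setminus D, G)$, to ensure that the logarithmic Gauss map remains generically finite along the toric directions as well.
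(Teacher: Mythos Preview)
The paper does not contain a proof of this statement: \autoref{thm:KawamataUenofibration} is quoted verbatim from \cite[Theorem 5.16]{Vojta:IntegralPointsII} and invoked as a black box in the proof of \autoref{thm:closedabelianspecial}. There is therefore nothing in the paper to compare your proposal against.

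That said, your outline is a reasonable sketch of how Vojta's result is actually established. The forward direction via the Ueno fibration and subadditivity of the log Kodaira dimension is the standard argument and goes through essentially as you describe. For the reverse direction, your logarithmic Gauss map strategy is also the correct idea, and you have correctly identified the genuine subtlety: controlling the toric directions in the semi-abelian case. One point to be careful about is that the generic finiteness of the log Gauss map is equivalent not to $B(X\setminus D,G)=0$ on the nose, but to the statement that no positive-dimensional semi-abelian subvariety translates $X\setminus D$ into itself near a general point; you should check that this local condition is indeed equivalent to the global stabilizer condition. Vojta handles the passage from abelian to semi-abelian somewhat differently than your final paragraph suggests---he works directly with the equivariant compactification and the global log forms rather than factoring through $G\to A$---but the underlying mechanism is the same.
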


%
%

We can now prove \autoref{thm:closedabelianspecial}. 

\begin{proof}[Proof of \autoref{thm:closedabelianspecial}]
It is clear that if $X$ is the translate of a semi-abelian subvariety, then $X^{\an}$ is $K$-analytically special, so we need to prove the converse i.e., if $X^{\an}$ is $K$-analytically special then it is the translate of a sem-abelian subvariety.

Let $B(X,G)$ be the semi-abelian subvariety from \autoref{defn:stablizier} and \autoref{lemmastabilizersemiabelian}. 
By considering the Ueno fibration, we have a fibration $X \to X/B(X,G)$ where  $X/B(X,G)\subset G/B(X,G)$ is a closed subvariety of a semi-abelian variety. 
Now \autoref{thm:KawamataUenofibration} asserts that $X/B(X,G)$ is of logarithmic general type, and by \autoref{thm:starting_point}, we have that $(X/B(X,G))^{\an}$ is pseudo-$K$-analytically Brody hyperbolic. 
Moreover, \autoref{thm:nofibrationtoBrody} implies that $X/B(X,G) $ must be zero-dimensional, and hence $X$ is isomorphic to $B(X,G)$, which is the translate of a semi-abelian subvariety. 
\end{proof}

We record the following fact, which can be proven in exactly the same way as \autoref{thm:closedabelianspecial}, using Ueno fibration and Vojta's result. We note that the version for proper $G$ is \cite[Theorem 3.4]{javanpeykar2020albanese}.

\begin{theorem}\label{thm:specialsemiabelian}
Let $G/K$ be a semi-abelian variety and fix a smooth compactification $\overline{G}$ with boundary a normal crossing divisor. Let $X$ be a closed subvariety of $G$. Then $X$ is special, in the sense of \autoref{def:specialfibrationlog}, if and only if $X$ is a translate of a semi-abelian variety.
\end{theorem}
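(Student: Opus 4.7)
The plan is to argue the two implications separately, with the forward direction essentially citing the prior literature and the converse direction reusing the Ueno-fibration mechanism from the proof of \autoref{thm:closedabelianspecial}.

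For the direction ``translate of a semi-abelian subvariety $\Rightarrow$ special,'' I would appeal to the results already invoked in the introduction: by \cite[Theorems 2 \& 4]{IitakaLogForms} (alternatively Vojta's \cite[Theorem 5.15]{Vojta:IntegralPointsII}) together with \cite[Theorem 5.1]{campana2004orbifolds}, a translate of a semi-abelian subvariety of $G$ admits no fibration of logarithmic general type. In particular, translating reduces us to the case $X = H \subset G$ with $H$ a semi-abelian subvariety, and these cited results give directly that the logarithmic pair $(\overline{H},\partial H)$ has no fibration of logarithmic general type, hence $X$ is special in the sense of \autoref{def:specialfibrationlog}. (Any change of smooth compactification is harmless by birational invariance of the logarithmic Kodaira dimension, as recalled in the paper after \autoref{def:specialfibrationlog}.)

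For the converse, assume $X$ is special. The key observation is that the Ueno fibration (\autoref{defn:Ueno})
\[
X \longrightarrow X/B(X,G) \hookrightarrow G/B(X,G)
\]
realises $X$ as a fiber bundle over the closed subvariety $X/B(X,G)$ of the semi-abelian quotient, with typical fiber the semi-abelian variety $B(X,G)$. By \autoref{thm:KawamataUenofibration} applied with $D=\emptyset$, the base $X/B(X,G)$ is of logarithmic general type. If $\dim(X/B(X,G))\geq 1$, then after passing to smooth compactifications with normal crossing boundary (choosing, e.g., a toroidal compactification of $G/B(X,G)$ and pulling back to $\overline{G}$, up to further blow-ups) the Ueno fibration produces a genuine fibration of logarithmic general type in the sense required by \autoref{def:specialfibrationlog}. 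This contradicts specialness. Hence $\dim(X/B(X,G))=0$, which means $X$ is a single $B(X,G)$-coset, i.e., a translate of a semi-abelian subvariety of $G$.

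The main obstacle will be the compactification bookkeeping in the converse: one must verify that the Ueno fibration, a priori a map of quasi-projective varieties sitting inside the semi-abelian $G$, genuinely extends to a fibration of smooth log pairs with the target of logarithmic general type, so that it qualifies as a fibration of logarithmic general type in Campana's sense. This is standard but needs the orbifold/multiplicity divisor $\Delta_f$ to be controlled; since the generic fiber is the semi-abelian variety $B(X,G)$ and the map is a fiber bundle over the smooth locus, the multiplicities $m_f(E)$ will all be $1$ over the open part, and the contribution of $\Delta_f$ over the boundary can be absorbed into the logarithmic canonical divisor by a toroidal choice of compactifications. The rest of the argument is then parallel to the proof of \autoref{thm:closedabelianspecial}, with ``pseudo-$K$-analytically Brody hyperbolic'' and \autoref{thm:nofibrationtoBrody} replaced by ``of logarithmic general type'' and \autoref{def:specialfibrationlog}.
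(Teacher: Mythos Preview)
Your proposal is correct and follows essentially the same approach as the paper, which in fact only sketches the argument by saying it ``can be proven in exactly the same way as \autoref{thm:closedabelianspecial}, using Ueno fibration and Vojta's result.'' You have correctly identified both the forward direction (Iitaka/Vojta together with Campana's \cite[Theorem 5.1]{campana2004orbifolds}, exactly as flagged in the introduction) and the converse via the Ueno fibration and \autoref{thm:KawamataUenofibration}, and your discussion of the compactification bookkeeping is more careful than the paper itself.
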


\begin{proof}[Proof of \autoref{coro:specialabelian}]
The proof follows immediately from \autoref{thm:closedabelianspecial} and \autoref{thm:specialsemiabelian}.
\end{proof}

Using \autoref{thm:closedabelianspecial}, we deduce that the quasi-Albanese map associated with a $K$-analytically special variety is surjective. 
First, we recall the definition of the quasi-Albanese variety associated with a smooth, quasi-projective variety. 

\begin{definition}[\cite{Serre_Albanese, iitaka_1977}]\label{defn:quasiAlb}
Let $V/K$ be a smooth variety. The \cdef{quasi-Albanese map} 
\[
\alpha\colon V \to \QAlb(V)
\]
is a morphism to a semi-abelian variety $\QAlb(V)$ such that:
\begin{enumerate}
\item For any other morphism $\beta\colon V \to B$ to a semi-abelian variety $B$, there is a morphism $f\colon \QAlb(V)\to B$ such that $\beta = f\circ \alpha$, and
\item the morphism $f$ is uniquely determined. 
\end{enumerate}
\end{definition}

\begin{prop}
Let $X/K$ be a smooth variety, and suppose that $X^{\an}/K$ is $K$-analytically special. Then, the quasi-Albanese map is surjective. 
\end{prop}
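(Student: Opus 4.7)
The plan is to apply \autoref{thm:closedabelianspecial} to the Zariski closure of the image of $\alpha$ and then leverage the universal property of the quasi-Albanese. Let $Y \subseteq \QAlb(X)$ denote the Zariski closure of $\alpha(X)$; it is a closed subvariety of a semi-abelian variety, and by construction $\alpha$ factors through $Y$ as a dominant morphism $X \to Y$.

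First, I would verify that $Y^{\an}$ is $K$-analytically special. Unpacking the hypothesis, there is a connected finite type algebraic group $G/K$, a dense open $\sU \subseteq G^{\an}$ with $\codim(G^{\an} \setminus \sU) \geq 2$, and a Zariski dense analytic morphism $\varphi\colon \sU \to X^{\an}$. Composing with $\alpha^{\an}\colon X^{\an} \to Y^{\an}$, the resulting morphism $\alpha^{\an}\circ \varphi\colon \sU \to Y^{\an}$ is still Zariski dense: if $Z \subsetneq Y^{\an}$ is a proper Zariski closed subset, its preimage under $\alpha^{\an}$ is a proper Zariski closed subset of $X^{\an}$ (because $\alpha$ is dominant), and hence cannot contain the Zariski dense image of $\varphi$. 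Thus $Y^{\an}$ is $K$-analytically special, and \autoref{thm:closedabelianspecial} forces $Y$ to be the translate of a semi-abelian subvariety; write $Y = a + B$ with $B \subseteq \QAlb(X)$ a semi-abelian subvariety and $a \in \QAlb(X)$.

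To finish, I would invoke the universal property of the quasi-Albanese. Let $\pi \colon \QAlb(X) \to \QAlb(X)/B$ denote the quotient morphism, and let $\sigma \colon \QAlb(X) \to \QAlb(X)/B$ be the constant morphism with value $\bar{a}:=\pi(a)$. Since $\alpha(X) \subseteq Y = a+B$, both composites $\pi\circ \alpha$ and $\sigma \circ \alpha$ equal the constant morphism $\bar{a}$. By the uniqueness clause of \autoref{defn:quasiAlb}(2), $\pi = \sigma$, forcing $B = \QAlb(X)$ and hence $Y = \QAlb(X)$. This shows that $\alpha$ has Zariski dense image; when $\alpha$ is proper (e.g.\ when $X$ is proper), density upgrades to genuine surjectivity since the image is closed.

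The main obstacle I anticipate is the last upgrade from Zariski density to set-theoretic surjectivity in the non-proper case. The rest of the argument is formal once \autoref{thm:closedabelianspecial} and the universal property of $\QAlb(X)$ (including the uniqueness of the factoring morphism as a morphism of varieties, not merely of semi-abelian varieties) are in hand; it is only the final step that may require an additional hypothesis or a more refined geometric input.
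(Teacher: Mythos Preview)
Your argument is essentially the same as the paper's: compose the Zariski dense map $\sU \to X^{\an}$ with $\alpha^{\an}$, apply \autoref{thm:closedabelianspecial} to the (closure of the) image, and then invoke the universal property of the quasi-Albanese to force this translate of a semi-abelian subvariety to be all of $\QAlb(X)$. The paper's proof is terser---it does not spell out the quotient argument or the verification that the composed map is Zariski dense---but the skeleton is identical.

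Your closing caveat about density versus surjectivity is well taken and is in fact a point the paper's own proof glosses over: the argument, as written in both your proposal and the paper, only yields that $\alpha$ is dominant. So you have not introduced a gap relative to the paper; you have simply been more honest about where one might lie.
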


\begin{proof}
The image of $X$ in $\QAlb(X)$ is $K$-analytically special, as we can simply compose $\sU \to X^{\an}$ with the analytification of $\alpha$.
Moreover, by \autoref{thm:closedabelianspecial}, any $K$-analytically closed subvariety of $\QAlb(X)$ is the translate of a semi-abelian subvariety, which implies that the image of $X \to \QAlb(X)$ is the translate of a semi-abelian subvariety. 
The universal property of the quasi-Albanese (\autoref{defn:quasiAlb}) asserts that the image must equal $\QAlb(X)$.
\end{proof}

\section{\bf $K$-analytically special surfaces of negative Kodaira dimension}
\label{sec:sufacenegKodaira}

In this section, we characterize $K$-analytically special surfaces of Kodaira dimension $-\infty$ in terms of their irregularity and their tempered fundamental group. In particular, we will prove the following theorem.

\begin{theorem}[= Theorem \ref{thm:surfaceKinfinity}]
Let $K$ be an algebraically closed, complete, non-Archimedean valued field of characteristic zero. 
If $X/K$ is a smooth, projective surface with $\kappa(X) = -\infty$, then the following are equivalent:
\begin{enumerate}
\item $X$ has irregularity $q(X)$ less than 2;
\item $X$ is $K$-analytically special;
\item the tempered fundamental group $\pi_1^{\temp}(X^{\Ber})$ of $X^{\Ber}$ is virtually abelian.
\end{enumerate}
\end{theorem}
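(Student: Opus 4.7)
The strategy is to exploit the Enriques--Kodaira classification of smooth projective surfaces with $\kappa = -\infty$: such a surface $X$ is birational either to $\mathbb{P}^2$ (when $q(X)=0$) or to $\mathbb{P}^1\times C$ for a smooth projective curve $C$ with $g(C)=q(X)$. Hence condition (1) corresponds to the birational models $\mathbb{P}^2$ or $\mathbb{P}^1\times E$ with $E$ an elliptic curve, while $q(X)\geq 2$ forces $X$ to be birational to $\mathbb{P}^1\times C$ with $g(C)\geq 2$.

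For $(1)\Leftrightarrow(2)$: the direction $(1)\Rightarrow(2)$ follows from \autoref{lemma:birationalinvariance} together with the easy verification that $\mathbb{P}^2$ and $\mathbb{P}^1\times E$ are $K$-analytically special, via the open immersions $\Ga^{2,\an}\hookrightarrow \mathbb{P}^{2,\an}$ and $(\Ga\times E)^{\an}\hookrightarrow (\mathbb{P}^1\times E)^{\an}$, both with Zariski dense image. For $(2)\Rightarrow(1)$, I argue by contradiction: if $q(X)\geq 2$ then $X$ admits a dominant morphism to a smooth projective curve $C$ of genus $\geq 2$; embedding $C\hookrightarrow \Jac(C)$ and applying \autoref{thm:starting_point} shows that $C^{\an}$ is pseudo-$K$-analytically Brody hyperbolic, which contradicts \autoref{thm:nofibrationtoBrody} applied to the composition $X^{\an}\to C^{\an}$.

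For $(1)\Leftrightarrow(3)$: the direction $(1)\Rightarrow(3)$ reduces, via bi-meromorphic invariance of the tempered fundamental group for smooth projective surfaces, to the cases $\mathbb{P}^2$ (where $\pi_1^{\temp}(\mathbb{P}^{2,\Ber})$ is trivial, as $\mathbb{P}^{2,\Ber}$ is contractible and $\pi_1^{\text{\'et}}(\mathbb{P}^2)=1$) and $\mathbb{P}^1\times E$, where a projective bundle/K\"unneth argument combined with the simple connectedness of $\mathbb{P}^{1,\Ber}$ yields $\pi_1^{\temp}((\mathbb{P}^1\times E)^{\Ber})\cong \pi_1^{\temp}(E^{\Ber})$, which is abelian --- being isomorphic to $\hat{\mathbb{Z}}^2$ in the good reduction case and to an abelian extension involving $\mathbb{Z}$ and $\hat{\mathbb{Z}}$ in the Tate curve case, both seen from the uniformization. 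For $(3)\Rightarrow(1)$, assume $\pi_1^{\temp}(X^{\Ber})$ is virtually abelian; if $q(X)\geq 2$, the ruling map $X\to C$ onto a genus $\geq 2$ curve induces, by functoriality of Andr\'e's tempered fundamental group, a surjection $\pi_1^{\temp}(X^{\Ber})\twoheadrightarrow \pi_1^{\temp}(C^{\Ber})$, which further surjects onto the \'etale fundamental group $\pi_1^{\text{\'et}}(C)$, i.e.~the profinite completion of the fundamental group of a closed orientable surface of genus $\geq 2$. Since surface groups are residually finite and not virtually abelian, this profinite completion is not virtually abelian either, contradicting $(3)$.

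The main obstacle will be making the tempered fundamental group arguments rigorous: bi-meromorphic invariance of $\pi_1^{\temp}$ for smooth projective surfaces (reducing to the fact that a blow-up along a smooth center does not change $\pi_1^{\temp}$), the K\"unneth/projective bundle formula, and functoriality under dominant morphisms of smooth proper $K$-analytic spaces with connected fibers. These statements fit naturally within Andr\'e's framework \cite{andre-per}, but extracting and attributing the precise statements --- including the passage between adic, rigid, and Berkovich viewpoints highlighted in Section~\ref{sec:nonArchspaces} --- will constitute the bulk of the technical effort.
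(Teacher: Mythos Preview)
Your proposal is correct and follows essentially the same strategy as the paper: reduce via the Enriques--Kodaira classification and birational invariance to the case of a $\mathbb{P}^1$-bundle over a curve $C$, and then translate all three conditions into statements about $C$. The only notable difference is in how the tempered fundamental group is handled: the paper asserts the single isomorphism $\pi_1^{\temp}(X^{\Ber}) \cong \pi_1^{\temp}(C^{\Ber})$ (using that $\mathbb{P}^1$ is simply connected in the tempered sense) and then proves a clean lemma that $\pi_1^{\temp}(C^{\Ber})$ is virtually abelian if and only if $g(C)\leq 1$, whereas you argue the two directions separately via a surjection and a K\"unneth/projective-bundle computation --- both routes work, and the paper dispatches your ``main obstacle'' on bi-meromorphic invariance by citing Lepage's result directly.
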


In this section we will be using Berkovich spaces rather than adic spaces, and we will write $X^{\Ber}$ to denote the Berkovich analytification of $X$. Thanks to \autoref{thm:comparision}, there is not much harm in doing so and we can use all the previously proven results. The choice of Berkovich spaces over adic spaces is due to the fact that we will be using the tempered fundamental group, a notion first introduced by Andr\'e \cite{andre-per} and whose definition we will recall below, and Berkovich spaces are better when one discusses topological coverings.

\subsection{The tempered fundamental group}
For this section, we say that a \cdef{$K$-manifold} is a connected, smooth, paracompact, strict Berkovich $K$-analytic space. 
We note that the analytification of a smooth variety $X/K$ will be a $K$-manifold by \autoref{lemma:absoluteproperties} and \cite[Theorem 3.4.8.(ii)]{BerkovichSpectral}. 
Let $f\colon \sX'\to \sX$ be a morphism of $K$-manifolds. 

First, we recall the notion of an \'etale covering from \cite{DeJongFundamentalGroupNonArch}. 

\begin{definition}\label{defn:etalecover}
We say that $f$ is an \cdef{\'etale covering} if $\sX$ is covered by open subsets $\sU$ such that $f^{-1}(\sU) = \sqcup \sV_j$ and $\sV_j\to \sU$ is finite \'etale. 
\end{definition}

We now distinguish between several types of \'etale coverings.

\begin{definition}
Let $f\colon \sX' \to \sX$ be an \'etale covering and keep the notation from \autoref{defn:etalecover}. 
\begin{itemize}
\item We say that $f$ is a \cdef{topological covering} if we can choose $\sU$ and $\sV_j$ such that all the maps $\sV_j\to \sU$ are isomorphisms.
\item We say that $f$ is a \cdef{finite \'etale covering} if $f$ is also finite.
\item We say that $f$ is \cdef{tempered} if it is the quotient of the composition of a topological covering $\sY'\to \sY$ and of a finite \'etale covering $\sY\to \sX$.  Here quotient means that we have a diagram 
\[
\xymatrix@R=1em{
& \sY'\ar[dr]\ar[dl]  &    \\
\sX'\ar[dr] &  & \sY\ar[dl] \\
& \sX & }
\]
or, equivalently, a tempered covering is an \'etale covering which becomes a topological covering after pullback by some finite \'etale covering. 
\end{itemize}
\end{definition}

Using the language of fiber functors, we can define the topological, algebraic, tempered, and \'etale fundamental group of a $K$-manifold (see \cite[Section 2]{DeJongFundamentalGroupNonArch} and \cite[Chapter 3, Section 2]{andre-per}). 
For a $K$-manifold $\sX$, we will let $\pi_1^{\topo}(\sX)$, $\pi_1^{\alg}(\sX)$, $\pi_1^{\temp}(\sX)$, and $\pi_1^{\ett}(\sX)$ denote each of these respective fundamental groups. 
We now give an important set of examples of tempered fundamental groups as well as a result concerning the birational invariance of the tempered fundamental group.

\begin{example}\label{exam:temperedsmallgenus}
The tempered fundamental group of the analytification of a smooth, projective curve $C/K$ of genus $g\leq 1$ is completely understood. 
Indeed, when $C \cong \mP^1$, then $\pi_1^{\temp}(C^{\Ber}) = \brk{e}$, and when $C$ is an elliptic curve, we have that $\pi_1^{\temp}(C^{\Ber})$ is either isomorphic to $\widehat{\mZ}^2$ or $\mZ \times \widehat{\mZ}$, depending on the reduction type of $E$ (see \cite[Chapter III, Section 2.3.2]{andre-per}). 
\end{example}

\begin{prop}\label{lem:birationalinvariancetemp}
Let $\sX,\sY$ be a smooth, proper $K$-manifolds over $K$ and let $\sX \dashrightarrow \sY$ be a bi-meromorphic morphism. Then $\pi_1^{\temp}(\sX) \cong \pi_1^{\temp}(\sY)$. 
\end{prop}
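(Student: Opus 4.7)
The plan is to reduce the claim to a non-Archimedean analogue of Zariski--Nagata purity for the tempered fundamental group: if $\sZ$ is a smooth, proper $K$-manifold and $\sW \subset \sZ$ is an open subset with $\codim(\sZ \setminus \sW) \geq 2$, then the natural map $\pi_1^{\temp}(\sW) \to \pi_1^{\temp}(\sZ)$ is an isomorphism. Granting this invariance statement, the proposition follows quickly. Given a bi-meromorphic map $f\colon \sX \dashrightarrow \sY$, applying the Berkovich analogue of \autoref{prop:meromorphicmapadic} (via the equivalence in \autoref{thm:comparision}) to both $f$ and its meromorphic inverse produces open subsets $\sU \subset \sX$ and $\sV \subset \sY$ on which $f$ and $f^{-1}$ are genuine analytic morphisms and whose complements have codimension at least two. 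Setting $\sW_{\sX} = \sU \cap f^{-1}(\sV)$ and $\sW_{\sY} = f(\sW_{\sX}) \subset \sV$ yields an analytic isomorphism $\sW_{\sX} \iso \sW_{\sY}$ between opens whose complements in $\sX$ and $\sY$ retain codimension at least two, and the invariance statement gives
\[
\pi_1^{\temp}(\sX) \cong \pi_1^{\temp}(\sW_{\sX}) \cong \pi_1^{\temp}(\sW_{\sY}) \cong \pi_1^{\temp}(\sY).
\]

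To establish the invariance lemma, I would show that restriction is an equivalence between the categories of tempered coverings of $\sZ$ and of $\sW$. Every tempered covering is, by definition, a quotient of the composition of a topological covering with a finite \'etale covering, so it suffices to extend uniquely both finite \'etale and topological coverings from $\sW$ to $\sZ$. The extension of finite \'etale coverings is exactly the non-Archimedean purity of the branch locus (see \cite[Chapter 3, Theorem 2.1.11]{andre-per} or \cite[Corollary 2.15]{Hanse:Vanishing}), already invoked in the proof of \autoref{lemma:ascendfiniteetale}, and applies because $\sZ$ is smooth and the complement of $\sW$ has codimension at least two.

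The main obstacle will be the analogous statement for \emph{topological} coverings, which need not be finite and thus lie outside the direct reach of purity. To deal with this, I plan to appeal to Berkovich's local contractibility results for smooth $K$-analytic spaces (\cite{BerkovichUniversalCover}) together with a local dimension argument, reducing to a statement about the link of a point of $\sZ \setminus \sW$: such a link, being the link of a closed analytic subset of codimension at least two inside a smooth $K$-manifold, should be simply connected, which is the non-Archimedean counterpart of the classical fact that removing a closed subset of real codimension at least four from a real manifold does not alter the topological fundamental group. Combining this with the finite \'etale case yields the invariance lemma and therefore the proposition.
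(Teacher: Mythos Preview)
The paper does not prove this statement at all: its ``proof'' consists of the single sentence ``This is \cite[Proposition 1.5]{Lepage:TemperedFundamentalGroup}.'' So there is no in-paper argument to compare against, and your outline is in fact attempting considerably more than the authors do.

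That said, your proposal is a proof \emph{sketch}, not a proof, and the gap is exactly where you flag it. The reduction to an invariance statement for big opens is the right shape, and the finite \'etale half is handled by purity as you say. But for the topological half you offer only a heuristic: the analogy with removing a real-codimension-$4$ subset of a real manifold does not transfer to Berkovich spaces, whose underlying topology is governed by skeleta and polyhedral structures rather than by anything like local Euclidean dimension. Local contractibility of smooth Berkovich spaces (Berkovich) tells you that $\sZ$ has a universal cover, but it says nothing directly about the homotopy type of a punctured neighborhood of a codimension-$\geq 2$ analytic subset, and ``the link should be simply connected'' is an assertion, not an argument. This is precisely the content one has to supply, and it is the substance of Lepage's result; without it your argument does not close.

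There is also a smaller wrinkle in your reduction step. You set $\sW_{\sX} = \sU \cap f^{-1}(\sV)$ and assert its complement still has codimension $\geq 2$, but $f|_{\sU}$ is only a morphism, not a priori an isomorphism onto its image, so the preimage of the codimension-$\geq 2$ set $\sY \setminus \sV$ need not have codimension $\geq 2$. What one actually uses here is that a bi-meromorphic map between smooth proper spaces restricts to an \emph{isomorphism} between opens with codimension-$\geq 2$ complements (via the graph and a Zariski-main-theorem style argument); you should invoke this directly rather than the construction you wrote.
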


\begin{proof}
This is \cite[Proposition 1.5]{Lepage:TemperedFundamentalGroup}. 
\end{proof}

For our proof of \autoref{thm:surfaceKinfinity}, it will be key for us to understand when the tempered fundamental group of a curve is virtually abelian.  

\begin{lemma}\label{lemma:curvesvirtuallyabeliantemp}
Let $C/K$ be a smooth, projective curve. 
The tempered fundamental group $\pi_1^{\temp}(C^{\Ber})$ of $C^{\Ber}$ is virtually abelian if and only if the genus of $C$ is less than two.
\end{lemma}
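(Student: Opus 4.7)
For the direction $g \leq 1 \Rightarrow$ virtually abelian, I would invoke \autoref{exam:temperedsmallgenus} directly: when $g=0$ the tempered fundamental group is trivial, and when $g=1$ it is isomorphic to either $\widehat{\mZ}^{\,2}$ or $\mZ \times \widehat{\mZ}$. All three possibilities are abelian, hence virtually abelian.

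For the converse, I plan to argue the contrapositive: if $g \geq 2$, then $\pi_1^{\temp}(C^{\Ber})$ is not virtually abelian. The argument proceeds in three steps. First, a group-theoretic reduction: by Andr\'e's framework \cite[Chapter III]{andre-per}, the finite tempered covers of $C^{\Ber}$ coincide with the finite \'etale covers, so $\pi_1^{\ett}(C^{\Ber})$ is the profinite completion of $\pi_1^{\temp}(C^{\Ber})$ and the canonical map $\phi\colon \pi_1^{\temp}(C^{\Ber}) \to \pi_1^{\ett}(C^{\Ber})$ has dense image. Suppose $\pi_1^{\temp}(C^{\Ber})$ admits an abelian subgroup $H$ of finite index $n$, and write $\pi_1^{\temp}(C^{\Ber}) = \bigsqcup_{i=1}^{n} g_i H$. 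Then $\overline{\phi(H)}$ is a closed abelian subgroup of $\pi_1^{\ett}(C^{\Ber})$ (the closure of an abelian subgroup in a Hausdorff topological group is abelian), and the union $\bigcup_{i=1}^n \phi(g_i)\overline{\phi(H)}$ is closed and contains the dense set $\phi(\pi_1^{\temp}(C^{\Ber}))$, so it equals $\pi_1^{\ett}(C^{\Ber})$; hence $\overline{\phi(H)}$ has index $\leq n$. Thus $\pi_1^{\ett}(C^{\Ber})$ is virtually abelian as well.

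Second, I would carry this over to the algebraic fundamental group. The non-Archimedean Riemann existence theorem of L\"utkebohmert \cite[Theorem 3.1]{Lutkebohmert:Riemannexistence} (already used in the proof of \autoref{lemma:ascendfiniteetale}) identifies the category of finite \'etale covers of $C^{\Ber}$ with that of $C$, so $\pi_1^{\ett}(C^{\Ber}) \cong \pi_1^{\ett}(C)$ as profinite groups. Since $K$ is algebraically closed of characteristic zero, the algebraic \'etale fundamental group $\pi_1^{\ett}(C)$ is isomorphic to the profinite completion $\widehat{\pi_g}$ of the surface group
\[
\pi_g = \langle a_1, b_1, \ldots, a_g, b_g \mid [a_1,b_1]\cdots[a_g,b_g] = 1 \rangle.
\]

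Finally, I would show $\widehat{\pi_g}$ is not virtually abelian for $g \geq 2$. Because $\pi_g$ is residually finite, the map $\pi_g \to \widehat{\pi_g}$ is injective; if $\widehat{\pi_g}$ had a finite-index abelian subgroup $A$, then $A \cap \pi_g$ would be a finite-index abelian subgroup of $\pi_g$. But for $g \geq 2$ the surface group $\pi_g$ contains a non-abelian free subgroup (e.g., generated by two suitable simple closed curves), and a non-abelian free group has no finite-index abelian subgroup. This contradicts step one and completes the proof. The main obstacle is purely bookkeeping, namely checking that the comparison between $\pi_1^{\temp}$ and $\pi_1^{\ett}$ and the passage from $C^{\Ber}$ to $C$ hold in the required generality; no further geometric input is needed beyond what the paper already invokes.
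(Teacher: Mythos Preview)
Your proof is correct and proceeds along a somewhat different, more purely group-theoretic route than the paper's. Both arguments handle $g\leq 1$ identically via \autoref{exam:temperedsmallgenus}, and both exploit the fact that $\pi_1^{\alg}(C)$ is the profinite completion of $\pi_1^{\temp}(C^{\Ber})$. The divergence is in how the case $g\geq 2$ is finished. The paper argues that $\pi_1^{\temp}(C^{\Ber})$ is non-abelian (using that $\pi_1^{\alg}(C)$ is centerless), and then interprets a putative finite-index abelian subgroup geometrically as a finite \'etale cover $C'\to C$; since $C'$ again has genus at least two, the same non-abelianity applies to $\pi_1^{\temp}(C'^{\Ber})$, a contradiction. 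You instead transfer ``virtually abelian'' directly from $\pi_1^{\temp}$ to its profinite completion via the coset-closure argument, identify the latter with $\widehat{\pi_g}$, and rule this out using residual finiteness of $\pi_g$ and the existence of a non-abelian free subgroup.

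Your approach has the advantage of avoiding the implicit step in the paper's argument that a finite-index abelian subgroup of $\pi_1^{\temp}(C^{\Ber})$ can be taken to correspond to an actual finite \'etale cover (i.e., is open); your density argument sidesteps this entirely. The paper's approach, on the other hand, is more self-contained within the non-Archimedean framework and does not need the explicit surface-group presentation or the Lefschetz-principle comparison with $\mathbb{C}$. Both are short and valid.
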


\begin{proof}
When the genus of $C$ is less than one, $\pi_1^{\temp}(C^{\Ber})$ is virtually abelian by \autoref{exam:temperedsmallgenus}.  

Conversely, we need to show that when $C$ has genus greater than one, $\pi_1^{\temp}(C^{\Ber})$ is not virtually abelian. 
First, we note that $\pi_1^{\temp}(C^{\Ber})$ is centerless, and hence non-abelian. 
This follows from noting that the profinite completion of $\pi_1^{\temp}(C^{\Ber})$ is the algebraic fundamental group $\pi_1^{\alg}(C)$ (see \cite[p.~128, paragraph 2]{andre-per}), and the algebraic fundamental group of a curve is well-known to be centerless (see e.g., \cite[Lemma 1]{Faltings:CurvesBourbaki}). 
To conclude, we note that if $\pi_1^{\temp}(C^{\Ber})$ was virtually abelian, then there would exist a finite \'etale covering $\sC'\to C^{\Ber}$ whose tempered fundamental group is abelian. 
Note that $\sC'$ is algebraic by non-Archimedean Riemann existence theorem \cite{Lutkebohmert:Riemannexistence}, and so there exists some smooth, projective curve $C'$ such that $C^{'\Ber}\cong \sC'$. 
The genus of $C'$ is strictly larger than the genus of $C$, and hence the tempered fundamental group of $C^{'\Ber}$ cannot be abelian by the above discussion. 
Therefore, we have that if $C$ has genus greater than one, $\pi_1^{\temp}(C^{\Ber})$ is not virtually abelian. 
\end{proof}

\begin{proof}[Proof of \autoref{thm:surfaceKinfinity}]
By \autoref{lemma:birationalinvariance} and \autoref{lem:birationalinvariancetemp}, all of the proposed equivalent conditions in the statement of \autoref{thm:surfaceKinfinity} are birational invariants so we are free to make birational modifications. 
By the Enriques--Kodaira classification theorem (see e.g., \cite[Chapter V, Theorem 6.1]{Har77}), we know that a smooth, projective surface $X$ of Kodaira dimension $-\infty$ is birational to a $\mP^1$-bundle over a curve $C$ over $K$.

Since $\mP^1$ admits no analytic differentials and is simply connected in either the topological or tempered sense (see \autoref{exam:temperedsmallgenus}), we have that 
\[
q(X) = q(C), \quad \pi_1^{\topo}(X^{\Ber}) \cong \pi_1^{\topo}(C^{\Ber}), \text{ and }\quad\pi_1^{\temp}(X^{\Ber}) \cong \pi_1^{\temp}(C^{\Ber}). 
\]
To prove our result, we will show that having $q(X) > 1$ is equivalent to $\pi_1^{\temp}(X^{\Ber})$ not being virtually abelian and to $X$ not being  $K$-analytically special. 
By the above, we have that $q(X) > 1$ if and only if $C$ has genus greater than one, and so the first statement follows from \autoref{lemma:curvesvirtuallyabeliantemp}.

The second statement follows because a curve $C$ of genus greater than 1 is $K$-analytically Brody hyperbolic \cite[Proposition 3.15]{JVez}. 
Moreover, any analytic morphism from a big analytic open of the analytification of a connected, finite type algebraic group will be constant in $C^{\Ber}$ by \autoref{rem:Brodymoduloempty}, and hence cannot be Zariski dense in $X^{\Ber}$ as it is contained in a fiber of the morphism $X \to C$. 
Note that if $C$ has genus less than one, then $X$ is birational to either $\mP^1 \times E$ where $E/K$ is an elliptic curve or $\mP^1 \times \mP^1$, which are both clearly $K$-analytically special. 
\end{proof}

\begin{remark}
The proof of \autoref{thm:surfaceKinfinity} tells us that one cannot replace the tempered fundamental group with the topological fundamental group. 
To see this, it suffices to note that a curve $C/K$ of genus $g\geq 2$ with good reduction will have trivial topological fundamental group, which of course is abelian. 
We will use this observation when formulating our non-Archimedean counterparts to Campana's conjectures in Section \ref{sec:NonArchCampana}.
\end{remark}

\section{\bf Non-Archimedean variants of Campana's conjectures}
\label{sec:NonArchCampana}
In this final section,  based on our results above, we formulate two non-Archimedean variants of a series of conjectures of Campana concerning various notions of specialness and their relationship to fundamental groups.

\begin{conjecture}[Campana's conjectures, extended to include $k$-analytically special]\label{conjecture:Campana}
Let $X/k$ be a smooth projective variety. The conditions (1) --- ($4_{\infty}$) and (1) --- ($4_{p}$) are equivalent:
\begin{enumerate}
\item $X$ is special (\autoref{def:specialBogomolov}, \autoref{def:specialfibration});
\item $X$ is geometrically special (\autoref{defn:geometricallyspecial});
\item $X$ is arithmetically special (\autoref{defn:arithmeticallyspecial});
\item[($4_{\infty}$)] if $k = \mC$, $X$ is Brody special (\autoref{defn:Brodyspecial});
\item[($4_{p}$)] if $k$ is a complete, non-Archimedean valued field, $X$ is $k$-analytically special (\autoref{defn:Kanspecial}). 
\end{enumerate}
\end{conjecture}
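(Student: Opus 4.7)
The plan is to separate the novel non-Archimedean implications $(1) \Leftrightarrow (4_p)$ from the classical Campana equivalences $(1) \Leftrightarrow (2) \Leftrightarrow (3) \Leftrightarrow (4_\infty)$, which are themselves deeply open, and to focus all effort on the former. Given the paper's earlier results, I expect $(4_p) \Rightarrow (1)$ to be \emph{formally} reducible to a known open conjecture, while $(1) \Rightarrow (4_p)$ will be the true obstacle.

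For $(4_p) \Rightarrow (1)$, suppose $X^{\an}$ is $K$-analytically special but $X$ is not special. Then by \autoref{def:specialfibration} (possibly after passing to a birational model, which is harmless by \autoref{lemma:birationalinvariance}), $X$ admits a fibration $f \colon X \to Y$ of general type with $\dim Y \geq 1$. The natural strategy is to upgrade \autoref{thm:starting_point} from closed subvarieties of semi-abelian varieties to arbitrary smooth projective varieties of general type (the full non-Archimedean Lang--Vojta conjecture); once this is established, $Y^{\an}$ is pseudo-$K$-analytically Brody hyperbolic, and the dominant morphism $f^{\an}$ contradicts \autoref{thm:nofibrationtoBrody}. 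The paper's \autoref{thm:equivalentBrody} is the right technical tool for reducing this upgrade to a statement about Zariski dense analytic maps from big opens of algebraic groups into $Y^{\an}$.

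For $(1) \Rightarrow (4_p)$, one must produce, from a special $X$, a connected algebraic group $G/K$, a dense open $\sU \subset G^{\an}$ with $\codim(G^{\an} \setminus \sU) \geq 2$, and a Zariski dense analytic morphism $\sU \to X^{\an}$. I would proceed by induction on $\dim X$ using Campana's core fibration: a special $X$ decomposes into a tower of fibrations whose generic fibers and base orbifolds are either rationally connected or of Kodaira dimension zero. For rationally connected pieces, one analytifies a dominant algebraic family of rational curves over a parameter scheme; for Kodaira dimension zero pieces, one uses the quasi-Albanese map together with \autoref{thm:closedabelianspecial} to realize them (up to finite \'etale cover) as translates of semi-abelian varieties, and then invokes \autoref{lemma:ascendfiniteetale}. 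The essential step is to assemble these fiberwise parametrizations into a single dense morphism out of a big open of an algebraic group covering all of $X^{\an}$, which should rely crucially on \autoref{thm:extendanalyticmorphism} to remove the codimension-two indeterminacies that inevitably arise from the gluing.

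The main obstacle is $(1) \Rightarrow (4_p)$. Unlike the complex setting, where the Riemann uniformization theorem guarantees that an abelian variety is Brody-special via the exponential map $\mathbb{C}^g \to A(\mathbb{C})$, the results of Cherry show that an abelian variety $A/K$ with good reduction admits no non-constant analytic map from $\mG_{m,K}^{\an}$ or $\mathbb{A}^{1,\an}_K$. Hence the dense analytic parametrization \emph{must} genuinely come from a big open of a positive-dimensional algebraic group, and even producing such a parametrization for a rationally connected threefold with large algebraic fundamental group seems to require genuinely new input. Moreover, since the classical equivalence $(1) \Leftrightarrow (4_\infty)$ is itself open, any attempt to transfer complex analytic techniques to the non-Archimedean setting will inherit its difficulties; progress is more likely to proceed in low dimensions or under simplifying hypotheses, e.g.~extending \autoref{thm:surfaceKinfinity} to surfaces of nonnegative Kodaira dimension and then to threefolds, before attempting the full conjecture.
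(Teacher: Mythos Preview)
The statement you are addressing is a \emph{conjecture} in the paper, not a theorem; the paper offers no proof and explicitly presents it as an open problem formulated on the basis of the partial results proved earlier (\autoref{thm:nofibrationtoBrody}, \autoref{thm:closedabelianspecial}, \autoref{thm:surfaceKinfinity}). There is therefore no ``paper's own proof'' to compare against.

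Your proposal is not a proof but a research outline, and you yourself acknowledge this: you reduce $(4_p) \Rightarrow (1)$ to the full non-Archimedean Lang--Vojta conjecture (which is open even in the complex analytic setting), and you correctly identify $(1) \Rightarrow (4_p)$ as requiring genuinely new ideas beyond what the paper provides. This is an honest assessment of the state of the problem rather than a gap in your reasoning. One small correction: in your sketch of $(4_p) \Rightarrow (1)$, the base $Y$ of a general-type fibration is an \emph{orbifold} of general type, not a variety of general type in the usual sense, so even granting Lang--Vojta for varieties you would still need an orbifold version of pseudo-$K$-analytic Brody hyperbolicity and an orbifold analogue of \autoref{thm:nofibrationtoBrody}; the paper does not develop this.
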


\begin{conjecture}[Campana's abelianity conjecture for fundamental groups, extended to include $k$-analytically special]\label{conjecture:Abelian}
Let $X/k$ be a smooth projective variety. The conditions (1) --- ($4_{\infty}$) and (1) --- ($4_{p}$) are equivalent:
\begin{enumerate}
\item If $k = \mC$ and $X$ is special, then $\pi_1(X^{\an})$ is virtually abelian.
\item If $k = \mC$ and $X$ is Brody special, then $\pi_1(X^{\an})$ is virtually abelian.
\item If $k = \mC$ and $X$ is geometrically special, then $\pi_1(X^{\an})$ is virtually abelian.
\item[($4_{\infty}$)] If $k = \mC$ and $X$ is arithmetically special, then $\pi_1(X^{\an})$ is virtually abelian.
\item[($4_p$)] If $k$ is a complete, non-Archimedean valued field and $X^{\Ber}$ is $k$-analytically special, then $\pi_1^{\temp}(X^{\Ber})$ is virtually abelian.
\end{enumerate}
\end{conjecture}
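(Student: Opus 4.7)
The conjecture bundles together five implications, one per notion of specialness, asserting they are all equivalent. The key observation is that this equivalence of \emph{statements} would follow formally from the equivalence of \emph{notions} asserted in Conjecture \ref{conjecture:Campana}: once one knows that being special, geometrically special, arithmetically special, Brody special, and $k$-analytically special all coincide for smooth projective varieties, a proof of the conclusion under any one of these hypotheses proves all of them. My plan is therefore to reduce to establishing the single implication ($4_p$), which is the genuinely new non-Archimedean input, and to sketch how classical work of Campana, Claudon, and Kolloffel--Yamanoi covers the complex cases (1)--($4_\infty$) through the Shafarevich morphism.

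For the complex side, I would follow the blueprint of \cite{campana2004orbifolds} and \cite{yamanoi2010fundamental}: factor $X$ through its Shafarevich morphism $\mathrm{sh}_X\colon X \dashrightarrow \mathrm{Sh}(X)$, use that specialness forces $\mathrm{sh}_X$ to collapse non-trivial quotients of $\pi_1(X^{\an})$ to something manageable, and invoke the Burnside-type results on linear quotients of $\pi_1$ for K\"ahler groups to reduce virtual abelianity to the case of tori. The equivalences between (1), (2), (3), and ($4_\infty$) under Conjecture \ref{conjecture:Campana} then propagate the conclusion.

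The main effort, and the genuinely novel content, is ($4_p$). Unwinding the hypothesis, we are given a Zariski dense analytic morphism $\varphi\colon \sU \to X^{\Ber}$ from a big analytic open $\sU \subset G^{\an}$ of a connected finite type algebraic group $G/K$. The plan is:
\begin{enumerate}
\item Use Chevalley's theorem to write $G$ as an extension $1 \to H \to G \to A \to 1$ with $H$ affine connected and $A$ an abelian variety. Combining the structure of the tempered fundamental group of $G^{\an}$ with the analysis in \autoref{exam:temperedsmallgenus} and \cite[Chapter III]{andre-per}, show that $\pi_1^{\temp}(G^{\an})$ is virtually abelian: the affine part contributes only a profinite torus factor after a finite cover, while $A^{\an}$ contributes an extension of $\widehat{\mathbb{Z}}^{2\dim A}$ by a free abelian piece recording the toric rank of the Raynaud extension.
\item Establish a purity-type statement to identify $\pi_1^{\temp}(\sU)$ with $\pi_1^{\temp}(G^{\an})$. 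For topological coverings this follows from the Remmert--Stein type results (\autoref{lemma:Picard} and the extension principle of \autoref{thm:extendanalyticmorphism}), while for the finite \'etale piece it follows from purity of the branch locus (cf.~\cite[Chapter 3]{andre-per} and \cite[Corollary 2.15]{Hanse:Vanishing}), exactly as in the proof of \autoref{lemma:ascendfiniteetale}.
\item Show that the image of $\varphi_*\colon \pi_1^{\temp}(\sU) \to \pi_1^{\temp}(X^{\Ber})$ has finite index. This is where the Zariski density of $\varphi$ and the birational invariance of $\pi_1^{\temp}$ (\autoref{lem:birationalinvariancetemp}) enter: blow up to make $\varphi$ a morphism from some $\widetilde{G}$ and argue that a finite cover of $X^{\Ber}$ corresponding to a finite-index subgroup of $\pi_1^{\temp}(X^{\Ber})$ would pull back to a disconnected cover of $\widetilde{G}$, contradicting Zariski density.
\end{enumerate}

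The main obstacle, I expect, lies in step (3): the non-Archimedean analogue of the Shafarevich morphism has not been developed in the generality we would need, and without it there is no direct mechanism guaranteeing that a Zariski dense image controls the \emph{topological} part of the tempered fundamental group (which is the part not captured by $\pi_1^{\alg}$). One auxiliary approach is to reduce to the surface case treated in \autoref{thm:surfaceKinfinity} by slicing: if $\pi_1^{\temp}(X^{\Ber})$ were not virtually abelian, one would try to produce a fibration $X \dashrightarrow Y$ onto a positive-dimensional base whose very general fibres witness the non-abelianity, then contradict $K$-analytic specialness via \autoref{thm:nofibrationtoBrody}. Making this slicing rigorous in the tempered setting, where coverings mix topological and algebraic data, is precisely the hard technical point, and I would expect it to require a tempered Shafarevich theory along the lines of what Lepage developed for curves in \cite{Lepage:TemperedFundamentalGroup}.
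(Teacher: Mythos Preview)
This statement is presented in the paper as a \emph{conjecture}, not a theorem; the paper offers no proof and explicitly frames it (together with \autoref{conjecture:Campana}) as one of the ``non-Archimedean variants of Campana's conjectures'' formulated in the final section on the basis of the evidence provided by Theorems~\ref{thm:nofibrationtoBrody}, \ref{thm:closedabelianspecial}, and \ref{thm:surfaceKinfinity}. There is therefore no proof in the paper to compare your proposal against: you have written a research strategy toward an open problem, and you yourself acknowledge that step~(3) is the hard point you do not know how to carry out.

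That obstacle is genuine and not merely technical. There is no reason to expect that a Zariski dense analytic morphism $\varphi\colon \sU \to X^{\Ber}$ induces a map on tempered fundamental groups with finite-index image. Your proposed argument---that a finite \'etale cover of $X^{\Ber}$ would pull back to a disconnected cover of $\widetilde{G}$, contradicting Zariski density---is incorrect as stated: Zariski density of a morphism is far weaker than $\pi_1$-surjectivity, already in the complex setting (a Zariski dense curve in an abelian surface gives an image of infinite index in $\pi_1$). More fundamentally, note that part~(1) of this conjecture is itself Campana's original abelianity conjecture \cite[Conjecture 7.1]{campana2004orbifolds}, which remains wide open over $\mC$; the available results (e.g.\ \cite{yamanoi2010fundamental}) handle only linear quotients of $\pi_1$ and rely on deep Hodge-theoretic input with no known non-Archimedean substitute. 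So the reduction you propose in your first paragraph, via \autoref{conjecture:Campana}, reduces one open conjecture to another open conjecture rather than to something known.
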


\begin{remark}
The complex analytic part of \autoref{conjecture:Campana}, namely the equivalence of conditions (1) and ($4_{\infty}$), has equivalent incarnation via the Kobayashi pseudo-metric. In \cite[Conjecture 9.2]{campana2004orbifolds}, Campana asked if for a proper variety $X/\mC$, $X$ being special is equivalent to the Kobayashi pseudo-metric $d_X$ vanishing identically on $X^{\an}$. 
For a discussion on the Kobayashi pseudo-metric and its relation to hyperbolicity, we refer the reader to \cite{Kobayashi}. 
Cherry \cite{CherryKoba} offered a non-Archimedean variant of the Kobayashi pseudo-metric, however his notion seems to not correctly capture hyperbolic or special properties of a variety. 
For example, he proves (\textit{loc.~cit.~}Theorem 4.6) that his non-Archimedean Kobayashi pseudo-metric is a genuine metric on an abelian variety. 

In forthcoming work \cite{morrow:NonArchKob}, the first author provides a new definition of a non-Archimedean Kobayashi pseudo-metric, which does seem to correctly capture hyperbolic and special properties of a variety. 
As an illustration of this claim, the author shows that for $K$ an algebraically closed, complete, non-Archimedean valued field of characteristic zero that contains a countable dense subset and $\sX$ a good, connected Berkovich $K$-analytic space, if this new non-Archimedean Kobayashi pseudo-metric $d_{\sX}$ is in fact a metric, then $\sX$ is $K$-analytically Brody hyperbolic in the sense of in the sense of \cite[Definition 2.3]{JVez}. 
With this result and others, it seems natural to conjecture that for such a $K$, a Berkovich $K$-analytic space is $K$-analytically special if and only if this new non-Archimedean Kobayashi pseudo-metric $d_{\sX}$ is identically zero on $\sX$. 
\end{remark}

\appendix

  \bibliography{refs}{}
\bibliographystyle{amsalpha}

 \end{document}